\newtheorem{theorem}{Theorem}[section]
\newtheorem{proposition}[theorem]{Proposition}
\newtheorem{corollary}[theorem]{Corollary}
\newtheorem{lemma}[theorem]{Lemma}
\newtheorem{definition}[theorem]{Definition}
\newtheorem{construction}[theorem]{Construction}
\theoremstyle{definition}
\newtheorem{remark}[theorem]{Remark}
\newcommand{\dashedrightarrow}[1][2pt]{%
  \settowidth{\@tempdima}{$\rightarrow$}\rightarrow
  \makebox[-\@tempdima]{\hskip-1.5ex\color{white}\rule[0.5ex]{#1}{1pt}}
  \phantom{\rightarrow}
}
\def\Spec{\operatorname{Spec}}
\def\Proj{\operatorname{Proj}}
\def\CC{\mathbb{C}}
\def\QQ{\mathbb{Q}}
\def\RR{\mathbb{R}}
\def\NN{\mathbb{N}}
\def\ZZ{\mathbb{Z}}
\def\PP{\mathbb{P}}
\def\dim{\operatorname{dim}}
\def\min{\operatorname{min}}
\def\cO{\mathcal{O}}
\def\Proj{\operatorname{Proj}}
\def\del{\partial}
\def\delb{\bar{\partial}}
\definecolor{darkgreen}{RGB}{0,153,0}
\definecolor{darkred}{RGB}{204,0,0}
\definecolor{darkblue}{RGB}{0,51,204}
\definecolor{red}{RGB}{242,43,29}
\begin{document}

\title{Liouville domains from Okounkov bodies}

\author{Marco Castronovo}
\address{Department of Mathematics, Columbia University}
\email{marco.castronovo@columbia.edu}

\begin{abstract}

Given a strictly concave rational PL function $\phi$ on a complete $n$-dimensional fan $\Sigma$,
we construct an exact symplectic structure of finite volume on $(\CC^\times)^n$ and a
family of functions $H_{\phi,\epsilon}$ called polyhedral Hamiltonians. We prove
that for each $\epsilon$ the one-periodic orbits of $H_{\phi,\epsilon}$ come in families
corresponding to finitely many primitive lattice points of $\Sigma$ and determine
their topology. When $\phi$ is negative on the rays of $\Sigma$, we show that the level
sets of polyhedral Hamiltonians are hypersurfaces of contact type.
As a byproduct, this construction provides a dynamical
model for the singularities of toric varieties obtained as degenerations
of Fano manifolds in any dimension via Okounkov bodies.

\end{abstract}

\maketitle
\thispagestyle{empty}

\section{Introduction}\label{SecIntro}

\subsection{Convexity in algebraic geometry}

To an ample divisor $D$ on a smooth $n$-dimensional complex projective variety
$X$ one can associate closed convex sets $\Delta_\nu(X,D)\subset\RR^n$ called
Okounkov bodies, one for each rank $n$ valuation $v:\CC(X)^\times \to \ZZ^n$ on the function
field of $X$. Roughly speaking $\Delta_\nu(X,D)$ measures how the sections $\Gamma(X,\cO(mD))$
grow as $m\to\infty$ from the point of view of $\nu$. When $X$ carries an action of a
reductive group $G$, these sets were used by Okounkov \cite{Ok} to
study multiplicities of irreducible representations of $G$. Even in the absence of symmetries,
one can use Okounkov bodies as developed by Lazarsfeld-Musta\c{t}\u{a} \cite{LM}
and Kaveh-Khovanskii \cite{KK} to construct degenerations of $(X,D)$ to polarized
toric varieties; see Anderson \cite{An}. In this sense Okounkov bodies can be
thought of as analogues of moment polytopes for varieties that do not carry natural
torus actions.

\subsection{Convexity in symplectic topology}

Endowing $\RR^{2n}$ with the standard symplectic structure $\omega_{std}$, one can ask what open subsets
$U\subset \RR^{2n}$ are symplectomorphic to each other. If $U$ is the interior of a
compact submanifold with boundary, a useful
invariant is the characteristic distribution $\operatorname{ker}({\omega_{std}}_{|\partial U})$,
and great efforts have been made to understand its general properties. Weinstein \cite{We78}
realized that convexity of $U$ implies the existence of closed integral curves (see also Krantz \cite[Propositions 3.1.6, 3.1.7]{Kr}
for the equivalence of geometric and function-theoretic convexity). This result was
generalized to star-shaped domains by Rabinowitz \cite{Ra}, and lead
to the notion of contact type hypersurface as appropriate generalization of
boundary of a convex domain in symplectic topology; see Weinstein \cite{We79}, Ekeland-Hofer
\cite{EH} and Eliashberg-Gromov \cite{EG}.

\subsection{Goal}

In this article, we establish a direct connection between convexity in algebraic
geometry and symplectic topology, by proving that Okounkov bodies $\Delta_v(X,D)$
give rise to families of contact type hypersurfaces in $X$, when it is
endowed with the symplectic structure $\omega_D$ induced by the ample divisor $D$. We also show
that the dynamics of these contact hypersurfaces is encoded to a great extent by the
combinatorics of the Okounkov body.

\subsection{Description of the constructions}

The results of this article rely on the following two constructions.
Suppose that the Okounkov body $P=\Delta_\nu(X,D)$ is a rational convex polytope;
this is true in many cases, for example when the valuation $\nu$
comes from the tropicalization of $X$ as in Kaveh-Manon \cite{KM}.
Denote $r\in\NN^+$ the minimum integer such that the
vertices of $rP$ are integral. Call $\Sigma$ the normal fan of $P$: its
rays $\rho\in\Sigma(1)$ are generated by the primitive inward-pointing normal vectors
$u_\rho\in\ZZ^n$ to the facets of $P$, and a set of rays spans a higher-dimensional
cone in $\Sigma$ if the intersection of the corresponding
facets is a lower-dimensional face of $P$. One can write
$$rP = P_{r\phi} = \{ \; m\in \RR^n \; : \; \langle m,u_\rho\rangle \geq r\phi(u_\rho) \; \textrm{for all} \; \rho\in\Sigma(1) \; \} \quad ,$$
where $\phi:\RR^n\to \RR$ is a strictly concave PL function whose domains of linearity are the
cones of $\Sigma$ and such that $\phi(\ZZ^n)\subseteq \QQ$; see e.g. \cite[Proposition 6.1.10, Theorem 6.1.14]{CLS} (where
concavity is called convexity).

\begin{construction}
Consider the complex torus $(\CC^\times)^n$ with coordinates $z_i$ for $1\leq i\leq n$,
and for any lattice point $m=(m^{(1)},\ldots, m^{(n)})\in rP\cap\ZZ^n$ call
$\chi^m(z_1,\ldots ,z_n)=z_1^{m^{(1)}}\cdots z_d^{m^{(n)}}$ the corresponding
character. The one-form
$$\theta_{r\phi}= -\frac{i}{2}\sum_{k=1}^d\frac{\sum_{m\in P_{r\phi}\cap\ZZ^d}m^{(k)}|\chi^m|^2}{\sum_{m\in P_{r\phi}\cap\ZZ^d}|\chi^m|^2}
	\left(\frac{dz_k}{z_k}-\frac{d\overline{z}_k}{\overline{z}_k} \right)$$
defines an exact symplectic structure $\omega_{r\phi}=d\theta_{r\phi}$ of finite volume on $(\CC^\times)^n$.
\end{construction}

The formula above has a simple geometric interpretation. Thinking the complex torus
as the maximal orbit of the toric variety $(\CC^\times)^n\subset X(\Sigma)$, the symplectic structure
$\omega_{r\phi}$ is induced by the ample Cartier divisor $D_{r\phi}=-\sum_{\rho\in\Sigma(1)}r\phi(u_\rho)D_\rho$
on it, where $D_\rho$ is the Zariski closure of the torus orbit corresponding
to the ray $\rho\in\Sigma(1)$; see Section \ref{SecPotential} for more details.
We verify in Proposition \ref{PropositionHamiltonianAction} that the action
of the real torus $(S^1)^n\subset (\CC^\times)^n$ is Hamiltonian with respect to
$\omega_{r\phi}$, and that the function $\mu_{r\phi}:(\CC^\times)^n\to\RR^n$ given by

$$\mu_{r\phi}=\frac{1}{\sum_{m\in P_{r\phi}\cap\ZZ^n}|\chi^m|^2}
\sum_{m\in P_{r\phi}\cap\ZZ^n}|\chi^m|^2m $$

is a moment map in the sense of symplectic topology; see e.g. \cite[Section 4.2]{Fu} for a
discussion of the map $\mu_{r\phi}$ from the point of view of toric geometry. Note that versions of this
statement have appeared in the literature with various smoothness
assumptions on $X(\Sigma)$, which do not hold in our case since toric varieties
of associated Okounkov bodies can be very singular.

\begin{figure}[H]
  \centering
    \begin{subfigure}[b]{0.2\textwidth}
        \includegraphics[width=\textwidth]{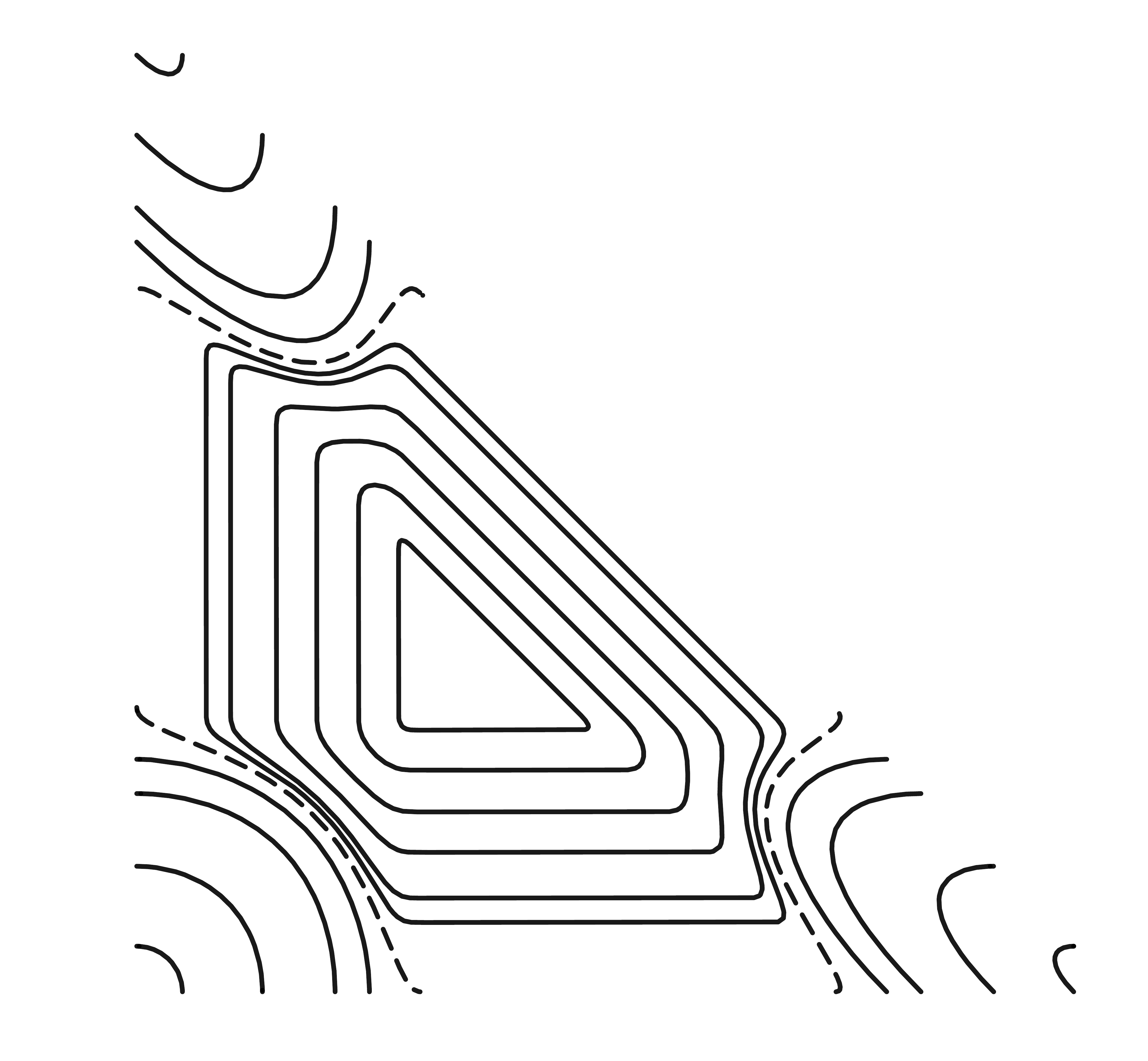}
    \end{subfigure}
    \begin{subfigure}[b]{0.2\textwidth}
        \includegraphics[width=\textwidth]{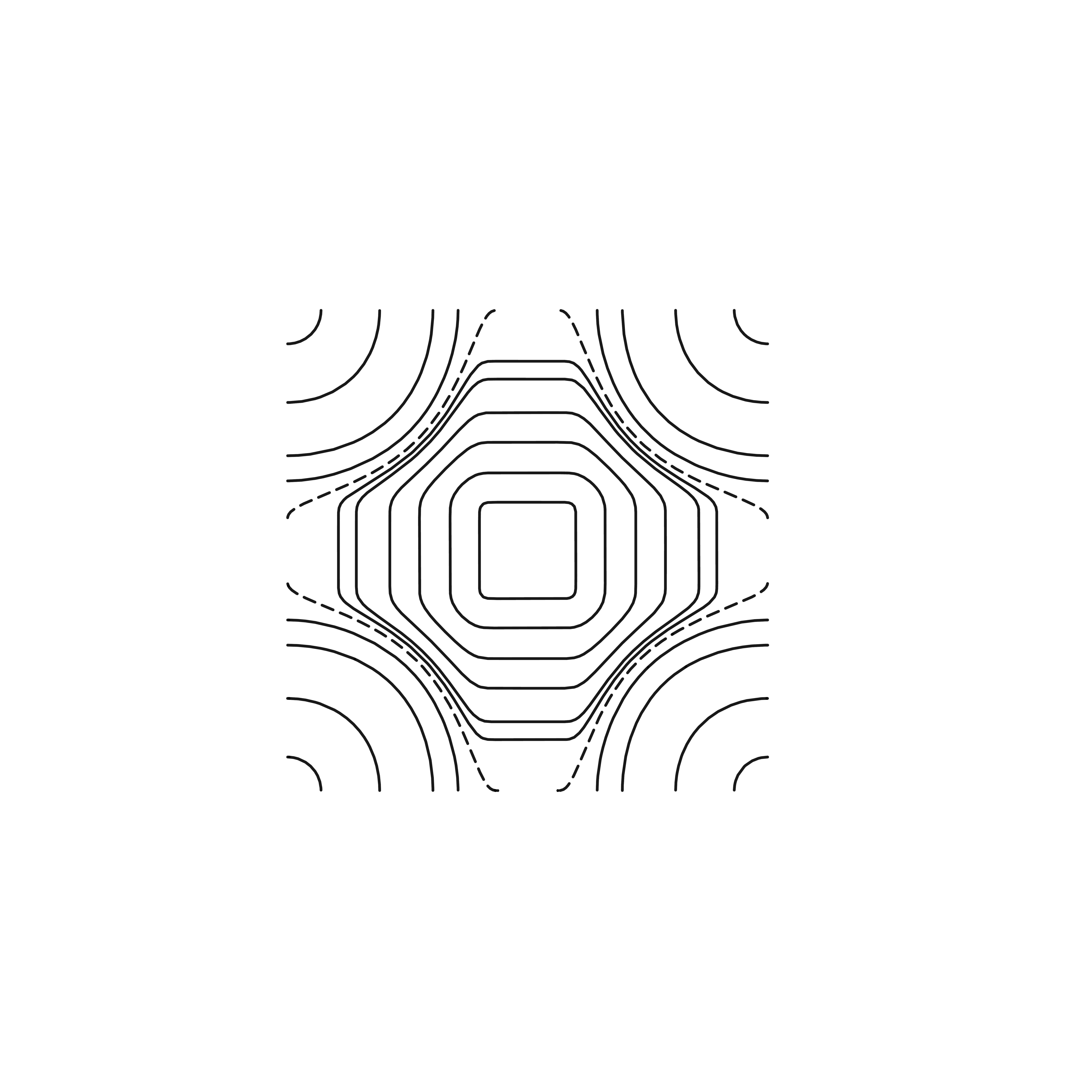}
    \end{subfigure}
    \begin{subfigure}[b]{0.2\textwidth}
        \includegraphics[width=\textwidth]{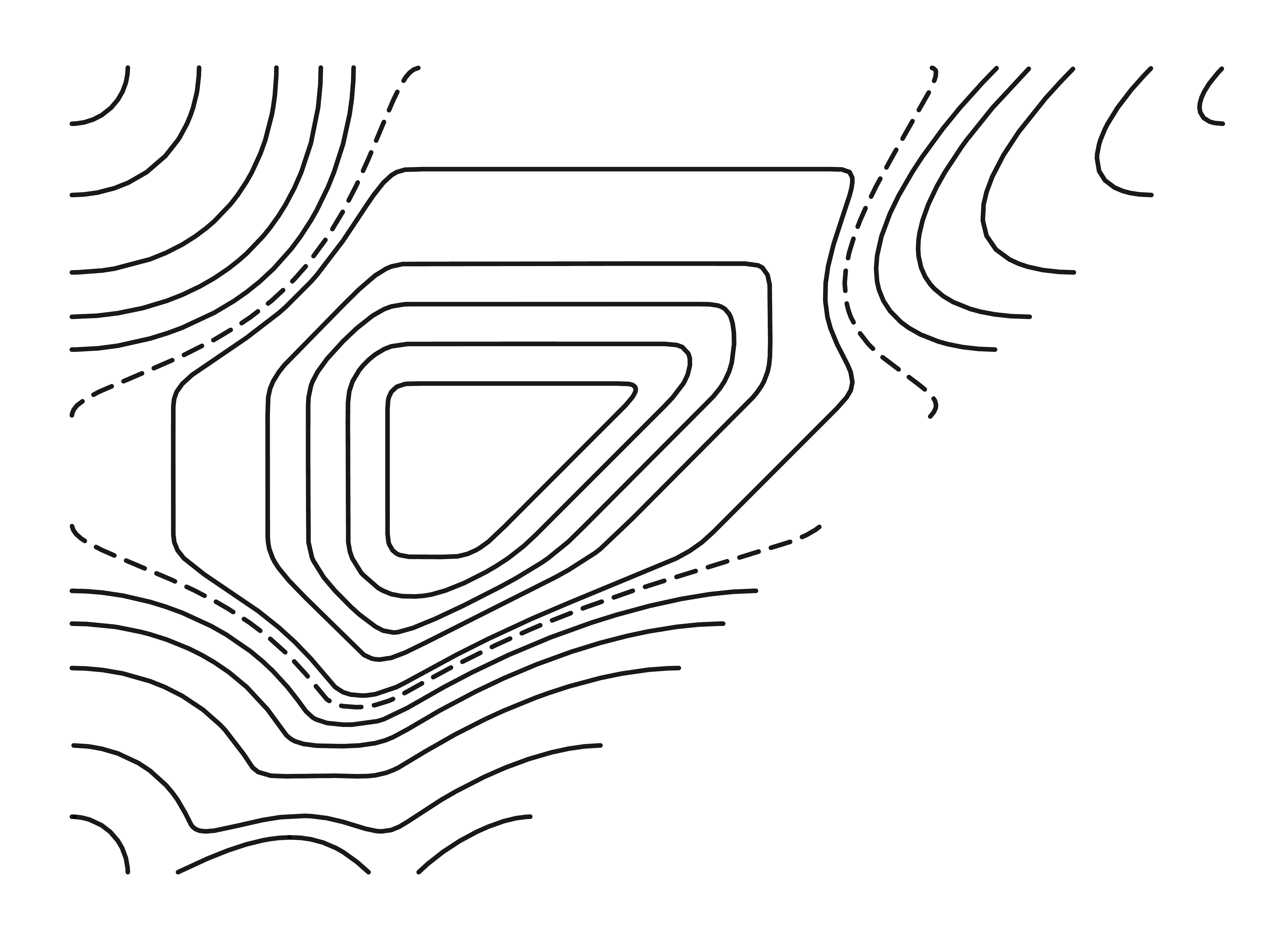}
    \end{subfigure}
    \begin{subfigure}[b]{0.2\textwidth}
        \includegraphics[width=\textwidth]{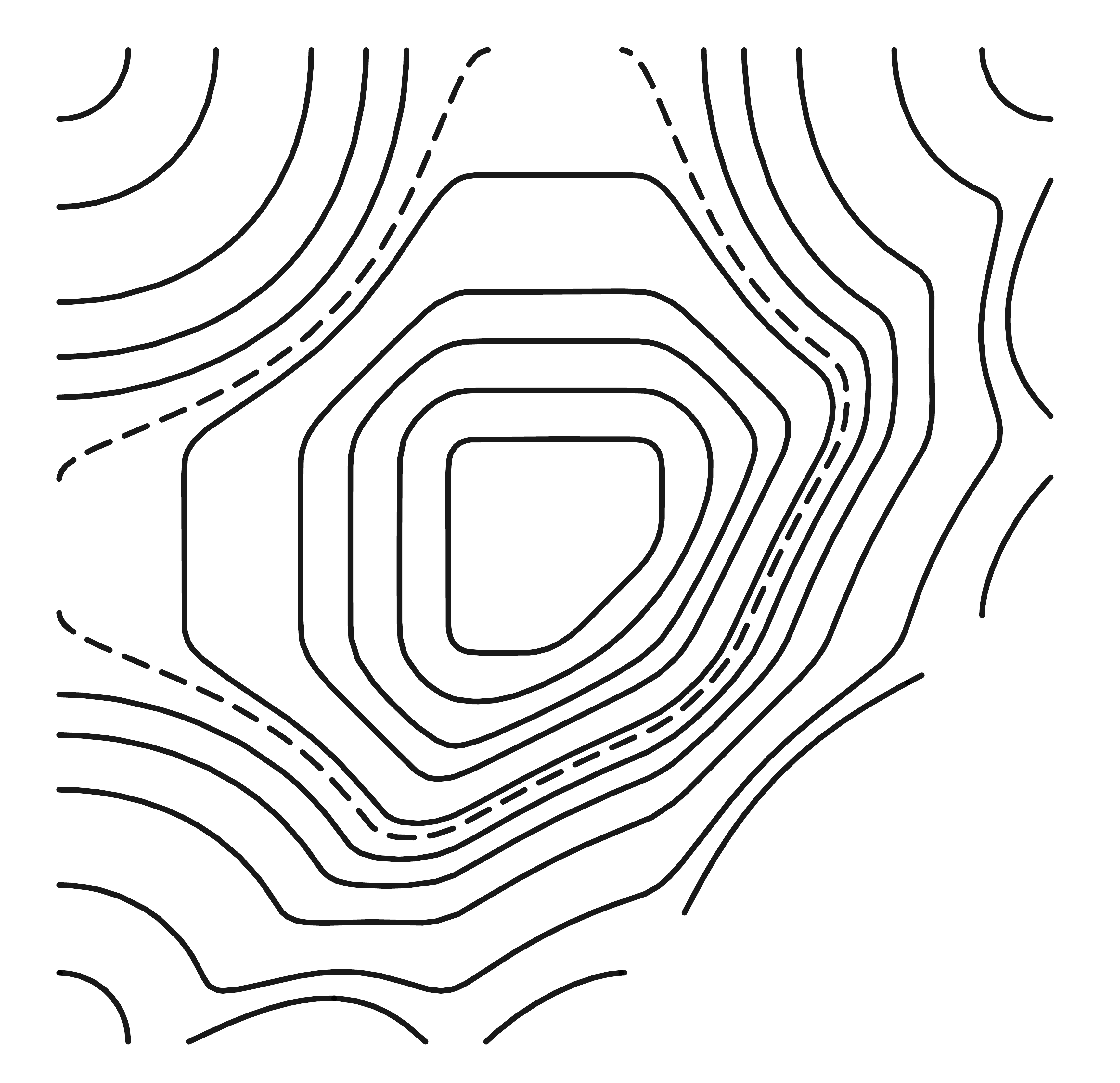}
    \end{subfigure}
    
    \begin{subfigure}[b]{0.2\textwidth}
        \includegraphics[width=\textwidth]{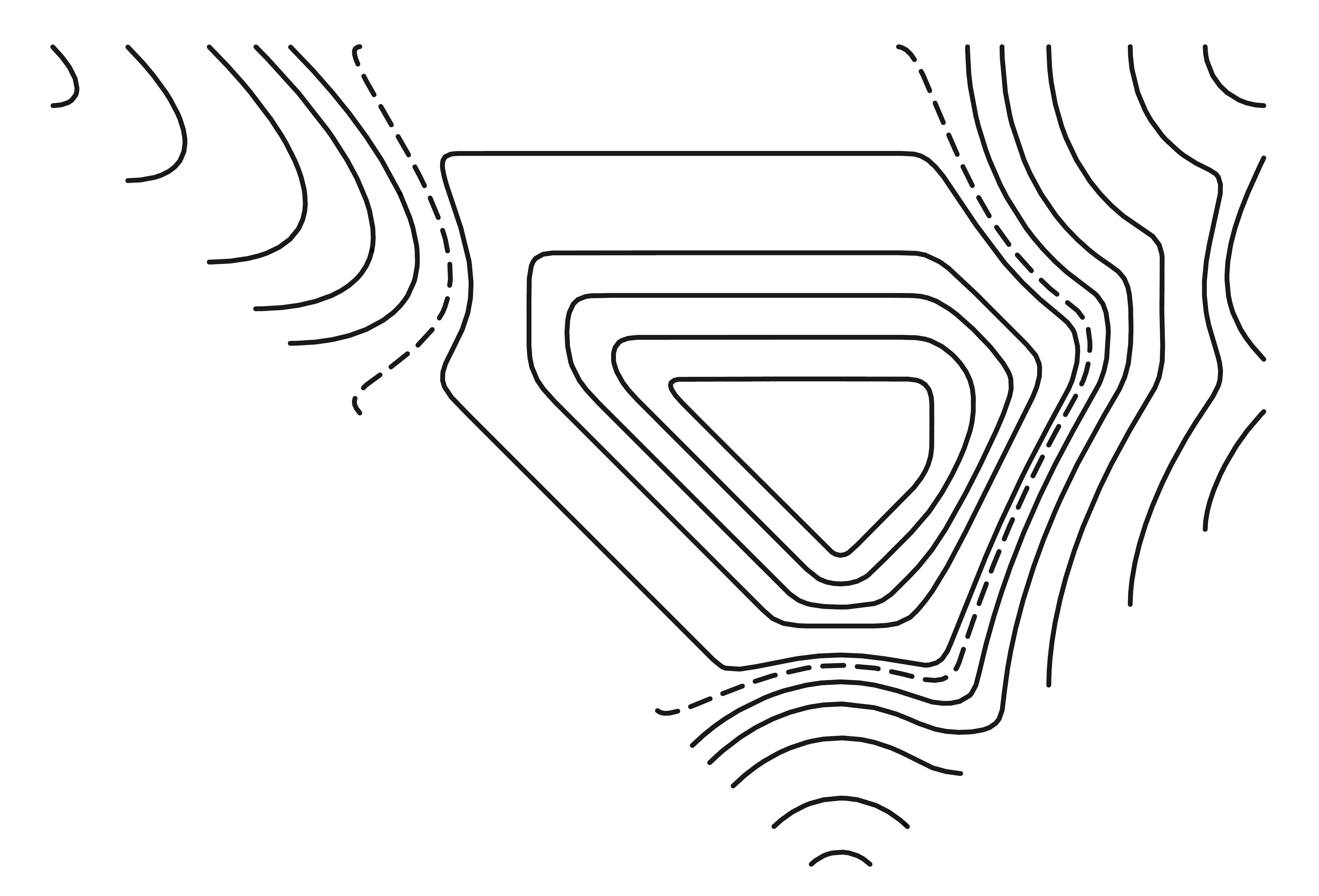}
    \end{subfigure}
    \begin{subfigure}[b]{0.2\textwidth}
        \includegraphics[width=\textwidth]{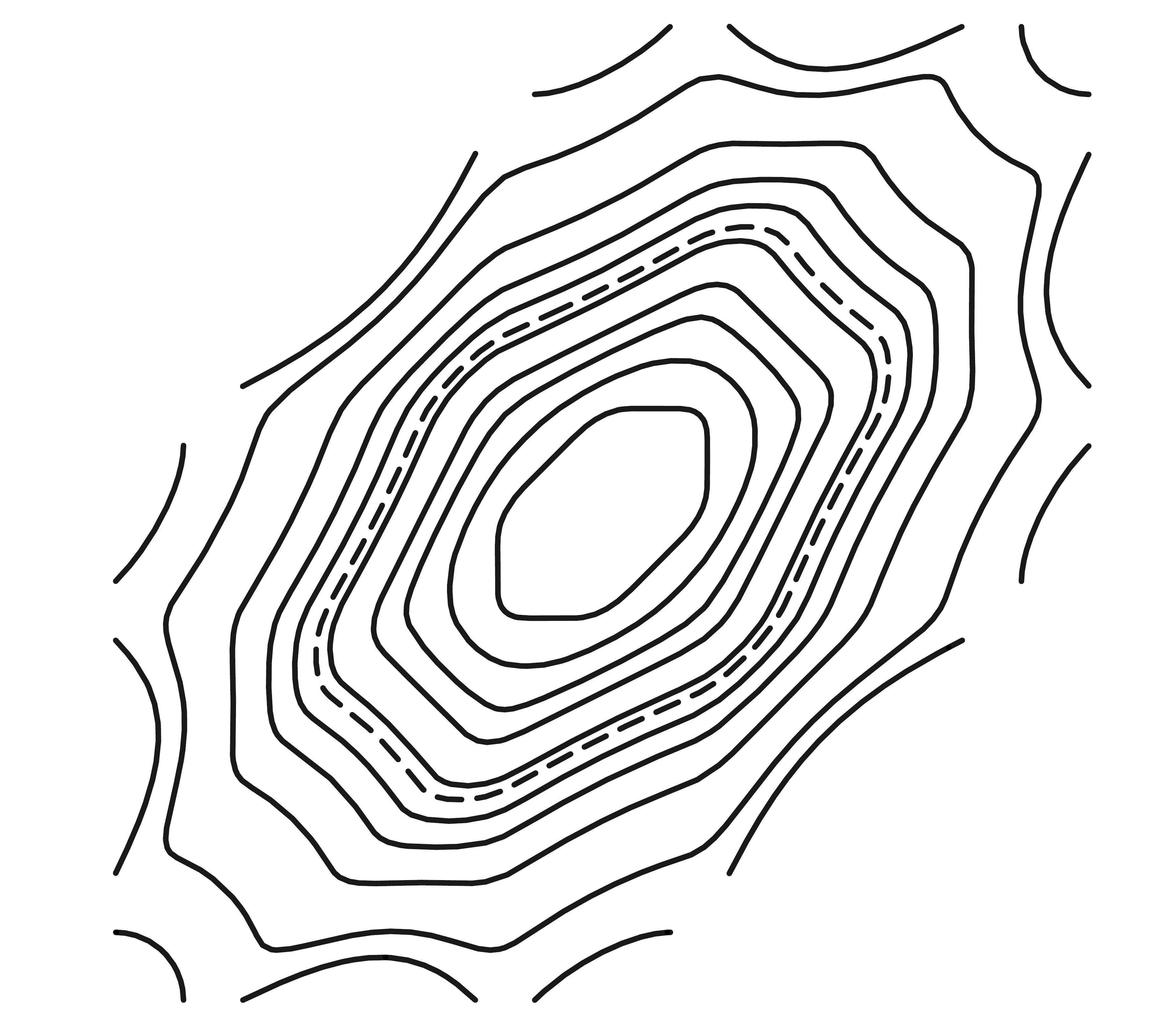}
    \end{subfigure}
    \begin{subfigure}[b]{0.2\textwidth}
        \includegraphics[width=\textwidth]{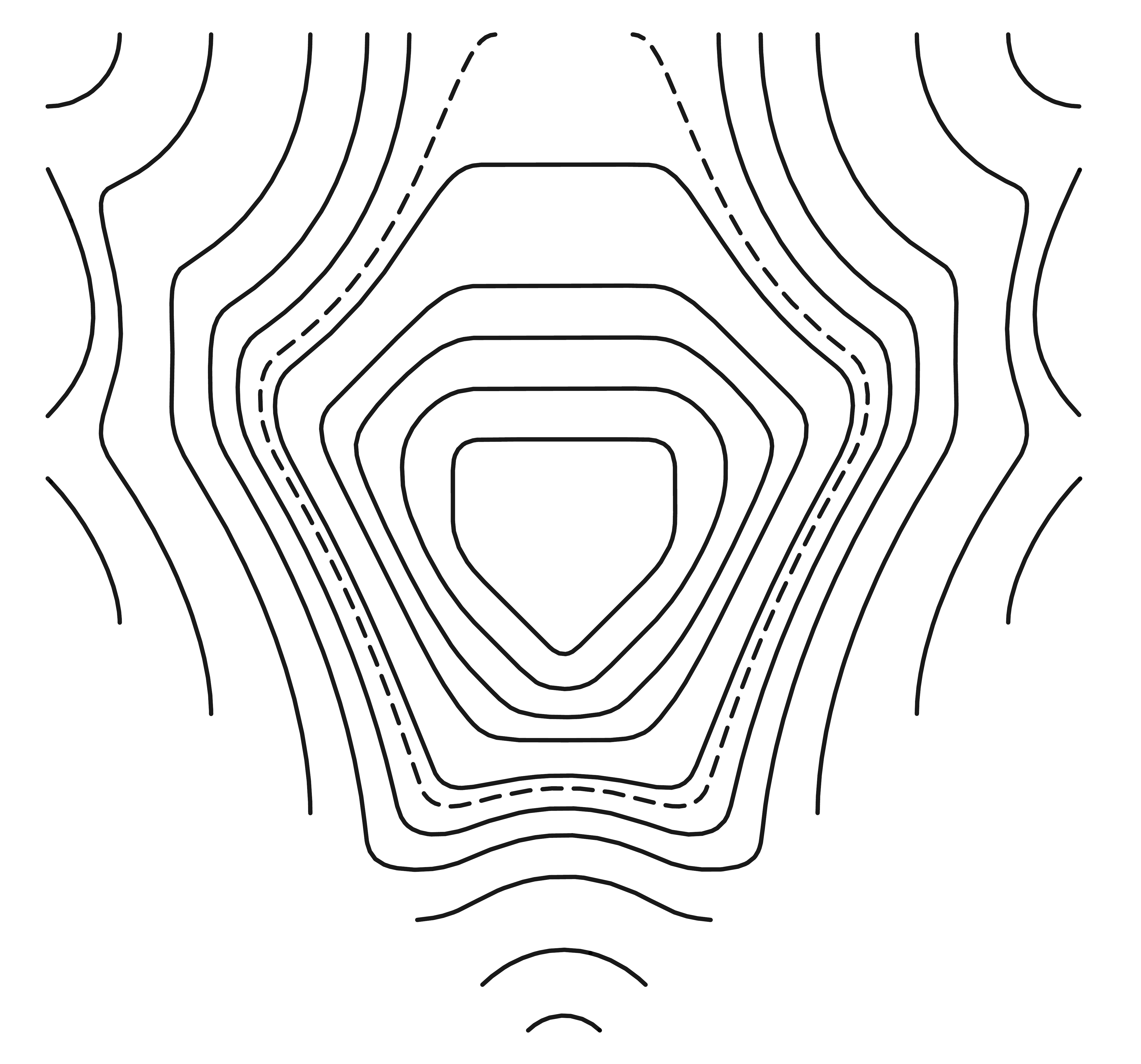}
    \end{subfigure}
    \begin{subfigure}[b]{0.2\textwidth}
        \includegraphics[width=\textwidth]{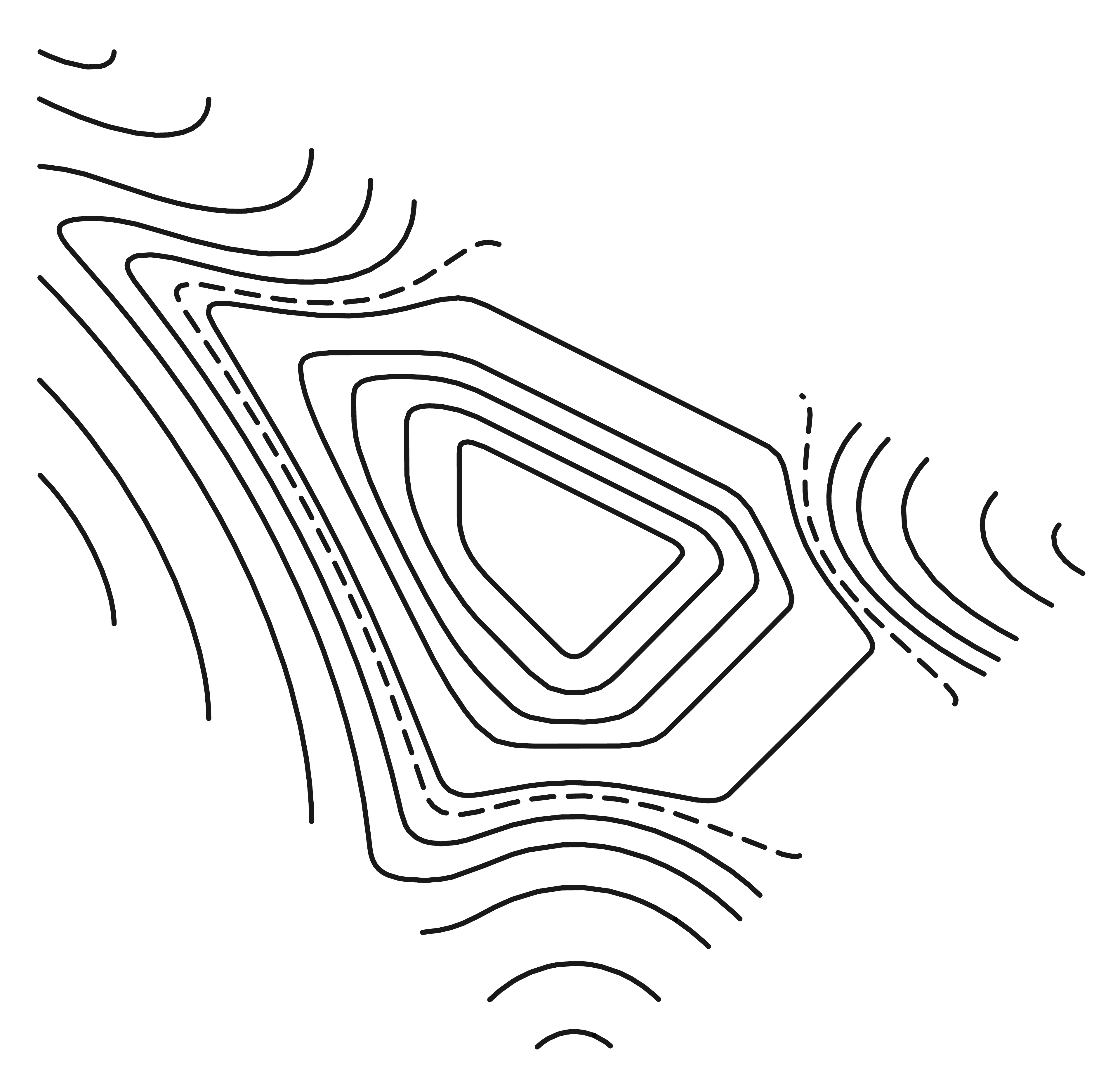}
    \end{subfigure}

    \caption{Level sets of the smoothing function $h_{r\phi,\epsilon}$ for some two-dimensional
    polytopes $P_{r\phi}$. The level set with value one is dashed.}
    \label{FigLevelSets}
\end{figure}

The second construction is a smoothing procedure for convex polytopes of arbitrary
dimension.

\begin{construction}
For any $\epsilon = (\epsilon_\rho )_{\rho\in\Sigma(1)}\in\RR^{\Sigma(1)}_{> 0}$ consider
the function $h_{r\phi,\epsilon}:\operatorname{int}(P_{r\phi})\to \RR $ given by
$$h_{r\phi,\epsilon}(m)=\sum_{\rho\in\Sigma(1)}q_{\epsilon_\rho}(r_\rho(m)) \quad ,$$
where $r_\rho(m)=\langle m,u_\rho\rangle -r\phi(u_\rho)$ measures the distance from the
facet of $P_{r\phi}$ corresponding to $\rho\in\Sigma(1)$ and $q_{\epsilon_\rho}$ is a bump
function. The level sets $h^{-1}_{r\phi,\epsilon}(\delta)$ with $\delta\in (0,1)$ are smooth hypersurfaces
homeomorphic to $S^{n-1}$.
\end{construction}

The level sets of the smoothing function $h_{r\phi}$ can be thought of as smooth approximations
of the polyhedral boundary $\partial P_{r\phi}$ depending on $\epsilon$; see Figure
\ref{FigLevelSets} and Proposition \ref{PropLevelSpheres}. Composing with the moment map one gets a family of functions $H_{r\phi,\epsilon}=h_{r\phi,\epsilon}\circ\mu_{r\phi}$
on $(\CC^\times)^n$ that we call polyhedral Hamiltonians. The distance functions also lift
to functions $R_{\rho}=r_\rho\circ\mu_{r\phi}$ on $(\CC^\times)^n$, which intuitively
measure the distance from the component $D_\rho$ of the torus-invariant divisor $D_{r\phi}$
in the compactification $(\CC^\times)^n\subset X(\Sigma)$. The Hamiltonian vector field
$X_{R_\rho}$ corresponding to $R_\rho$ coincides with the infinitesimal action $X_{u_\rho}$
of the one-parameter subgroup $\lambda_{u_{\rho}}:\CC^\times \to (\CC^\times)^n$ given
by $\lambda_{u_{\rho}}(t)=(t^{u_\rho^{(1)}},\ldots ,t^{u_\rho^{(n)}})$. This smoothing procedure mimics
McLean's construction of the link of a simple normal crossing divisor \cite{ML16}. The
relevant divisor in this case is $D_{r\phi}$ on $X(\Sigma)$, which is however not simple normal
crossing due to the singularities of $X(\Sigma)$. We bypass
this issue by exploiting the torus symmetry, and use the moment map to
reduce the construction to the combinatorics of $P_{r\phi}$ rather than using
regularizing tubular neighborhoods for the divisor $D_{r\phi}$. Note that in principle one
could use resolutions of singularities to make $D_{r\phi}$ simple normal crossing in a smooth
birational model for $X(\Sigma)$, where McLean's techniques can be applied. However,
this would make the results of this article less explicit, and further introduce subtle questions
regarding the dependence on the choice of resolution.

\subsection{Results: contact type hypersurfaces}

For any $\epsilon\in\RR_{>0}^{\Sigma(1)}$ and $\delta\in (0,\infty)$ define
$$W_{\epsilon,\delta}(r\phi) = \{ \; z\in (\CC^\times)^n \; : \;
H_{\epsilon, r\phi}(z)\leq \delta \; \} \quad ;$$
this is a submanifold with boundary of $(\CC^\times)^n$ with a Lagrangian torus fibration
given by the moment map $\mu_{r\phi}$.

\begin{theorem} (Theorem \ref{ThmContactType})
If $\phi(u_\rho)<0$ for all $\rho\in\Sigma(1)$, then $\theta_{r\phi}$ restricts to a contact form on the hypersurface
$\partial W_{\epsilon,\delta}(r\phi)=H_{\epsilon, r\phi}^{-1}(\delta)\subset (\CC^\times)^n$
for all $\delta\in (0,\infty)$ and $\epsilon\in \RR_{>0}^{\Sigma(1)}$ such that $\epsilon_\rho < -r\phi(u_\rho)$ for all $\rho\in\Sigma(1)$ .
\end{theorem}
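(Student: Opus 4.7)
The plan is to exhibit a Liouville vector field $Z$ for $\theta_{r\phi}$ and show that it is transverse to $\partial W_{\epsilon,\delta}(r\phi) = H_{\epsilon, r\phi}^{-1}(\delta)$. Recall that on an exact symplectic manifold $(M, d\theta)$, the primitive $\theta$ restricts to a contact form on an oriented hypersurface $\Sigma \subset M$ if and only if the Liouville vector field $Z$ defined by $\iota_Z d\theta = \theta$ is transverse to $\Sigma$. Thus the theorem reduces to showing $dH_{\epsilon, r\phi}(Z) \neq 0$ on each level set $H_{\epsilon,r\phi}^{-1}(\delta)$, $\delta > 0$.

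First I identify $Z$. Using logarithmic-angular coordinates $z_k = e^{\rho_k + i\theta_k}$, a direct computation gives $\theta_{r\phi} = \sum_k \mu^{(k)}_{r\phi}\, d\theta_k$ and $\omega_{r\phi} = \sum_{k,j} J_{kj}\, d\rho_j \wedge d\theta_k$, where $J_{kj} = \partial_{\rho_j}\mu^{(k)}_{r\phi}$ is the Hessian of $\tfrac{1}{2}\log\sum_{m\in P_{r\phi}\cap\ZZ^n}|\chi^m|^2$ and is therefore symmetric positive definite (as the Hessian of a strictly convex log-sum-of-exponentials). By $(S^1)^n$-invariance of $\mu_{r\phi}$, I look for $Z=\sum_j a_j(\rho)\partial_{\rho_j}$ with vanishing angular part; the equation $\iota_Z \omega_{r\phi}=\theta_{r\phi}$ then reduces to the linear system $Ja = \mu_{r\phi}$, which is uniquely solvable. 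Equivalently, in the action-angle coordinates on $\operatorname{int}(P_{r\phi})\times T^n$ provided by the moment map, $Z$ is the radial Euler field $\sum_k m^{(k)}\partial_{m^{(k)}}$. The crucial consequence is the pushforward identity $d\mu_{r\phi}(Z) = \mu_{r\phi}$.

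Combining this with the chain rule for $H_{\epsilon,r\phi}=h_{r\phi,\epsilon}\circ \mu_{r\phi}$ and the identity $\langle u_\rho, m\rangle = r_\rho(m)+r\phi(u_\rho)$ yields
\begin{equation*}
dH_{\epsilon,r\phi}(Z) \;=\; \sum_{\rho\in\Sigma(1)} q'_{\epsilon_\rho}\!\big(r_\rho(\mu_{r\phi})\big)\,\big[r_\rho(\mu_{r\phi}) + r\phi(u_\rho)\big] .
\end{equation*}
Each bump $q_{\epsilon_\rho}$ is strictly decreasing on its support $(0,\epsilon_\rho)$, so a summand contributes only when $0<r_\rho(\mu_{r\phi})<\epsilon_\rho$, in which case $q'_{\epsilon_\rho}(r_\rho)<0$; the hypothesis $\epsilon_\rho<-r\phi(u_\rho)$ then forces $r_\rho(\mu_{r\phi})+r\phi(u_\rho)<0$ as well. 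Every contributing summand is therefore a product of two negative factors and hence strictly positive. On $\{H_{\epsilon,r\phi}=\delta\}$ with $\delta>0$, the equation $h_{r\phi,\epsilon}(\mu_{r\phi})=\delta$ forces at least one summand to be nonzero, so $dH_{\epsilon,r\phi}(Z)>0$ pointwise on the hypersurface. Hence $Z$ is transverse to $\partial W_{\epsilon,\delta}(r\phi)$ and $\theta_{r\phi}$ restricts to a contact form.

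The main obstacle is the first step: verifying the pushforward identity $d\mu_{r\phi}(Z)=\mu_{r\phi}$. This rests on positive definiteness of $J$, which in turn uses that $P_{r\phi}\cap\ZZ^n$ spans $\RR^n$ affinely (guaranteed since $r$ is chosen so $rP$ has integral vertices and $D_{r\phi}$ is ample). Once this identity is in place, the contact condition becomes a transparent combinatorial inequality in $P_{r\phi}$, and the assumption $\epsilon_\rho<-r\phi(u_\rho)$ is exactly what confines the smoothed level sets to the region where the Liouville flow points outward.
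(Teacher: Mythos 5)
Your proof is correct, and it takes a genuinely different route from the paper. The paper reduces the transversality of the Liouville vector field to the single inequality $\theta_{r\phi}(X_{u_\rho})<0$ on $U_\rho^\epsilon$ (Lemma \ref{LemmaCriterionForTransversality}), and then establishes the key identity $\theta_{r\phi}(X_{u_\rho})=\langle\mu_{r\phi},u_\rho\rangle$ in two steps: an abstract \emph{distortion formula} $\theta_{r\phi}(X_{u_\rho})=\langle\mu_{r\phi},u_\rho\rangle+C_\rho$ coming from the strictly exact $(S^1)^n$-action (Lemma \ref{LemmaDistortionFormula}), followed by the calculation $C_\rho=0$ via the wrapping--averaging formula (Proposition \ref{PropWrappingAverageFormula} and Lemma \ref{LemmaDistortionConstant}). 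You instead pass to logarithmic-angular coordinates $z_k=e^{\rho_k+i\theta_k}$, where the formula in Section \ref{SecPotential} becomes $\theta_{r\phi}=\sum_k\mu^{(k)}_{r\phi}\,d\theta_k$, so that $\omega_{r\phi}=\sum_{j,k}J_{kj}\,d\rho_j\wedge d\theta_k$ with $J$ the Hessian of the K\"{a}hler potential, and the Liouville vector field $Z$ satisfies $d\mu_{r\phi}(Z)=\mu_{r\phi}$. This pushforward identity is equivalent to the paper's $\theta_{r\phi}(X_{u_\rho})=\langle\mu_{r\phi},u_\rho\rangle$ (indeed, from your display $\theta_{r\phi}=\sum_k\mu^{(k)}_{r\phi}d\theta_k$ and $X_{u_\rho}=\sum_k u_\rho^{(k)}\partial_{\theta_k}$ one reads off the identity immediately, with no need for the distortion constant), and from there the sign analysis of $dH_{\epsilon,r\phi}(Z)=\sum_\rho q'_{\epsilon_\rho}(r_\rho(\mu_{r\phi}))\,[r_\rho(\mu_{r\phi})+r\phi(u_\rho)]$ matches the paper's final step term for term. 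Your route is more elementary and self-contained: it trades the invariance-theoretic argument plus the wrapping integral for a direct Hessian/coordinate computation. What the paper's longer route buys is that the distortion formula applies to any $(S^1)^n$-invariant primitive without choosing coordinates, and the wrapping--averaging formula does extra work elsewhere --- it yields Corollary \ref{CorInfinitesimalWrapping} on infinitesimal wrapping numbers recovering the divisor coefficients and Corollary \ref{CorExactTorus} on exactness of the central torus --- so the machinery is not wasted even if it is not strictly needed for Theorem \ref{ThmContactType} itself. One small point worth stating explicitly in your write-up: $\mu_{r\phi}$ has image in $\operatorname{int}(P_{r\phi})$, so $r_\rho(\mu_{r\phi})>0$ strictly, which rules out the degenerate case $r_\rho=0$ where $q_{\epsilon_\rho}>0$ but $q'_{\epsilon_\rho}=0$; without this observation the conclusion ``at least one summand is nonzero'' would need an extra argument.
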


When $\delta\in (0,1)$ the boundary $\partial W_{\epsilon,\delta}(r\phi)=
H_{\epsilon, r\phi}^{-1}(\delta)$ is homeomorphic to $S^{n-1}\times(S^1)^n$,
and $W_{\epsilon,\delta}(r\phi)$ is a Liouville domain; for $\delta \geq 1$
the boundary can be non-compact. The construction of these domains from
the Okounkov body $P_{r\phi}=\Delta_\nu(X,D)$ is carried out in
a complex torus, but they also have symplectic embeddings in the original projective
manifold $X$, once endowed with the symplectic structure $\omega_D$ induced
by the ample divisor $D$. This holds because, by a result of Harada-Kaveh \cite{HK},
the projective manifold $X$ contains a complex analytic open set symplectomorphic
to $((\CC^\times)^n,\omega_{r\phi})$. The assumption that $\phi(u_\rho)<0$ for all $\rho\in\Sigma(1)$
is equivalent to saying that the Cartier divisor $D_{r\phi}=-\sum_{\rho\in\Sigma(1)}r\phi(u_\rho)D_\rho$
on the toric compactification $X(\Sigma)$ is strictly effective. The proof of the theorem
reduces to
establishing the formula $\theta_{r\phi}(X_{u_\rho})=\langle \mu_{r\phi}, u_\rho\rangle$,
which is done in two steps. First one proves a general distortion formula of the type
$\theta_{r\phi}(X_{u_\rho})=\langle \mu_{r\phi}, u_\rho\rangle + C_\rho$ for some error
term $C_\rho\in\RR$ depending only on $\rho\in\Sigma(1)$ (Lemma \ref{LemmaDistortionFormula});
this holds thanks to the fact that the action of $(S^1)^n\subset (\CC^\times)^n$ is not
only Hamiltonian, but also strictly exact (Proposition \ref{PropositionHamiltonianAction}).
Then one verifies that $C_\rho=0$ for all $\rho\in\Sigma(1)$ (Lemma \ref{LemmaDistortionConstant});
this calculation relies on what we call the wrapping-averaging formula (Proposition \ref{PropWrappingAverageFormula}):
$$\int_{|t|=a}\lambda_{u_\rho}^*\theta_{r\phi} = 2\pi \langle \overline{m}_{\rho,r\phi}(a),u_\rho\rangle \quad .$$
This formula relates the wrapping numbers of one-parameter subgroups $\lambda_{u_{\rho}}:\CC^\times\to (\CC^\times)^n$
as measured by $\theta_{r\phi}$ to certain weighted averages of lattice points in the polytope $P_{r\phi}$:
$$\overline{m}_{\rho,r\phi}(a)=
\frac{\sum_{m\in P_{r\phi}\cap\ZZ^d}a^{2\langle m,u_\rho\rangle}m}{\sum_{m\in P_{r\phi}\cap\ZZ^d}a^{2\langle m,u_\rho\rangle}} \quad .$$
Besides being used in the proof of the theorem above, this formula has two more consequences.
One is that it can be used to show (Corollary \ref{CorInfinitesimalWrapping}) that the infinitesimal wrapping numbers
as $a\to 0$ recover the coefficients of the Cartier divisor $D_{r\phi}$; this is expected
from the simple normal crossing case, but the familiar techniques do not apply
here due to the singularities of $X(\Sigma)$. A second consequence is that the Lagrangian
torus given by the moment fiber $\mu_{r\phi}^{-1}(0)\subset W_{\epsilon,\delta}(r\phi)$ is
exact (Corollary \ref{CorExactTorus}).

\subsection{Results: families of periodic orbits}

Since the level sets $H_{r\phi,\epsilon}^{-1}(\delta)=\partial W_{\epsilon,\delta}(r\phi)\subset (\CC^\times)^n$ are
hypersurfaces of contact type by Theorem \ref{ThmContactType}, one expects the vector field $X_{H_{r\phi,\epsilon}}$
to have periodic orbits on them; see the initial discussion on symplectic convexity. In fact, the torus symmetry allows to produce large families
of such orbits. We show that these can be encoded by the lattice points of
the normal fan $\Sigma$ of the polytope $P_{r\phi}$ in the following sense.
If $\sigma\in\Sigma$ is a cone and $v\in\operatorname{int}(\sigma)\cap\ZZ^d$
is a lattice point in its relative interior, one can write
$$v = \sum_{\rho\in\sigma(1)}d_\rho u_\rho \quad \textrm{for some} \; d=(d_\rho)_{\rho\in\sigma(1)}\in\QQ_{>0}^{\sigma(1)} \quad ;$$
call $c_\sigma(d)$ the right hand side of this equation.
Here $\sigma(1)$ is the set of rays of the cone $\sigma$ and $u_\rho$ is the
primitive generator of the ray $\rho\in\sigma(1)$. Note that even though the vectors
$u_\rho$ and $v$ are integral, one might have $d_\rho\notin \ZZ_{>0}$; moreover,
this expression as linear combination is in general not unique. This happens because
we make no assumption of smoothness on $X(\Sigma)$, hence the cones $\sigma\in\Sigma$
can be non-smooth or even non-simplicial. 
We show that for each $d$ such that $c_\sigma(d)\in\operatorname{int}(\sigma)\cap\ZZ^n$ one
has a family of periodic orbits
$B^\epsilon_\sigma(d)\subset (\CC^\times)^n$ of the vector field $X_{H_{r\phi,\epsilon}}$.
This is a submanifold whose topology only depends on $\sigma$, and can be
explicitly described. It may happen that $B^\epsilon_\sigma(d)=\emptyset$, and
we call dynamical support $\operatorname{DS}_\sigma(r\phi,\epsilon)\subset\operatorname{int}(\sigma)\cap\ZZ^n$
the set of lattice points $v$ such that $v=c_\sigma(d)$ for some $d$ with $B^\epsilon_\sigma(d)\neq \emptyset$;
this set depends on $\epsilon$ in general.

\begin{theorem} (Theorem \ref{ThmTopologyOfFamilies})
If $H_{r\phi,\epsilon}$ is the polyhedral Hamiltonian associated to the PL function
$r\phi$ on $\Sigma$ with smoothing parameter $\epsilon$, then for any $\sigma\in\Sigma$ the following facts hold:
\begin{enumerate}
	\item the dynamical support $\operatorname{DS}_\sigma(r\phi,\epsilon)$ is finite ;
	\item for any $d\in c_\sigma^{-1}(\operatorname{DS}_\sigma(r\phi,\epsilon))$ the family $B^\epsilon_\sigma(d)$ is a smooth manifold
		diffeomorphic to a disjoint union of thickened tori $\operatorname{int}(D^{n-\dim\sigma})\times (S^1)^n$ . 
\end{enumerate}
\end{theorem}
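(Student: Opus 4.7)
The plan hinges on the observation that $H_{r\phi,\epsilon}$ is pulled back from the moment polytope via $\mu_{r\phi}$, which makes its Hamiltonian dynamics extremely rigid. From the moment map relation $\iota_{X_{e_i}}\omega_{r\phi}=d\mu_{r\phi}^{(i)}$ of Proposition \ref{PropositionHamiltonianAction} and the chain rule,
$$X_{H_{r\phi,\epsilon}}(z) \; = \; \sum_{i=1}^n \frac{\partial h_{r\phi,\epsilon}}{\partial m^{(i)}}(\mu_{r\phi}(z))\, X_{e_i}(z) \quad ,$$
so $X_{H_{r\phi,\epsilon}}$ is tangent to the torus fibers of $\mu_{r\phi}$ and acts on each fiber $\mu_{r\phi}^{-1}(m)$ as the infinitesimal $(S^1)^n$-action with constant velocity $\nabla h_{r\phi,\epsilon}(m)\in\RR^n$. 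A point $z$ with $\mu_{r\phi}(z)=m$ is therefore $1$-periodic for $X_{H_{r\phi,\epsilon}}$ precisely when $\nabla h_{r\phi,\epsilon}(m)$ lies in the integer lattice, after absorbing the conventional $2\pi$ into the normalization of the circle action.

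Differentiating the explicit formula $h_{r\phi,\epsilon}(m)=\sum_{\rho}q_{\epsilon_\rho}(r_\rho(m))$ with $r_\rho(m)=\langle m,u_\rho\rangle - r\phi(u_\rho)$ gives
$$\nabla h_{r\phi,\epsilon}(m) \; = \; \sum_{\rho\in\Sigma(1)} q'_{\epsilon_\rho}(r_\rho(m))\, u_\rho \quad .$$
Because $\operatorname{supp}(q'_{\epsilon_\rho})$ is compact, only the rays $\rho$ for which $r_\rho(m)$ is small contribute a non-zero term, and completeness of the normal fan $\Sigma$ selects the unique cone $\sigma\in\Sigma$ containing $\nabla h_{r\phi,\epsilon}(m)$ in its interior. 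Setting $d_\rho:=q'_{\epsilon_\rho}(r_\rho(m))$ for $\rho\in\sigma(1)$ then recovers the expression $v=\nabla h_{r\phi,\epsilon}(m)=c_\sigma(d)$ from the theorem statement. Finiteness of $\operatorname{DS}_\sigma(r\phi,\epsilon)$ drops out immediately: each $d_\rho$ is bounded by $\sup|q'_{\epsilon_\rho}|$, so the values of $v$ lie in a bounded region of $\RR^n$ and only finitely many integer points can occur.

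For fixed $d\in c_\sigma^{-1}(\operatorname{DS}_\sigma(r\phi,\epsilon))$, the family $B^\epsilon_\sigma(d)$ is the $\mu_{r\phi}$-preimage of
$$A_d \; = \; \{\, m\in\operatorname{int}(P_{r\phi}) \; : \; q'_{\epsilon_\rho}(r_\rho(m))=d_\rho \; \text{for all}\; \rho\in\sigma(1),\; r_{\rho'}(m)\notin\operatorname{supp}(q'_{\epsilon_{\rho'}}) \; \text{for all}\; \rho'\notin\sigma(1) \,\} \quad .$$
Each equation $q'_{\epsilon_\rho}(r_\rho(m))=d_\rho$ restricts $r_\rho(m)$ to a finite set of values, so $A_d$ splits as a disjoint union of affine pieces, each cut out of $\operatorname{int}(P_{r\phi})$ by $\dim\sigma$ linear equations $r_\rho(m)=s_\rho$ (one per $\rho\in\sigma(1)$, for a choice of branch) together with the open conditions $r_{\rho'}(m)\notin\operatorname{supp}(q'_{\epsilon_{\rho'}})$ for $\rho'\notin\sigma(1)$. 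Each such piece is an open $(n-\dim\sigma)$-disk in $\RR^n$, and since $\mu_{r\phi}$ is topologically a trivial principal $(S^1)^n$-bundle over its image, its preimage in $(\CC^\times)^n$ is diffeomorphic to $\operatorname{int}(D^{n-\dim\sigma})\times(S^1)^n$, giving the claimed structure on $B^\epsilon_\sigma(d)$.

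The main obstacle is that $\sigma$ need not be simplicial: when the generators $\{u_\rho\}_{\rho\in\sigma(1)}$ are linearly dependent, the system $r_\rho(m)=s_\rho$ for $\rho\in\sigma(1)$ is over-determined and $A_d$ may be empty. I expect to handle this by showing that compatibility of the over-determined system is exactly the content of $v\in\operatorname{DS}_\sigma(r\phi,\epsilon)$, and that when compatible the solution locus still has dimension $n-\dim\sigma$ because the linear span of $\{u_\rho:\rho\in\sigma(1)\}$ has dimension $\dim\sigma$ by definition of a cone in a fan. This is a purely linear-algebraic and combinatorial input, extracted from the face structure of $P_{r\phi}$, and avoids invoking any resolution of $X(\Sigma)$ or tubular neighborhood of the singular divisor $D_{r\phi}$, consistently with the paper's philosophy.
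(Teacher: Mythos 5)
Your proposal is essentially the same proof as the paper's: compute $X_{H_{r\phi,\epsilon}}$ as a position-dependent linear combination of the generating vector fields of the torus action (the paper's Proposition \ref{PropSmoothingVectorField}), deduce finiteness of the dynamical support from the uniform bound on $q'_{\epsilon_\rho}$, and obtain the topology by writing the moment image of $B^\epsilon_\sigma(d)$ as an intersection of the open stratum with a union of affine subspaces whose normals span a $\dim\sigma$-dimensional space. Two small points to watch. First, there are sign slips: the paper's moment map convention is $\iota_{X_{e_i}}\omega_{r\phi}=-d\mu^{(i)}_{r\phi}$, and since $q'_{\epsilon_\rho}\leq 0$ one has $d_\rho=-q'_{\epsilon_\rho}(r_\rho(m))>0$, so it is $-\nabla h_{r\phi,\epsilon}(m)=\sum_\rho d_\rho u_\rho$ that lies in $\operatorname{int}(\sigma)$, not $\nabla h_{r\phi,\epsilon}(m)$ itself; moreover, for a general complete fan $-\sigma$ need not be a cone of $\Sigma$, so ``selecting the cone containing $\nabla h$'' is not the clean statement you want. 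None of this affects the periods or the topology, but it should be stated with the correct signs. Second, the non-simplicial ``obstacle'' you flag at the end is not actually one that needs resolving via a compatibility analysis: the theorem asserts that $B^\epsilon_\sigma(d)$ is a \emph{disjoint union} of thickened tori, and a disjoint union is allowed to have zero components. For each of the $2^{|\sigma(1)|}$ choice functions $\sigma(1)\to\{a(d_\rho),b(d_\rho)\}$, the corresponding affine system is either inconsistent (empty, contributes nothing) or has solution set of dimension exactly $n-\dim\operatorname{span}(u_\rho:\rho\in\sigma(1))=n-\dim\sigma$; intersecting with the open convex set $\mu_{r\phi}(\operatorname{int}(S^\epsilon_\sigma))$ gives an open convex set of that dimension (possibly empty), hence a ball. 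The paper's proof also phrases it this way, so your proof is complete once you drop the expectation that you must characterize exactly when the over-determined system is solvable.
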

 
The proof of this theorem consists in observing that $(\CC^\times)^n$ decomposes into locally
closed sets $S^\epsilon_\sigma$ that are invariant under the action of $(S^1)^n\subset (\CC^\times)^n$,
and such that for $z\in S^\epsilon_\sigma$ one has
$$X_{H_{r\phi,\epsilon}}(z)=\sum_{\rho\in\sigma(1)}q'_{\epsilon_\rho}(r_\rho(\mu_{r\phi}(z)))X_{u_\rho}(z) \quad ;$$
see Proposition \ref{PropSmoothingVectorField}. From this formula one sees that the dynamics
of $X_{H_{r\phi,\epsilon}}$ in the set $S^\epsilon_\sigma$ is that of a linear flow on each
moment fiber. Although the flow depends in general on which moment fiber one looks at,
all the fibers $\mu_{r\phi}^{-1}(m)\subset S^\epsilon_\sigma$ such that
$q'_{\epsilon_{\rho}}(r_\rho(m))=-d_\rho$ for all $\rho\in\sigma(1)$ have the same periodic flow,
and this constraint is satisfied on a disjoint union of open sets in $\operatorname{int}(P_{r\phi})$
diffeomorphic to balls whose dimension depends only on $\sigma$, by construction of the bump functions $q_{\epsilon_\rho}$.
Intuitively, the parameter $d$ prescribes slopes for the linear flow on each moment fiber, and
Proposition \ref{PropPeriods} computes the period of an orbit $\gamma\subset B^\epsilon_\sigma(d)$
to be
$$T(\gamma)=\frac{1}{|\operatorname{gcd}(\langle c_\sigma(d),e_k\rangle : k=1,\ldots ,n)|} \quad .$$
In particular, $\gamma$ has period one if and only if $c_\sigma(d)$ is a primitive
lattice point in the relative interior of the cone $\sigma$.

\subsection{Relation to HMS for Fano manifolds}

A large class of examples to which Theorem \ref{ThmContactType}
applies arises from the case where the projective manifold $X$ is Fano, and $D\in |K_X^{-1}|$
is an anti-canonical divisor. In this case the Okounkov bodies $P=\Delta_v(X,D)$
are polar dual to Fano polytopes in the sense of Akhtar-Coates-Galkin-Kasprzyk \cite{ACGK}, and
the associated toric varieties $X(\Sigma)$ are $\QQ$-Fano. This means that, in the constructions above,
one can take $\phi$ to be the support function of the toric $\QQ$-Cartier
anti-canonical divisor of $X(\Sigma)$: $\phi(u_\rho)=-1$
for all $\rho\in\Sigma(1)$; in particular, the assumption $\phi(u_\rho)<0$ of the theorem is
satisfied. In Homological Mirror Symmetry (HMS), the pair $(X,D)$ is expected to
have a partner Landau-Ginzburg model $(X^\vee,W)$ consisting of a complex variety
with a regular function $W\in\cO(X^\vee)$ called potential. The work of Tonkonog \cite{To}
suggests that Lagrangians $L\subset X$ that are monotone with respect to $\omega_D$
and become exact in a Liouville subdomain of $X$ should correspond to subschemes
$U\subset X^\vee$, with $W_{|U}$ being a generating function of rigid
pseudo-holomorphic curves. These curves are half-cylinders that
have boundary on $L$, and are obtained by neck-stretching from global pseudo-holomorphic
disks. Previous work of the author \cite{Ca20, Ca21} verifies a similar correspondence
between certain Lagrangian tori in complex Grassmannians and cluster charts
of a mirror Landau-Ginzburg model proposed by Rietsch \cite{Ri} (see also Marsh-Rietsch \cite{MR}); the proof relies on
toric degenerations induced by Okounkov bodies for the Grassmannian
previously studied by Rietsch-Williams \cite{RW}. The results of this article
could be useful where, in contrast with the case of Grassmannians, a candidate
Landau-Ginzburg model for $(X,D)$ is not already known. At the level of speculation,
one can imagine to construct a candidate Landau-Ginzburg model $(X^\vee,W)$ as gluing of algebraic torus charts,
one for each Okounkov body $\Delta_v(X,D)$, with
$W$ defined chart by chart as generating function of rigid pseudo-holomorphic half-cylinders in the completion of
the corresponding Liouville domains constructed in this article. The reader
is referred to the Gross-Siebert program \cite{GS18, GS19, GS21} for a	
construction of Landau-Ginzburg models based on (closed) logarithmic Gromov-Witten theory.

\subsection{Future directions}

We describe now some future directions of research that stem from this article
and we hope to explore in the future. In Lagrangian Floer theory, a result of Nishinou-Nohara-Ueda \cite{NNU}
computes the disk potential of Lagrangian tori obtained from degenerations of projective
manifolds, provided that the limit of the degeneration is a toric variety with a
small resolution of singularities. Theorem \ref{ThmContactType} constructs Liouville
domains in which these tori become exact, and one can use neck-stretching along the
contact boundary in the sense of symplectic field theory \cite{EGH} to constraint the global pseudo-holomorphic disks
that the Lagrangian tori can bound. Thanks to Theorem \ref{ThmTopologyOfFamilies}, the constraints
would be specific to the nature of the singularities of the degeneration at hand.
In a different direction, in the study of symplectic capacities explicit calculations for Liouville domains with torus symmetry
are often possible; see for example Gutt-Hutchings \cite{GH} and Siegel \cite{Si}. Similar
calculations for the Liouville domains introduced in this article would yield, by monotonicity
of capacities, lower bounds for symplectic capacities of projective manifolds that admit
toric degenerations; see Kaveh \cite{Ka} for similar results on the Gromov width. Finally,
there is a long history of results relating the birational geometry of varieties with their
symplectic topology; see McLean \cite{ML20} for a recent breakthrough. It is natural to
ask if the existence of special resolutions of singularities of the pair $(X(\Sigma),D_{r\phi})$ is
reflected in some algebraic property of symplectic cohomology of the corresponding
Liouville domains $W_{\epsilon,\delta}(r\phi)$; see Evans-Lekili \cite{EL} for some results
relating small resolutions and symplectic cohomology in the context of Du Val singularities. 

\vspace{0.3cm}

\textbf{Acknowledgements} I thank Chris Woodward for introducing me to toric degenerations,
and Lev Borisov for being the first to mention Okounkov bodies. This work benefited from
conversations with Mohammed Abouzaid and Mark McLean. I also thank Francesco Lin for
a discussion related to Remark \ref{RmkExoticSpheres}.

\section{Combinatorics of fans}\label{SecFans}

In this subsection, we recall some basic facts and notations about polyhedral fans;
see e.g. \cite{CLS, Fu} as general reference.

\begin{definition}\label{DefCone}
A polyhedral cone $\sigma\subset\RR^n$ is the convex hull of finitely many rays
starting from the origin. A cone is strongly convex (or sharp) if it contains no line,
and is rational if each ray $\rho$ has a primitive generator $u_\rho\in\ZZ^n$.
\end{definition}

\begin{definition}\label{DefFan}
A set $\Sigma$ of cones in $\RR^n$ is a fan if:
\begin{enumerate}
	\item every cone $\sigma\in\Sigma$ is polyhedral, strongly convex and rational ;
	\item if $\tau\subseteq \sigma$ is a face and $\sigma\in\Sigma$ then $\tau\in\Sigma$ ;
	\item if $\sigma_1,\sigma_2\in\Sigma$ then $\sigma_1\cap\sigma_2\in\Sigma$ .
\end{enumerate}
The support $|\Sigma|$ of a fan is the union of its cones, and $\Sigma(k)$ denotes the
set of its $k$-dimensional cones for each $0\leq k\leq n$. A fan is complete if $|\Sigma|=\RR^n$.
\end{definition}

To each complete fan in $\RR^n$, one associates a proper normal variety $X(\Sigma)$
with an action of $(\CC^\times)^n$, whose orbits are in bijection with $\Sigma$. The orbit
$\cO(\sigma)\subset X(\Sigma)$ has $\operatorname{codim}\cO(\sigma)=\operatorname{dim}\sigma$,
and $\sigma_1\subseteq \sigma_2$ if and only if $\overline{\cO(\sigma_1)}\cap\cO(\sigma_2)\neq\emptyset$
in the Zariski topology.

\begin{definition}\label{DefSpecialOrbits}
If $\Sigma$ is a complete fan, call $\cO(\{0\})=(\CC^\times)^n$
the maximal orbit, and $X(\Sigma)\setminus \cO(\{0\})=D_\Sigma$ the toric anti-canonical
divisor. Also call $\overline{\cO(\rho)}=D_\rho$ the prime divisor associated
to the ray $\rho\in\Sigma(1)$.
\end{definition}

\begin{definition}\label{DefPLFunction}
If $\Sigma$ is a complete fan of cones in $\RR^n$, a PL function on $\Sigma$ is
a continuous function $\phi : \RR^n\to \RR$ such that $\phi_{|\sigma}$ is linear for all
$\sigma\in\Sigma$. On a maximal cone $\sigma\in\Sigma(n)$, any such function is of the
form $\phi=\langle m_\sigma,-\rangle$ for a unique $m_\sigma\in\RR^n$, and $\phi$ is
called integral (resp. rational) when $m_\sigma\in\ZZ^n$ (resp. $m_\sigma\in\QQ^n$) for
all $\sigma\in\Sigma(n)$.
\end{definition}

Any divisor (resp. $\QQ$-divisor) on $X(\Sigma)$ is linearly equivalent to a torus-invariant
one, and the latter are of the form $\sum_{\rho\in\Sigma(1)}a_\rho D_\rho$ for some $a_\rho\in\ZZ$
(resp. $a_\rho\in\QQ$). The Cartier (resp. $\QQ$-Cartier) divisors are precisely those
for which there is an integral (resp. rational) PL function $\phi$ on $\Sigma$
such that $a_\rho=-\phi(u_\rho)$ for all $\rho\in\Sigma(1)$, in which case we write
$D_\phi = -\sum_{\rho\in\Sigma(1)}\phi(u_\rho)D_\rho$.

\begin{definition}\label{DefConcavity}
A function $\phi:\RR^n\to\RR$ that is PL on a complete fan $\Sigma$ is called
concave if $\phi(x)=\operatorname{min}_{\sigma\in\Sigma(n)}\langle m_\sigma,x\rangle$,
and strictly concave when for all $\sigma\in\Sigma(n)$ one has $\phi(x)=\langle m_\sigma,x\rangle$
if and only if $x\in\sigma$.
\end{definition}

The divisor $D_\phi$ on $X(\Sigma)$ is basepoint-free if and only if $\phi$ is concave,
and ample if and only if $\phi$ is strictly concave.

\begin{definition}\label{DefSectionPolytope}
If $\Sigma$ is a complete fan of cones in $\RR^n$ and $\phi$ is a PL function on $\Sigma$,
call section polytope of $\phi$ the set
$$P_\phi = \{ \; m\in\RR^n \; : \; \langle m,u_\rho\rangle \geq \phi (u_\rho) \; \textrm{for all}
\; \rho\in\Sigma(1) \; \} \quad .$$
\end{definition}

Writing a lattice point of the section polytope $m\in P_\phi\cap \ZZ^n$ as $m=(m^{(1)},\ldots ,m^{(n)})$,
one has an associated character $\chi^m(z_1,\ldots ,z_n)=z_1^{m^{(1)}}\cdots z_n^{m^{(n)}}$ of
the complex torus $(\CC^\times)^n$. If $r\in\NN^+$ is such that $rD_{\phi}=D_{r\phi}$ is
Cartier, the characters $\chi^m$ with $m\in P_\phi\cap \ZZ^n$ form
a basis of the space of sections $\Gamma(X(\Sigma),\cO(D_{r\phi}))$.

\section{Character sums and K\"{a}hler potential}\label{SecPotential}

\subsection{The Fubini-Study potential}

Denote $\PP^d=\Proj\CC[x_0,\ldots ,x_d]$ the $d$-dimensional complex projective
space, and $U_k=\{ \; [x_0:\cdots :x_d] \; : \; x_k \neq 0 \}$ for $0\leq k\leq d$
the $d+1$ open sets of a holomorphic atlas with charts
$$\psi_k: U_k\to \CC^d \quad , \quad [x_0:\dots :x_d] \mapsto
	\left( \frac{x_0}{x_k}, \ldots ,\widehat{\frac{x_k}{x_k}}, \ldots ,\frac{x_d}{x_k} \right) \quad .$$
Thinking the $d$-dimensional affine space as $\CC^d=\Spec\CC[z_1,\ldots ,z_d]$,
the function
$$\rho : \CC^d \to \RR \quad , \quad \rho (z_1,\ldots ,z_d)=\log (1+|z_1|^2+\ldots +|z_d|^2)$$
is plurisubharmonic, and hence defines a symplectic form $\omega_{FS}^{loc}=i\del\delb\rho$
on $\CC^d$ which is compatible with the standard complex structure $J_{\CC^d}$.
The pull-back $\omega_{FS}^{[k]} = \psi_k^*\omega_{FS}^{loc}$ is a symplectic
structure on $U_k$ which is compatible with the complex structure $J_{U_k}$,
and one can check the following.

\begin{lemma}\label{LemmaLocFS}
$\omega_{FS}^{[k]} = i\del\delb\psi_k^*\rho$.
\end{lemma}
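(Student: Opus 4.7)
The plan is to exploit the fact that pullback commutes with the Dolbeault operators under holomorphic maps. More precisely, for any holomorphic map $f:X\to Y$ between complex manifolds and any smooth form $\alpha$ on $Y$, one has $f^*(\del\alpha)=\del(f^*\alpha)$ and $f^*(\delb\alpha)=\delb(f^*\alpha)$. This is because, in local holomorphic coordinates, the Jacobian of $f$ is $\CC$-linear, so pullback preserves the bidegree decomposition $\Omega^k=\bigoplus_{p+q=k}\Omega^{p,q}$; combined with $f^*d=df^*$, which holds for any smooth map, one obtains the two identities above by projecting onto the appropriate bidegrees.

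Assuming this, the proof of the lemma is immediate. First I would note that $\psi_k:U_k\to\CC^d$ is a biholomorphism (it is one of the standard holomorphic charts on $\PP^d$), hence in particular holomorphic. Then I would compute
\[
\omega_{FS}^{[k]} \;=\; \psi_k^*\omega_{FS}^{loc} \;=\; \psi_k^*(i\del\delb\rho) \;=\; i\,\psi_k^*(\del\delb\rho) \;=\; i\,\del(\psi_k^*\delb\rho) \;=\; i\,\del\delb(\psi_k^*\rho),
\]
where the last two equalities apply the commutativity of $\del$ and $\delb$ with $\psi_k^*$ in turn.

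The only substantive point is the verification that $\psi_k^*$ commutes with $\del$ and $\delb$; this is a standard fact of complex geometry, but for the sake of self-containment I might include a brief justification in the actual write-up, pointing out that $d=\del+\delb$, that $f^*d=df^*$ holds for any smooth $f$, and that holomorphicity of $f$ guarantees $f^*(\Omega^{p,q}(Y))\subseteq\Omega^{p,q}(X)$, forcing the two pieces to match bidegree by bidegree. Since there is no analytic difficulty and no nontrivial combinatorics involved, I do not anticipate any real obstacle here; the lemma is essentially a bookkeeping statement.
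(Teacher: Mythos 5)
Your proof is correct and takes essentially the same approach as the paper: both arguments reduce to the fact that pullback along the holomorphic map $\psi_k$ commutes with the complex differential operators. The paper packages this via the operator $d^c=-i(\del-\delb)$ and the identity $dd^c=2i\del\delb$ (which is also the form it reuses later for $\theta_{r\phi}=\tfrac12 d^c F_{r\phi}$), whereas you commute $\psi_k^*$ directly past $\del$ and $\delb$; the two bookkeeping choices are interchangeable here.
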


\begin{proof}
Recall that $d=\del + \delb$ and $d^c=-i(\del - \delb )$,
so that $dd^c = 2i\del\delb$. This gives
$i\del\delb\psi_k^*\rho = \frac{1}{2}dd^c\psi_k^*\rho$. One also has
$\psi_k^*d^c\rho = d^c\psi_k^*\rho$, because $d^c\rho = -d\rho \circ J_{\CC^d}$
and $d\psi_k$ is complex linear for any holomorphic chart $\psi_k$. In conclusion
$\frac{1}{2}dd^c\psi_k^*\rho = \psi_k^*\left(\frac{1}{2}dd^c\rho\right) = \psi_k^*\omega_{FS}^{loc}$.
\end{proof}

\begin{remark}
A popular notation in symplectic topology is $d^\CC f=df\circ J$. This is related to the
notation used in this article by $d^c = -d^\CC$.
\end{remark}

If $0\leq k,k'\leq d$, on $U_k\cap U_{k'}$ one can check that 
$\omega^{[k]}_{FS}=\omega^{[k']}_{FS}$,
and thus the local symplectic structures on each $U_k$ glue to a global
symplectic structure $\omega_{FS}$ on $\PP^d$ which is compatible with
the complex structure $J_{\PP^d}$, and is known as the Fubini-Study form.
However, since
$$\psi_k^*\rho = \log\left( \sum_{0\leq t\leq d}\lvert x_t\rvert^2\right)-2\log|x_k|$$
the local K\"{a}hler potentials $\psi_k^*\rho$ of Lemma \ref{LemmaLocFS}
do not match on the overlaps, and thus do not glue to a global potential for
$\omega_{FS}$. In what follows we will be mostly interested in the open set
$U=U_0\cap\ldots\cap U_d$, and we get rid of this ambiguity by adopting the
following convention.

\begin{definition}\label{DefFSPotential}
The function $\psi_0^*\rho_{|U}:U\to\RR$ is called the Fubini-Study potential
of the symplectic structure ${\omega_{FS}}_{|U}$.
\end{definition}

\subsection{A basis of characters}

Let $\Sigma$ a complete fan of cones in $\RR^n$ (Definition \ref{DefFan}), and assume
that $\phi:\RR^n\to\RR$ is a rational PL function on $\Sigma$ (Definition \ref{DefPLFunction})
that is strictly concave (Definition \ref{DefConcavity}). Since the $\QQ$-divisor $D_\phi$
is ample, for large $r\in\NN^+$ one has $rD_\phi=D_{r\phi}$ Cartier and very ample. Denoting
$d(r)=\operatorname{dim}\Gamma(X(\Sigma),\cO(D_{r\phi}))$, the basis of
characters associated to lattice points of the section polytope $P_{r\phi}\cap\ZZ^n=\{m_1,\ldots ,m_{d(r)}\}$
(Definition \ref{DefSectionPolytope}) defines a map

$$\nu_{r\phi}:(\CC^\times )^n\to \PP^{d(r)-1} \quad , \quad \nu_{rD_\Sigma}(z_1,\ldots ,z_n)=[\chi^{m_1}(z_1,\ldots ,z_n):\cdots :\chi^{m_{d(r)}}(z_1,\ldots ,z_n)]$$
which extends to a closed embedding of $X(\Sigma)$ in $\PP^{d(r)-1}$, and the Fubini-Study
symplectic structure on the target complex projective space induces a symplectic
structure $\omega_{r\phi}=\nu_{r\phi}^*\omega_{FS}$ which is compatible
with the complex structure $J_{(\CC^\times)^n}$ of the maximal torus orbit $X(\Sigma)$.

\subsection{The induced K\"{a}hler potential}

The Fubini-Study potential on the set $U\subset \PP^{d(r)-1}$ of Definition \ref{DefFSPotential}
induces a K\"{a}hler potential for the symplectic structure $\omega_{r\phi}$ on $(\CC^\times)^n$ as follows.

\begin{lemma}\label{LemmaInducedPotential}
$\omega_{r\phi} = i\del\delb \nu_{r\phi}^*\psi_0^*\rho$.
\end{lemma}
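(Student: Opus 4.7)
The plan is to reduce this to Lemma \ref{LemmaLocFS} applied with $k=0$, using that the embedding $\nu_{r\phi}$ is holomorphic and takes values in the open set $U=U_0\cap\cdots\cap U_{d(r)-1}$.

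First I would check that $\nu_{r\phi}((\CC^\times)^n)\subseteq U$. By construction, for any $z\in (\CC^\times)^n$ every homogeneous coordinate $\chi^{m_i}(z)$ of $\nu_{r\phi}(z)$ is a monomial in the $z_k^{\pm 1}$, hence nonzero. Therefore $\nu_{r\phi}(z)\in U_k$ for every $0\leq k\leq d(r)-1$, so $\nu_{r\phi}(z)\in U$. In particular $\nu_{r\phi}^*\psi_0^*\rho$ is a well-defined smooth function on $(\CC^\times)^n$, which is the function appearing on the right-hand side of the claimed identity.

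Next I would restrict Lemma \ref{LemmaLocFS} to $U$ in the case $k=0$ to write ${\omega_{FS}}_{|U}= i\del\delb \psi_0^*\rho$, and pull back by $\nu_{r\phi}$. By definition $\omega_{r\phi}=\nu_{r\phi}^*\omega_{FS}$, so one obtains
$$\omega_{r\phi}=\nu_{r\phi}^*(i\del\delb\psi_0^*\rho) \quad .$$
The last step is to commute $\nu_{r\phi}^*$ past $\del\delb$. This is the same manipulation used inside the proof of Lemma \ref{LemmaLocFS}: since $\nu_{r\phi}$ is a holomorphic map between complex manifolds, $d\nu_{r\phi}$ is complex linear, so $\nu_{r\phi}^*d^c=d^c\nu_{r\phi}^*$ on functions, and since $\nu_{r\phi}^*d=d\nu_{r\phi}^*$ always, one concludes $\nu_{r\phi}^*(dd^c\, f)=dd^c(\nu_{r\phi}^*f)$ for any smooth $f$. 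Combined with $dd^c=2i\del\delb$, this yields $\nu_{r\phi}^*(i\del\delb\psi_0^*\rho) = i\del\delb\nu_{r\phi}^*\psi_0^*\rho$, which is exactly the claim.

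There is no real obstacle: the only thing to verify is that the image of $\nu_{r\phi}$ avoids the coordinate hyperplanes so that $\psi_0^*\rho$ pulls back to a globally defined function on $(\CC^\times)^n$ (not merely one glued from local potentials), and this is automatic because the torus consists precisely of points where all characters are nonzero.
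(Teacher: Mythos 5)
Your proof is correct and follows the same route the paper does: it relies on holomorphy of $\nu_{r\phi}$, the fact that its image lies in $U$, and the commutation of $d^c$ (equivalently $\del\delb$) with pullback by a holomorphic map, which is exactly the argument in Lemma \ref{LemmaLocFS} that the paper invokes "analogously." You have simply written out the details that the paper leaves implicit.
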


\begin{proof}
The map $\nu_{r\phi}$ is holomorphic with $\nu_{r\phi}((\CC^\times)^n)\subset U$,
and arguments analogous to Lemma \ref{LemmaLocFS} apply.
\end{proof}

More explicitly, the induced K\"{a}hler potential is
$$\nu_{r\phi}^*\psi_0^*\rho = \log\left( \sum_{1\leq t\leq d(r)}|\chi^{m_t}|^2 \right) - 2\log |\chi^{m_1}| \quad ;$$
up to summing to $\phi$ a linear function, one can assume that $0\in P_{r\phi}\cap \ZZ^n$ is one of the lattice points, and
taking this to be $m_1=0$ one gets $\log|\chi^{m_1}| = \log |\chi^0| = \log 1 = 0$.

\begin{definition}\label{DefInducedPotential}
The function $F_{r\phi}=\log \left( \sum_{m\in P_{r\phi}\cap \ZZ^n}|\chi^m|^2 \right)$
is called the K\"{a}hler potential of $\omega_{r\phi}$ on $(\CC^\times)^n$ induced by
the divisor $D_{r\phi}$.
\end{definition}

For future reference, denote $\theta_{r\phi} = \frac{1}{2}d^cF_{r\phi}$
the primitive of $\omega_{r\phi}$ induced by the K\"{a}hler potential $F_{r\phi}$.

\subsection{Formulas in complex coordinates}

We record here for later use some explicit formulas expressing the differential forms
$\theta_{r\phi}$ and $\omega_{r\phi}$ in terms of the coordinate $z\in(\CC^\times)^n$:
\vspace{2em}
$$\theta_{r\phi}= -\frac{i}{2}\sum_{k=1}^n\frac{\sum_{m\in P_{r\phi}\cap\ZZ^n}m^{(k)}|\chi^m|^2}{\sum_{m\in P_{r\phi}\cap\ZZ^n}|\chi^m|^2}
	\left(\frac{dz_k}{z_k}-\frac{d\overline{z}_k}{\overline{z}_k} \right)$$
\vspace{2em}
$$\omega_{r\phi}=i\sum_{1\leq h<k\leq n}
\frac{\sum_{m\in P_{r\phi}\cap\ZZ^n}m^{(h)}m^{(k)}|\chi^m|^2}{\sum_{m\in P_{r\phi}\cap\ZZ^n}|\chi^m|^2}
\left(\frac{dz_h}{z_h}\wedge\frac{d\overline{z}_k}{\overline{z}_k} - \frac{d\overline{z}_h}{\overline{z}_h}\wedge\frac{dz_k}{z_k} \right)$$
$$-i\sum_{1\leq h<k\leq n}
\frac{\left(\sum_{m\in P_{r\phi}\cap\ZZ^n}m^{(h)}|\chi^m|^2\right)\left(\sum_{m\in P_{r\phi}\cap\ZZ^n}m^{(k)}|\chi^m|^2\right)}{\left(\sum_{m\in P_{r\phi}\cap\ZZ^n}|\chi^m|^2\right)^2}
\left(\frac{dz_h}{z_h}\wedge\frac{d\overline{z}_k}{\overline{z}_k} - \frac{d\overline{z}_h}{\overline{z}_h}\wedge\frac{dz_k}{z_k} \right)$$
$$+i\sum_{k=1}^n\left(\frac{\sum_{m\in P_{r\phi}\cap\ZZ^n}(m^{(k)})^2|\chi^m|^2}{\sum_{m\in P_{r\phi}\cap\ZZ^n}|\chi^m|^2} - \frac{\left(\sum_{m\in P_{r\phi}\cap\ZZ^n}m^{(k)}|\chi^m|^2\right)^2}{\left(\sum_{m\in P_{r\phi}\cap\ZZ^n}|\chi^m|^2\right)^2} \right)\left(\frac{dz_k}{z_k}\wedge\frac{d\overline{z}_k}{\overline{z}_k} \right)$$
\vspace{2em}

\section{Polyhedral Hamiltonians}\label{SecPolyhedralHam}

\subsection{Radial coordinates and bump functions}

For each ray $\rho\in\Sigma(1)$ introduce a coordinate $r_\rho$ measuring the distance of a point
from the facet $\langle m,u_\rho\rangle = r\phi(u_\rho)$ of the section polytope $P_{r\phi}$,
with interior points having positive distance. As a result, one has $m\in P_{r\phi}$
if and only if $r_\rho(m) \geq 0$ for all rays $\rho\in \Sigma (1)$.

\begin{definition}\label{DefRadialCoordinate}
The radial coordinate $r_\rho : \RR^n \to \RR$ associated to $\rho\in\Sigma(1)$
is the function $r_\rho (m)=\langle m, u_\rho\rangle -r\phi(u_\rho)$. 
\end{definition}

For each $\epsilon_\rho \in (0,\infty)$ also introduce a function $q_{\epsilon_\rho}:[0,\infty )\to [0,1]$
given by
$$
q_{\epsilon_\rho}(x)=
\begin{cases}
\exp\left( -\frac{x^2}{\epsilon_\rho^2(\epsilon_\rho^2-x^2)} \right) \quad \textrm{for} \; x\in [0,\epsilon_\rho )\\
0 \quad \textrm{for} \; x\in [\epsilon_\rho,\infty )
\end{cases}
$$

\begin{definition}\label{DefBumpFunction}
The function $q_{\epsilon_\rho}$ is the bump function associated to $\rho\in\Sigma(1)$
with smoothing parameter $\epsilon_\rho$.
\end{definition}

We record below some basic properties of the bump function that will be useful later.

\begin{lemma}\label{LemmaBumpProperties}
The bump function $q_{\epsilon_\rho}$
satisfies the following properties:
\begin{enumerate}
	\item it is smooth, with $q_{\epsilon_\rho}(0)=1$ and $q_{\epsilon_\rho}(x)\geq 0$ with support $[0,\epsilon_\rho )$ ;
	\item $q_{\epsilon_\rho}'(x)\leq 0$ with equality if and only if $x=0$ or $x\in [\epsilon_\rho,\infty )$ ;
	\item there exists $x_{\epsilon_\rho}\in (0,\epsilon_\rho)$ such that $q_{\epsilon_\rho}''(x_{\epsilon_\rho})=0$,
		$q_{\epsilon_\rho}''(x) < 0$ for $x\in (0,x_{\epsilon_\rho})$ and $q_{\epsilon_\rho}''(x) > 0$ for $x\in (x_{\epsilon_\rho}, \epsilon_\rho)$ ;
	\item $\inf_{\epsilon_\rho > 0}q_{\epsilon_\rho}'(x_{\epsilon_\rho})=-\infty$ ;
	\item $\operatorname{inf}\{\; \epsilon_\rho\in (0,1) \; : \; q_{\epsilon_\rho}'(x_{\epsilon_\rho})\notin\QQ \; \}=0$ .
\end{enumerate}
\end{lemma}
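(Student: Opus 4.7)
The strategy is to write $q = q_{\epsilon_\rho}$ as $\exp(f)$ with $f(x) = -x^2/[\epsilon_\rho^2(\epsilon_\rho^2 - x^2)]$ on $[0, \epsilon_\rho)$, so that every derivative of $q$ on its support reduces to a rational function of $x$ and $\epsilon_\rho$ times $q$. Properties (1) and (2) then follow immediately from elementary calculus: smoothness across $x = \epsilon_\rho$ is the standard argument that $\exp(-1/P(x))$ vanishes to all orders at any zero of a polynomial $P$; the value $q(0) = 1$, positivity, and the support statement are direct; and a one-line computation yields $q'(x) = -\tfrac{2x}{(\epsilon_\rho^2 - x^2)^2}q(x)$ on $[0, \epsilon_\rho)$, from which (2) is clear since $q > 0$ there.

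For (3), differentiating once more and using $q''/q = f'' + (f')^2$ gives $q''(x) = \tfrac{2\,q(x)}{(\epsilon_\rho^2-x^2)^4}\,g(x)$ for the polynomial $g(x) = 3x^4 - 2(\epsilon_\rho^2 - 1)x^2 - \epsilon_\rho^4$. I would then note $g(0) = -\epsilon_\rho^4 < 0$ and $g(\epsilon_\rho) = 2\epsilon_\rho^2 > 0$, and analyze $g'(x) = 4x(3x^2 + 1 - \epsilon_\rho^2)$: when $\epsilon_\rho \leq 1$ the factor $3x^2 + 1 - \epsilon_\rho^2$ is positive on $(0, \epsilon_\rho)$ so $g$ is strictly increasing, while when $\epsilon_\rho > 1$ the polynomial $g$ is unimodal on $[0, \epsilon_\rho]$, decreasing from $-\epsilon_\rho^4$ to a negative minimum and then increasing to $2\epsilon_\rho^2$. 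In either case $g$ has a unique zero $x_{\epsilon_\rho} \in (0, \epsilon_\rho)$ with $g'(x_{\epsilon_\rho}) > 0$, which is exactly (3).

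Parts (4) and (5) are softer. For (4), since $q'_{\epsilon_\rho}(x_{\epsilon_\rho})$ is the infimum of $q'_{\epsilon_\rho}$, it suffices to display a specific sequence of values tending to $-\infty$. Plugging $x = \epsilon_\rho/2$ into the formula from (2) gives $q'_{\epsilon_\rho}(\epsilon_\rho/2) = -\tfrac{16}{9\epsilon_\rho^3}\exp\!\bigl(-1/(3\epsilon_\rho^2)\bigr)$, which tends to $-\infty$ as $\epsilon_\rho \to 0^+$. For (5), the implicit function theorem applied to $g(x_{\epsilon_\rho}) = 0$ (using $g'(x_{\epsilon_\rho}) > 0$) makes $\epsilon_\rho \mapsto x_{\epsilon_\rho}$ smooth, hence $M(\epsilon_\rho) := q'_{\epsilon_\rho}(x_{\epsilon_\rho})$ is continuous on $(0, 1)$; by (4) its image on any subinterval $(0, \delta)$ is unbounded below, hence uncountable, while $\QQ$ is countable, forcing $M(\epsilon_\rho)$ to be irrational for $\epsilon_\rho$ arbitrarily close to $0$.

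The only step requiring real care is (3), where uniqueness of $x_{\epsilon_\rho}$ and the sign change of $q''$ rely on the explicit factorization of $g$ together with a brief case split on whether $\epsilon_\rho \leq 1$. Everything else reduces either to routine calculus on the exponential formula or to a countability argument.
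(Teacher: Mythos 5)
Your proposal follows the paper's proof closely for parts (1), (2), (3), and (5), and is actually more thorough than the paper on part (3): the paper only carries out the monotonicity argument for $g$ under the restriction $\epsilon_\rho<1$, whereas you correctly note that for $\epsilon_\rho>1$ the polynomial $g$ is unimodal rather than monotone, but the sign pattern $g(0)<0<g(\epsilon_\rho)$ together with the factorization $g'(x)=4x(3x^2+1-\epsilon_\rho^2)$ still yields a unique simple zero in $(0,\epsilon_\rho)$. Your use of the implicit function theorem in (5) in place of the paper's explicit biquadratic formula is also a legitimate alternative.

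However, your proof of (4) contains a genuine error. Evaluating at $x=\epsilon_\rho/2$ gives
$$q_{\epsilon_\rho}'\!\left(\tfrac{\epsilon_\rho}{2}\right)=-\frac{16}{9\epsilon_\rho^3}\exp\!\left(-\frac{1}{3\epsilon_\rho^2}\right),$$
but this tends to $0$ as $\epsilon_\rho\to 0^+$, not $-\infty$: setting $u=1/\epsilon_\rho$, the expression is $-\tfrac{16}{9}u^3 e^{-u^2/3}$, and the Gaussian decay overwhelms the cubic growth. The trouble is that $\epsilon_\rho/2$ scales linearly in $\epsilon_\rho$, but the inflection point $x_{\epsilon_\rho}$ scales like $\epsilon_\rho^2/\sqrt{2}$ for small $\epsilon_\rho$ (as one sees by expanding your biquadratic root), so $\epsilon_\rho/2$ sits far out in the flat tail of the bump. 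The paper's argument is different and correct: by the mean value theorem there is $y_{\epsilon_\rho}\in(0,\epsilon_\rho)$ with $q_{\epsilon_\rho}'(y_{\epsilon_\rho})=(q_{\epsilon_\rho}(\epsilon_\rho)-q_{\epsilon_\rho}(0))/\epsilon_\rho=-1/\epsilon_\rho$, and since by (3) the minimum of $q_{\epsilon_\rho}'$ is attained at $x_{\epsilon_\rho}$, one has $q_{\epsilon_\rho}'(x_{\epsilon_\rho})\leq -1/\epsilon_\rho\to-\infty$. If you want to keep the direct-evaluation style, plug in $x=\epsilon_\rho^2$ instead: then $q_{\epsilon_\rho}(\epsilon_\rho^2)=\exp(-1/(1-\epsilon_\rho^2))\to e^{-1}$ while the prefactor $-2\epsilon_\rho^2/(\epsilon_\rho^2-\epsilon_\rho^4)^2\sim -2/\epsilon_\rho^2\to-\infty$. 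A further small remark: in (5) you invoke (4) to get unboundedness on $(0,\delta)$, but the statement of (4) only gives $\inf_{\epsilon_\rho>0}$; you really need the asymptotic $q_{\epsilon_\rho}'(x_{\epsilon_\rho})\to-\infty$ as $\epsilon_\rho\to 0^+$, which is what both correct proofs of (4) actually deliver.
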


\begin{proof}
(1) This holds by direct inspection of the definition of bump function.\\
(2) It follows from (1) after computing $q_{\epsilon_\rho}'(x)=-q_{\epsilon_\rho}(x)2x(\epsilon_\rho^2-x^2)^{-2}$.\\
(3) One computes $q_{\epsilon_\rho}''(x)=2q_{\epsilon_\rho}(x)g_{\epsilon_\rho}(x)(\epsilon_\rho^2-x^2)^{-4}$,
and verifies that for $\epsilon_\rho <1$ the function $g_{\epsilon_\rho}$ is 
differentiable and strictly increasing
on $[0,\epsilon_\rho]$, with $g_{\epsilon_\rho}(0)<0$ and $g_{\epsilon_\rho}(\epsilon_\rho)>0$.
The claim follows from (1) and the intermediate value theorem.\\
(4) By the mean value theorem, there exists some point $y_{\epsilon_\rho} \in (0,\epsilon_\rho)$
such that $q_{\epsilon_\rho}'(y_{\epsilon_\rho})=(q_{\epsilon_\rho}(\epsilon_\rho)-q_{\epsilon_\rho}(0))\epsilon_\rho^{-1}=-\epsilon_\rho^{-1}$.
By (3) $q_{\epsilon_\rho}'$ achieves its minimum at $x_{\epsilon_\rho}$, so that
$q_{\epsilon_\rho}'(x_{\epsilon_\rho})\leq q_{\epsilon_\rho}'(y_{\epsilon_\rho})=-\epsilon_\rho^{-1}$.\\
(5) It suffices to prove that $q'(x_{\epsilon_\rho})$ is a continuous function of
$\epsilon_\rho\in (0,1)$. One can compute the function $g_{\epsilon_\rho}$ mentioned in (3) explicitly, and it is
$g_{\epsilon_\rho}(x)=3x^4+(2-2\epsilon_\rho^2)x^2-\epsilon_\rho^4$.
This polynomial has a unique positive root, and it must be the point $x_\epsilon\in (0,\epsilon_\rho)$ of (3).
Since the polynomial is biquadratic, one can compute
$$x_{\epsilon_\rho}=\frac{2\epsilon_{\rho}^2-2+(16\epsilon_\rho^4-8\epsilon_\rho^2+4)^{1/2}}{6} \quad .$$
In particular, $x_{\epsilon_\rho}$ is a continuous function of $\epsilon_\rho$
and so is $q'(x_{\epsilon_\rho})$.
\end{proof}

\subsection{Smoothing the boundary}

Applying the bump functions to the radial coordinates introduced earlier one
gets smoothings of the polyhedral boundary $\partial P_{r\phi}$
that depend on the parameter $\epsilon = (\epsilon_\rho)_{\rho\in\Sigma (1)}\in \RR_{> 0}^{\Sigma (1)}$.
More precisely, consider the function
$$h_{r\phi}(m) = \sum_{\rho\in\Sigma (1)}q_{\epsilon_\rho}(r_\rho(m)) \quad ;$$
the smoothings will arise as level sets of this function.

\begin{lemma}\label{LemmaBumpAndPolytope}
The smoothing function $h_{r\phi,\epsilon}$ and the polytope $P_{r\phi}$
are related in the following way:
\begin{enumerate}
	\item $h_{r\phi,\epsilon}^{-1}(0)$ is a polytope contained in the
	interior of $P_{r\phi}$, and if $\epsilon_\rho \leq -r\phi(u_\rho)$ for all $\rho\in\Sigma(1)$
	it contains the origin ;
	\item if $\sigma\in \Sigma$ is any cone of positive dimension and $F_\sigma\subset \partial P_{r\phi}$
	is the corresponding face, then ${h_{r\phi,\epsilon}}_{|F_\sigma}\geq |\sigma(1)|$ .
\end{enumerate}
\end{lemma}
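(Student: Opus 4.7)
The plan is to unwind both statements to statements about the radial coordinates $r_\rho(m) = \langle m, u_\rho\rangle - r\phi(u_\rho)$ and then exploit the support properties of the bump functions recorded in Lemma \ref{LemmaBumpProperties}.

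For part (1), I would start by characterizing $h_{r\phi,\epsilon}^{-1}(0)$ explicitly. Since each $q_{\epsilon_\rho}$ is nonnegative on $[0,\infty)$ with support $[0,\epsilon_\rho)$, the sum $h_{r\phi,\epsilon}(m) = \sum_{\rho \in \Sigma(1)} q_{\epsilon_\rho}(r_\rho(m))$ vanishes precisely when $r_\rho(m) \geq \epsilon_\rho$ for every $\rho \in \Sigma(1)$. Therefore
\[
h_{r\phi,\epsilon}^{-1}(0) = \{\, m \in \RR^n : \langle m, u_\rho\rangle \geq r\phi(u_\rho) + \epsilon_\rho \text{ for all } \rho \in \Sigma(1)\,\},
\]
which is a polytope (bounded because $\Sigma$ is complete, so the $u_\rho$ positively span $\RR^n$, giving the same boundedness that holds for $P_{r\phi}$). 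Since each defining inequality is strictly tighter than the one for $P_{r\phi}$ by $\epsilon_\rho > 0$, the polytope lies in $\operatorname{int}(P_{r\phi})$. For the origin, I would evaluate $r_\rho(0) = -r\phi(u_\rho)$ and observe that $0 \in h_{r\phi,\epsilon}^{-1}(0)$ exactly when $\epsilon_\rho \leq -r\phi(u_\rho)$ for every $\rho$, which is the hypothesis.

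For part (2), I would use the correspondence between faces of $P_{r\phi}$ and cones of $\Sigma$: the face $F_\sigma$ is the intersection of the facets indexed by $\rho \in \sigma(1)$, so every $m \in F_\sigma$ satisfies $r_\rho(m) = 0$ for all $\rho \in \sigma(1)$. By Lemma \ref{LemmaBumpProperties}(1), each such $\rho$ then contributes $q_{\epsilon_\rho}(0) = 1$ to the sum defining $h_{r\phi,\epsilon}$, while the remaining terms indexed by $\rho \notin \sigma(1)$ are nonnegative. Summing these contributions gives $h_{r\phi,\epsilon}(m) \geq |\sigma(1)|$.

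Neither part should present a real obstacle; the only subtlety is making sure the set in (1) is genuinely a polytope (i.e.\ bounded), which follows from completeness of $\Sigma$ via the same argument that shows $P_{r\phi}$ is a polytope when $\phi$ is strictly concave. I would simply invoke that, rather than redoing the argument.
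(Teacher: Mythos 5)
Your proof is correct and follows essentially the same route as the paper's: reduce both claims to facts about the radial coordinates $r_\rho$ via the support and normalization of $q_{\epsilon_\rho}$. You spell out the boundedness of $h_{r\phi,\epsilon}^{-1}(0)$ slightly more explicitly (via completeness of $\Sigma$), which the paper's proof leaves implicit, but the core argument is identical.
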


\begin{proof}
(1) By Lemma \ref{LemmaBumpProperties} $q_{\epsilon_\rho}\geq 0$ for all $\rho\in\Sigma(1)$
and $q_{\epsilon_\rho}(r_\rho(m))=0$ if and only if $r_\rho(m)\geq \epsilon_\rho$. It follows
that $h_{r\phi,\epsilon}(m)=0$ if and only if $r_\rho(m)\geq \epsilon_\rho$ for all $\rho\in\Sigma(1)$.
This condition is satisfied by $m=0$ when $\epsilon_\rho \leq -r\phi(u_\rho)$ for all $\rho\in\Sigma(1)$,
by Definition \ref{DefRadialCoordinate} of radial coordinate.
\\
(2) If $m\in F_\sigma$ then $r_\rho(m)=0$ and $q_{\epsilon_\rho}(r_\rho(m))=1$ for all $\rho \in \sigma(1)$.
Therefore
$$h_{r\phi,\epsilon}(m) = \sum_{\rho\in\Sigma (1)}q_{\epsilon_\rho}(r_\rho(m))
= \sum_{\rho\in\sigma (1)}q_{\epsilon_\rho}(r_\rho(m))+\sum_{\rho\notin\sigma (1)}q_{\epsilon_\rho}(r_\rho(m))
\geq |\sigma(1)| \quad .$$
\end{proof}

\begin{proposition}\label{PropLevelSpheres}
For all $\delta\in (0,1)$, the level set $h_{r\phi,\epsilon}^{-1}(\delta)$
is a smooth hypersurface contained in the interior of $P_{r\phi}$ and it is
homeomorphic to a sphere.
\end{proposition}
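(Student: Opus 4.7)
Set $h := h_{r\phi,\epsilon}$, $P := P_{r\phi}$ and $P^- := h^{-1}(0) = \{m : r_\rho(m) \geq \epsilon_\rho \text{ for all } \rho\}$, a polytope strictly inside $P$ by Lemma \ref{LemmaBumpAndPolytope}(1). My plan is to realize $h^{-1}(\delta)$ as a radial graph over $S^{n-1}$ centered at a base point $p_0\in\operatorname{int}(P^-)$, using monotonicity of $h$ along rays from $p_0$; I assume $\operatorname{int}(P^-)\neq\emptyset$, which holds under the hypotheses of Theorem \ref{ThmContactType} (one can take $p_0=0$ whenever $\epsilon_\rho<-r\phi(u_\rho)$ for all $\rho$). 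First, $h$ extends continuously to $P$, and Lemma \ref{LemmaBumpAndPolytope}(2) gives $h|_{\partial P}\geq 1>\delta$, so $h^{-1}(\delta)\subset\operatorname{int}(P)$ and is compact.

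Fix $p_0\in\operatorname{int}(P^-)$, so $r_\rho(p_0)>\epsilon_\rho$ for every $\rho$, and for each $v\in S^{n-1}$ consider $f_v(t):=h(p_0+tv)$ on $[0,T_v]$, where $p_0+T_vv$ is the first exit point on $\partial P$. Since $r_\rho(p_0+tv)=r_\rho(p_0)+t\langle v,u_\rho\rangle$ is affine in $t$ and starts above $\epsilon_\rho$, each summand $q_{\epsilon_\rho}(r_\rho(p_0+tv))$ is identically zero when $\langle v,u_\rho\rangle\geq 0$ and non-decreasing when $\langle v,u_\rho\rangle<0$ (zero until $r_\rho$ crosses $\epsilon_\rho$, then strictly increasing by Lemma \ref{LemmaBumpProperties}(2)); hence $f_v$ is non-decreasing with $f_v(0)=0$. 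At any $t_0\in(0,T_v)$ with $f_v(t_0)>0$, positivity forces $r_{\rho_0}(p_0+t_0v)\in(0,\epsilon_{\rho_0})$ for some ray $\rho_0$, whence $\langle v,u_{\rho_0}\rangle<0$ and $q'_{\epsilon_{\rho_0}}(r_{\rho_0}(p_0+t_0v))<0$; in
$$f_v'(t_0)=\sum_{\rho\in\Sigma(1)}q'_{\epsilon_\rho}(r_\rho(p_0+t_0v))\langle v,u_\rho\rangle$$
the same case split makes every summand non-negative and the $\rho_0$-summand strictly positive, so $f_v'(t_0)>0$.

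Since $f_v(0)=0$, $f_v(T_v)\geq 1>\delta$, and $f_v$ is strictly increasing at every positive value it attains in $(0,1)$, there is a unique $t^*(v)\in(0,T_v)$ with $f_v(t^*(v))=\delta$. The implicit function theorem, using $f_v'(t^*(v))>0$, makes $t^*:S^{n-1}\to(0,\infty)$ smooth, so $v\mapsto p_0+t^*(v)v$ is a diffeomorphism from $S^{n-1}$ onto $h^{-1}(\delta)$ with inverse $m\mapsto(m-p_0)/\|m-p_0\|$; the same radial derivative bound gives $\nabla h(m)\neq 0$ at every $m\in h^{-1}(\delta)$, so $\delta$ is a regular value and the level set is a smooth hypersurface. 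The hard part is this regularity statement: because a complete fan $\Sigma$ admits nontrivial positive linear relations $\sum c_\rho u_\rho=0$ among its primitive generators, the raw gradient $\nabla h=\sum q'_{\epsilon_\rho}(r_\rho)u_\rho$ could a priori vanish even though all $q'_{\epsilon_\rho}\leq 0$; anchoring at $p_0\in\operatorname{int}(P^-)$ eliminates this degeneration by ensuring each active ray is ``approached from outside'' along the radial direction, turning the ambiguous-sign vector sum into the manifestly positive scalar sum above.
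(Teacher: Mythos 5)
Your proof is correct and takes essentially the same route as the paper: both radially foliate a punctured neighborhood of the zero level set, establish strict monotonicity of $h$ along rays via the sign of $\langle v,u_\rho\rangle$ versus $q'_{\epsilon_\rho}\le 0$, and invoke the implicit function theorem. Your version is slightly cleaner in anchoring at an arbitrary $p_0\in\operatorname{int}(h^{-1}(0))$ rather than at the origin, and in making explicit the (tacit in the paper) hypothesis that $\operatorname{int}(h^{-1}(0))\neq\emptyset$, which the paper only secures under $\epsilon_\rho\le -r\phi(u_\rho)$.
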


\begin{proof}
Denote $S^{n-1}$ and $D^n$ the sets of $m\in \RR^n$ with $|m|=1$ and $|m|\leq 1$ respectively.
For any direction $v\in S^{n-1}$, denote $\gamma_v(t)=tv$ for $t\in [0,\infty)$
the ray emanating from the origin (which is always contained in the interior of
$P_{r\phi}$) with direction $v$. Since $P_{r\phi}$ is convex, one has
$\gamma_v^{-1}(P_{r\phi})=[0,L_v]$ for some constant $L_v> 0$. We claim that for
any $\delta\in (0,1)$ and $v\in S^{n-1}$ the intersection $h_{r\phi,\epsilon}^{-1}(\delta)\cap \gamma_v([0,L_v])$
consists of a single point. To see this, consider the derivative
$$(h_{r\phi,\epsilon}(\gamma_v(t)))' = (\nabla h_{r\phi,\epsilon})(\gamma_v(t))\cdot v \quad ;$$
since the partial derivative of $h_{r\phi,\epsilon}$ with respect to the
$k$-th coordinate of $m=(m^{(1)},\ldots ,m^{(n)})$ is
$$\frac{\partial}{\partial m^{(k)}}h_{r\phi,\epsilon} = \sum_{\rho\in\Sigma(1)}
q'_{\epsilon_\rho}(r_\rho(m))\frac{\partial}{\partial m^{(k)}}(r_\rho(m))=
\sum_{\rho\in\Sigma(1)}q'_{\epsilon_\rho}(r_\rho(m))u_\rho^{(k)}$$
one gets
$$(h_{r\phi,\epsilon}(\gamma_v(t)))'=\sum_{k=1}^n\left(\sum_{\rho\in\Sigma(1)}q'_{\epsilon_\rho}(r_\rho(\gamma_v(t)))u_\rho^{(k)}\right)v^{(k)}
=\sum_{\rho\in\Sigma(1)}q'_{\epsilon_\rho}(r_\rho(\gamma_v(t)))\langle v, u_\rho\rangle \quad .$$
Now recall that $q'_{\epsilon_\rho}\leq 0$ by Lemma \ref{LemmaBumpProperties}, and observe
that $q'_{\epsilon_\rho}(r_\rho(\gamma_v(t)))\neq 0$ implies $\langle v,u_\rho \rangle < 0$.
Indeed in this case $0\leq r_{\epsilon_\rho}(\gamma_v(t)) < \epsilon_\rho$
and $\gamma_v(t)$ must form an obtuse angle with $u_\rho$, which is normal to the facet
$F_\rho \subset \partial P_{r\phi}$ and points inside the polytope. In conclusion, for
each $v\in S^{n-1}$ the function $h_{r\phi,\epsilon}(\gamma_v(t))$ is increasing on $[0,L_v]$
and strictly increasing away from $\gamma_v([0,L_v])\cap h_{r\phi,\epsilon}^{-1}(0)$;
by Lemma \ref{LemmaBumpAndPolytope} the latter set is a proper closed subinterval of $[0,L_v]$,
which contains the origin if $\epsilon_\rho\leq r$ for all $\rho\in\Sigma(1)$. From this
discussion it follows that the function $\psi(m)=m/|m|$ restricts
for each $\delta\in (0,1)$ to a continuous bijection $\psi_\delta = \psi_{|h_{r\phi,\epsilon}^{-1}(\delta)}$
between a compact set and the Hausdorff space $S^{n-1}$, and thus it is a homeomorphism.
The level set $h_{r\phi,\epsilon}^{-1}(\delta)$ is a smooth manifold by the implicit
function theorem, since the gradient of $h_{r\phi,\epsilon}$ never vanishes along it
by the calculation above.
\end{proof}

\begin{remark}
The nonempty level sets $h_{r\phi_\epsilon}^{-1}(\delta)$ with $\delta\geq	1$ are also
smooth manifolds by the arguments of Proposition \ref{PropLevelSpheres}, but they are not
necessarily compact anymore; see Figure \ref{FigLevelSets} for some examples.
\end{remark}

\begin{remark}\label{RmkExoticSpheres}
If $n\neq 5$ the $h$-cobordism theorem implies that each level set $h_{rD_\Sigma,\epsilon}^{-1}(\delta)$
is diffeomorphic to the standard sphere $S^{n-1}$. However, we do not know if the statement
holds for $n=5$. Note that if an exotic $S^4$ exists, then it has a smooth embedding
in $\RR^5$; see Colding-Minicozzi-Pedersen \cite{CMP}.
\end{remark}

\subsection{The Hamiltonians}

Following Fulton \cite[Section 4.2]{Fu}, define the moment map $\mu_{r\phi}:(\CC^\times)^n\to\RR^n$
of the toric variety $X(\Sigma)$ with polarization $D_{r\phi}$ as
$$\mu_{r\phi}=\frac{1}{\sum_{m\in P_{r\phi}\cap\ZZ^n}|\chi^m|^2}
\sum_{m\in P_{r\phi}\cap\ZZ^n}|\chi^m|^2m \quad .$$
The image of this map is the interior of $P_{r\phi}$, and the real torus
$(S^1)^n\subset (\CC^\times)^n$ acts freely and transitively on its fibers.
Endowing $(\CC^\times)^n$ with the symplectic structure $\omega_{r\phi}$ of Section \ref{SecPotential},
one can check that the action of the real torus is Hamiltonian, and that
$\mu_{r\phi}$ is a moment map in the sense of symplectic topology as follows.
The primitive generator $u_\rho$ of each ray $\rho\in\Sigma(1)$
defines a one-parameter subgroup
$$\lambda_{u_\rho}:\CC^\times\to(\CC^\times)^n \quad , \quad
\lambda_{u_\rho}(t)=(t^{u_\rho^{(1)}},\ldots ,t^{u_\rho^{(n)}}) \quad ;$$
denote $S_{u_\rho}=\lambda_{u_\rho}(S^1)$ the corresponding circle subgroup and
$$X_{u_\rho}(z)=\frac{d}{d\alpha}\vert_{\alpha=0}\lambda_{u_\rho}(e^{i\alpha})\cdot z \quad \textrm{for} \; z\in (\CC^\times)^n \quad .$$

\begin{definition}\label{DefInfinitesimalAction}
The vector field $X_{u_\rho}$ on $(\CC^\times)^n$ is called infinitesimal action the
circle subgroup $S_{u_\rho}$.
\end{definition}

\begin{lemma}\label{LemmaInfinitesimalAction}
Writing $u_\rho\in\ZZ^n$ as $\sum_{l=1}^nu_\rho^{(l)}e_l$
one has $X_{u_\rho}=\sum_{l=1}^nu_\rho^{(l)}X_{e_l}$, where
$$X_{e_l}(z)=\frac{ z_l+\overline{z}_l }{ 2 } i \left( \frac{ \partial }{ \partial z_l }-\frac{ \partial }{ \partial\overline{z}_l } \right)
-\frac{ z_l-\overline{z}_l }{ 2i } \left( \frac{ \partial }{ \partial z_l }+\frac{ \partial }{ \partial\overline{z}_l } \right) \quad .$$
\end{lemma}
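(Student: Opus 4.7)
The plan is to compute $X_{u_\rho}$ directly from its defining flow, express the resulting vector field in terms of the Wirtinger derivatives $\partial/\partial z_l$ and $\partial/\partial \overline{z}_l$, and then verify that the claimed formula for $X_{e_l}$ matches by a short algebraic manipulation. Since both sides of the desired identity depend on $u_\rho$ only through the coefficients $u_\rho^{(l)}$, the linearity claim $X_{u_\rho}=\sum_l u_\rho^{(l)}X_{e_l}$ will be automatic once $X_{e_l}$ is identified.

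First I would unwind the definitions: the action of $\lambda_{u_\rho}(e^{i\alpha})$ on $z=(z_1,\ldots,z_n)\in(\CC^\times)^n$ is componentwise multiplication, giving the flow $\alpha\mapsto(e^{i\alpha u_\rho^{(1)}}z_1,\ldots,e^{i\alpha u_\rho^{(n)}}z_n)$. Differentiating at $\alpha=0$ shows that $X_{u_\rho}$ acts on the coordinate function $z_l$ by $i u_\rho^{(l)}z_l$ and on $\overline{z}_l$ by $-iu_\rho^{(l)}\overline{z}_l$. Since a real vector field on a complex manifold is determined by its action on holomorphic and antiholomorphic coordinates, this yields
$$X_{u_\rho}=\sum_{l=1}^n u_\rho^{(l)}\left(iz_l\frac{\partial}{\partial z_l}-i\overline{z}_l\frac{\partial}{\partial\overline{z}_l}\right) \quad .$$
Setting $u_\rho=e_l$ recovers the explicit form of $X_{e_l}$ I need to compare against.

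The only remaining point is to check that the expression in the statement equals $iz_l\,\partial/\partial z_l-i\overline{z}_l\,\partial/\partial\overline{z}_l$. I would expand the right-hand side of the claimed formula, use $-1/(2i)=i/2$ to rewrite the second summand as $\tfrac{i(z_l-\overline{z}_l)}{2}(\partial/\partial z_l+\partial/\partial\overline{z}_l)$, and then collect the coefficients of $\partial/\partial z_l$ and $\partial/\partial\overline{z}_l$; the terms involving $\overline{z}_l$ in the coefficient of $\partial/\partial z_l$ cancel and symmetrically the terms involving $z_l$ in the coefficient of $\partial/\partial\overline{z}_l$ cancel, leaving exactly the desired expression. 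Equivalently, one can observe that under the identification $z_l=x_l+iy_l$ one has $(z_l+\overline{z}_l)/2=x_l$, $(z_l-\overline{z}_l)/(2i)=y_l$, $i(\partial/\partial z_l-\partial/\partial\overline{z}_l)=\partial/\partial y_l$ and $\partial/\partial z_l+\partial/\partial\overline{z}_l=\partial/\partial x_l$, so the formula reduces to the standard generator $x_l\,\partial/\partial y_l-y_l\,\partial/\partial x_l$ of the rotation action on the $l$-th factor.

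There is no serious obstacle here, as the lemma is essentially a coordinate-change bookkeeping: the only place to slip is in the signs when converting between $1/i$ and $-i$, so I would write out the intermediate step with $\tfrac{i(z_l-\overline{z}_l)}{2}$ explicitly to keep the signs transparent.
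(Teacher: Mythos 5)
Your proof is correct and takes essentially the same route as the paper: differentiate the flow of $\lambda_{u_\rho}(e^{i\alpha})$ at $\alpha=0$, then match coordinates. The only cosmetic difference is that you work in Wirtinger coordinates throughout, obtaining $X_{u_\rho}=\sum_l u_\rho^{(l)}\bigl(iz_l\,\partial/\partial z_l-i\overline{z}_l\,\partial/\partial\overline{z}_l\bigr)$ directly, whereas the paper first passes through real coordinates $x_l,y_l$ to get $x_l\,\partial/\partial y_l-y_l\,\partial/\partial x_l$ — the equivalent conversion you note at the end of your argument.
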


\begin{proof}
For any $1\leq l\leq n$ write $z_l=x_l+iy_l$, so that
$$\lambda_{u_\rho}(e^{i\alpha})\cdot z = (e^{i\alpha u^{(1)}}x_1,\ldots ,e^{i\alpha u^{(1)}}x_n)
+i(e^{i\alpha u^{(1)}}y_1,\ldots ,e^{i\alpha u^{(1)}}y_n) \quad .$$
Differentiating at $\alpha=0$ and using $i\frac{\partial}{\partial x_l}=\frac{\partial}{\partial y_l}$ one
gets
$$X_{u_\rho}(z)=u^{(1)}\left(x_1\frac{\partial}{\partial y_1}-y_1\frac{\partial}{\partial x_1}\right)
+\cdots +u^{(n)}\left(x_n\frac{\partial}{\partial y_n}-y_n\frac{\partial}{\partial x_n}\right) \quad .$$
The claim follows from the fact that
$$x_l\frac{\partial}{\partial y_l}-y_l\frac{\partial}{\partial x_l}=
\frac{ z_l+\overline{z}_l }{ 2 } i \left( \frac{ \partial }{ \partial z_l }-\frac{ \partial }{ \partial\overline{z}_l } \right)
-\frac{ z_l-\overline{z}_l }{ 2i } \left( \frac{ \partial }{ \partial z_l }+\frac{ \partial }{ \partial\overline{z}_l } \right) \quad .$$
\end{proof}

\begin{proposition}\label{PropositionHamiltonianAction}
The following properties hold:
\begin{enumerate}
	\item $\omega_{r\phi}(X_{u_\rho},-)=-d\langle \mu_{r\phi},u_\rho\rangle$ for all $\rho\in\Sigma(1)$ ;
	\item $\theta_{r\phi}$ is invariant under the action of the real torus $(S^1)^n\subset (\CC^\times)^n$ .
\end{enumerate}
\end{proposition}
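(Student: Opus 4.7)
The plan is to prove part (2) first and then deduce part (1) from it via Cartan's magic formula. For (2), observe that the K\"{a}hler potential $F_{r\phi} = \log \sum_{m\in P_{r\phi}\cap\ZZ^n} |\chi^m|^2$ is manifestly invariant under the real torus action, since each $|\chi^m|^2 = |z_1|^{2m^{(1)}}\cdots |z_n|^{2m^{(n)}}$ depends only on the moduli $|z_k|$. The action of $(S^1)^n \subset (\CC^\times)^n$ is by holomorphic automorphisms, and $d^c = -d(\cdot)\circ J$ commutes with pullback along any holomorphic map because the differential of such a map is $J$-linear. Therefore $\theta_{r\phi} = \tfrac{1}{2} d^c F_{r\phi}$ is $(S^1)^n$-invariant, proving (2).

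For (1), invariance of $\theta_{r\phi}$ under the flow of $X_{u_\rho}$ (a one-parameter subgroup of $(S^1)^n$ since $u_\rho\in\ZZ^n$) yields $L_{X_{u_\rho}} \theta_{r\phi} = 0$. Cartan's magic formula then reads
$$0 = L_{X_{u_\rho}} \theta_{r\phi} = d(\iota_{X_{u_\rho}} \theta_{r\phi}) + \iota_{X_{u_\rho}}\omega_{r\phi} \quad ,$$
so $\iota_{X_{u_\rho}} \omega_{r\phi} = -d(\iota_{X_{u_\rho}} \theta_{r\phi})$. It is therefore enough to establish $\iota_{X_{u_\rho}} \theta_{r\phi} = \langle \mu_{r\phi}, u_\rho\rangle$, even only up to an additive constant, since such a constant vanishes under $d$.

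The remaining step is a direct evaluation. By Lemma \ref{LemmaInfinitesimalAction} one has $X_{u_\rho} = \sum_l u_\rho^{(l)} X_{e_l}$, and a short algebraic simplification of the formula given there gives $X_{e_l} = i z_l \partial_{z_l} - i \bar{z}_l \partial_{\bar{z}_l}$. Pairing $X_{e_l}$ against $dz_k/z_k$ and $d\bar{z}_k/\bar{z}_k$ yields $i\delta_{kl}$ and $-i\delta_{kl}$ respectively, so substituting into the explicit formula for $\theta_{r\phi}$ recorded in Section \ref{SecPotential} and identifying the coefficient of $(dz_k/z_k - d\bar{z}_k/\bar{z}_k)$ as $-\tfrac{i}{2}\mu_{r\phi}^{(k)}$ produces $\iota_{X_{e_l}} \theta_{r\phi} = \mu_{r\phi}^{(l)}$, hence $\iota_{X_{u_\rho}} \theta_{r\phi} = \langle \mu_{r\phi}, u_\rho\rangle$ by linearity.

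There is no substantial obstacle along this route; the only hazard is the bookkeeping of the factor $-i/2$ and the sign convention relating $d^c$ to $d^{\CC}$. The Cartan-based approach has the additional virtue of bypassing the more delicate distortion-formula analysis (namely the verification $C_\rho = 0$ via the wrapping-averaging formula) that the introduction reserves for the proof of the contact type theorem, where the additive-constant ambiguity must actually be ruled out.
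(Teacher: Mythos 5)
Your proof is correct, and it takes a genuinely different route from the paper's for part (1). The paper proves (1) by direct coordinate computation: it evaluates $\omega_{r\phi}(X_{e_l},-)$ using the explicit complex-coordinate formula for $\omega_{r\phi}$ (introducing auxiliary coefficients $c_{hk}$), separately computes $d\langle\mu_{r\phi},e_l\rangle$ in the same coordinates, and matches the two. You instead prove (2) first (by the same argument as the paper: invariance of $F_{r\phi}$ plus holomorphicity of the $(S^1)^n$-action, so $d^c$ commutes with pullback), and then deduce (1) from (2) via Cartan's formula, which reduces the claim to the single much lighter computation $\iota_{X_{e_l}}\theta_{r\phi}=\mu_{r\phi}^{(l)}$, using only the formula for $\theta_{r\phi}$ rather than the bulkier one for $\omega_{r\phi}$. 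That computation checks out: simplifying Lemma \ref{LemmaInfinitesimalAction} indeed gives $X_{e_l}=iz_l\partial_{z_l}-i\bar z_l\partial_{\bar z_l}$, so $(dz_k/z_k-d\bar z_k/\bar z_k)(X_{e_l})=2i\delta_{kl}$ and the $-i/2$ prefactor yields exactly $\mu_{r\phi}^{(l)}$.

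A noteworthy consequence you point out correctly: your computation establishes the exact identity $\theta_{r\phi}(X_{u_\rho})=\langle\mu_{r\phi},u_\rho\rangle$, not merely its differential. That is precisely the combined content of Lemma \ref{LemmaDistortionFormula} and Lemma \ref{LemmaDistortionConstant}, which the paper proves later by a longer route through the wrapping-averaging formula (Proposition \ref{PropWrappingAverageFormula}). Your approach would render those lemmas redundant for the proof of Theorem \ref{ThmContactType}; the paper presumably keeps the wrapping-averaging machinery because it has independent interest (Corollary \ref{CorInfinitesimalWrapping}, Corollary \ref{CorExactTorus}), but as a logical matter your shortcut is sound. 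The one place to be careful, which you flag, is the sign convention $d^c=-df\circ J$ versus $d^{\CC}=df\circ J$; with the paper's convention, the factor $-i/2$ in the formula for $\theta_{r\phi}$ gives the sign you need.
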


\begin{proof}
(1) By linearity and Lemma \ref{LemmaInfinitesimalAction} it suffices to check that $\omega_{r\phi}(X_{e_l},-)=-d\langle \mu_{r\phi},e_l\rangle$ for $1\leq l\leq n$.
Observe that $dz_h(X_{e_l})=i\delta_{hl}z_h$ and $d\overline{z}_h(X_{e_l})=-i\delta_{hl}\overline{z}_h$,
so that
$$\left( \frac{dz_h}{z_h}\wedge\frac{d\overline{z}_k}{\overline{z}_k} \right)(X_{e_l},-)=
i\left( \delta_{kl}\frac{z_h}{z_h} + \delta_{hl} \frac{\overline{z}_k}{\overline{z}_k}\right) \quad .$$
For $1\leq h,k\leq n$ define the function
$$c_{hk}=\frac{\sum_{m\in P_{r\phi}\cap\ZZ^n}m^{(h)}m^{(k)}|\chi^m|^2}{\sum_{m\in P_{r\phi}\cap\ZZ^n}|\chi^m|^2}
-\frac{\left( \sum_{m\in P_{r\phi}\cap\ZZ^n}m^{(h)}|\chi^m|^2 \right)\left( \sum_{m\in P_{r\phi}\cap\ZZ^n}m^{(k)}|\chi^m|^2 \right)}
{\left( \sum_{m\in P_{r\phi}\cap\ZZ^n}|\chi^m|^2 \right)^2} \quad .$$
Using the formula for $\omega_{r\phi}$ in complex coordinates from Section \ref{SecPotential}
one gets
$$\omega_{r\phi}(X_{e_l},-)=-\sum_{1\leq h<k\leq n}c_{hk}
\left( \delta_{kl}\left(\frac{dz_h}{z_h}+\frac{d\overline{z}_h}{\overline{z}_h}\right)
+ \delta_{hl}\left(\frac{dz_k}{z_k}+\frac{d\overline{z}_k}{\overline{z}_k}\right) \right)$$
$$-\sum_{k=1}^nc_{kk}\delta_{kl}\left( \frac{dz_k}{z_k}+\frac{d\overline{z}_k}{\overline{z}_k} \right) \quad .$$
To compute $d\langle \mu_{r\phi},e_l\rangle$ in complex coordinates, observe that
for any $1\leq k\leq n$ one has
$$\frac{\partial}{\partial z_k}|\chi^m|^2 = \frac{m^{(k)}}{z_k}|\chi^m|^2 \quad \textrm{and}
\quad \frac{\partial}{\partial \overline{z}_k}|\chi^m|^2 = \frac{m^{(k)}}{\overline{z}_k}|\chi^m|^2 \quad ,$$
so that using $d=\partial + \overline{\partial}$ one gets
$$d\langle \mu_{r\phi} , e_l\rangle = \sum_{k=1}^nc_{lk}\left( \frac{dz_k}{z_k} + \frac{d\overline{z}_k}{\overline{z}_k}\right) \quad .$$
The desired equality follows by comparing the formulas in complex coordinates just found.\\
(2) Recall that $\theta_{r\phi}=\frac{1}{2}d^cF_{r\phi}$, where $F_{r\phi}$ is
the K\"{a}hler potential induced by the divisor $rD_\phi$ from Definition \ref{DefInducedPotential}.
Denoting $\phi_t$ the diffeomorphism of $(\CC^\times)^n$ given by the action of $t\in (S^1)^n$,
one has
$$\phi_t^*d^cF_{r\phi}=-dF_{r\phi}\circ J_{(\CC^\times)^n}\circ d\phi_t =
-dF_{r\phi}\circ d\phi_t\circ J_{(\CC^\times)^n} = -d(F_{r\phi}\circ \phi_t)\circ J_{(\CC^\times)^n}
=d^cF_{r\phi} \quad ,$$
where we have used that the action of $(S^1)^n$ is holomorphic and that $F_{r\phi}$
is invariant under it, since the modulus of each character $|\chi^m|$ is.
\end{proof}

\begin{remark}
Since $d\langle \mu_{r\phi},u_\rho\rangle= d(r_\rho\circ \mu_{r\phi})$, it follows
from Proposition \ref{PropositionHamiltonianAction} that $X_{u_\rho}$ is the Hamiltonian
vector field associated to the moment map lift $R_\rho=\mu_{r\phi}^*r_\rho$ of the radial
coordinate of Definition \ref{DefRadialCoordinate}.
\end{remark}

One can use the moment map to lift the smoothing function constructed earlier to an
$(S^1)^n$-invariant function on $(\CC^\times)^n$.

\begin{definition}\label{DefPolyhedralHamiltonian}
If $\Sigma$ is a complete fan of cones in $\RR^n$ and $\phi:\RR^n\to\RR$ is a strictly
concave rational PL function on $\Sigma$, for any $r\in\NN^+$ such that $r\phi$ is an
integral PL function call polyhedral Hamiltonian the function $H_{r\phi,\epsilon} = h_{r\phi,\epsilon}\circ \mu_{r\phi}$ .
\end{definition}

Since the moment fibers are Lagrangian, the nonempty level sets $H_{r\phi}^{-1}(\delta)$
with $\delta\in (0,\infty)$ are fibered by Lagrangian tori, and when $\delta\in (0,1)$
they have topology $S^{n-1}\times (S^1)^n$ by Proposition \ref{PropLevelSpheres}.

\section{Lattice points and periodic orbits}\label{SecOrbits}

We wish to understand the dynamics of the vector field $X_{H_{r\phi,\epsilon}}$
defined by the equation $-dH_{r\phi,\epsilon} = \omega_{r\phi}(X_{H_{r\phi,\epsilon}},-)$,
in particular its periodic orbits.

\subsection{Collar at infinity}

For each cone $\sigma\in\Sigma$ and $\epsilon\in\RR_{>0}^{\Sigma(1)}$ define the set
$$U_\sigma^\epsilon = \{ \; z\in (\CC^\times)^n \; : \; 0<r_\rho(\mu_{r\phi}(z))<\epsilon_\rho
 \; \textrm{for all} \; \rho\in\sigma(1) \; \} \quad .$$
These open sets give rise to locally closed sets
$$S_\sigma^\epsilon = U_\sigma^\epsilon \cap \left( \bigcap\limits_{\rho\in\Sigma(1)\setminus\sigma(1)}(U_\rho^\epsilon)^c \right)$$
for any $\sigma\in\Sigma$. 
 
\begin{definition}\label{DefCollarOfInfinity}
The collection of sets $\{S_\sigma^\epsilon\}_{\sigma\in\Sigma}$ is called collar at infinity
of shape $\Sigma$ and size $\epsilon$, and each set is called a stratum.
\end{definition}

The strata form a partition of $(\CC^\times)^n$, and each of them is invariant under the
action of the real torus $(S^1)^n\subset(\CC^\times)^n$. There is only one closed stratum,
corresponding to the trivial cone $\sigma=\{0\}$; this is also the zero
set of the polyhedral Hamiltonian $S_{\{0\}}^\epsilon=H_{r\phi,\epsilon}^{-1}(0)$ and thus
it is a union of constant orbits of $X_{H_{r\phi,\epsilon}}$. The union of all other strata
is an open set, which can be thought of as a neighborhood of infinity. 
The image of this union under the moment map
$\mu_{r\phi}$ resembles a collar of the boundary of the section polytope
$\partial P_{r\phi}$, hence the name collar at infinity.

\begin{remark}
The strata do not meet the frontier condition, i.e. it is
not true that $\overline{S_\sigma^\epsilon}\cap S_{\sigma'}^\epsilon\neq\emptyset$
implies $S_{\sigma'}^\epsilon\subseteq S_{\sigma}^\epsilon$ in general.
\end{remark}

\subsection{Dynamics in a stratum}

The dynamics of $X_{H_{r\phi,\epsilon}}$ on strata $S_\sigma^\epsilon$ with $\sigma\neq \{0\}$
can be described as follows.

\begin{proposition}\label{PropSmoothingVectorField}
If $z\in S_\sigma^\epsilon$ then $X_{H_{r\phi,\epsilon}}(z)=\sum_{\rho\in\sigma(1)}q'_{\epsilon_\rho}(r_\rho(\mu_{r\phi}(z)))X_{u_\rho}(z)$ .
\end{proposition}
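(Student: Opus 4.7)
The plan is a direct computation: first expand $dH_{r\phi,\epsilon}$ via the chain rule, then use Proposition \ref{PropositionHamiltonianAction} to replace each differential $dR_\rho$ by the contraction of $\omega_{r\phi}$ with the infinitesimal action $X_{u_\rho}$, and finally use nondegeneracy of $\omega_{r\phi}$ together with the stratum condition to truncate the sum to indices $\rho\in\sigma(1)$.

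More precisely, writing $R_\rho=r_\rho\circ \mu_{r\phi}$ and using the chain rule on $H_{r\phi,\epsilon}=\sum_{\rho\in\Sigma(1)}q_{\epsilon_\rho}\circ R_\rho$, I get
$$dH_{r\phi,\epsilon}(z)=\sum_{\rho\in\Sigma(1)}q'_{\epsilon_\rho}(R_\rho(z))\,dR_\rho(z).$$
Since $R_\rho=\langle \mu_{r\phi},u_\rho\rangle-r\phi(u_\rho)$ differs from $\langle \mu_{r\phi},u_\rho\rangle$ by a constant, Proposition \ref{PropositionHamiltonianAction}(1) gives $-dR_\rho=\omega_{r\phi}(X_{u_\rho},-)$ (this is also recorded in the remark preceding Definition \ref{DefPolyhedralHamiltonian}). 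Substituting yields
$$-dH_{r\phi,\epsilon}(z)=\omega_{r\phi}\!\left(\sum_{\rho\in\Sigma(1)}q'_{\epsilon_\rho}(R_\rho(z))X_{u_\rho}(z),-\right),$$
so by definition of $X_{H_{r\phi,\epsilon}}$ and nondegeneracy of $\omega_{r\phi}$ the vector field equals the sum on the right.

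What remains is to show that the terms with $\rho\notin \sigma(1)$ drop out when $z\in S^\epsilon_\sigma$. By definition of $S^\epsilon_\sigma$, for such $\rho$ the point $z$ lies in $(U^\epsilon_\rho)^c$; since $\mu_{r\phi}(z)\in\operatorname{int}(P_{r\phi})$ forces $R_\rho(z)>0$, the only way to be outside $U^\epsilon_\rho$ is to have $R_\rho(z)\geq \epsilon_\rho$. The explicit formula for $q_{\epsilon_\rho}$ in Definition \ref{DefBumpFunction} shows $q_{\epsilon_\rho}$ and all its derivatives extend smoothly by zero across $x=\epsilon_\rho$ (the exponent blows up to $-\infty$), so $q'_{\epsilon_\rho}$ vanishes identically on $[\epsilon_\rho,\infty)$, as also recorded in Lemma \ref{LemmaBumpProperties}(2). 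Hence $q'_{\epsilon_\rho}(R_\rho(z))=0$ for all $\rho\notin\sigma(1)$, and the sum collapses to the asserted formula.

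The only subtle point is the observation in the final paragraph: the stratum $S^\epsilon_\sigma$ is merely locally closed, so one must justify pointwise evaluation of $dH_{r\phi,\epsilon}$ at $z\in S^\epsilon_\sigma$ without appealing to an open neighborhood on which the same truncation holds. This is handled precisely by the smoothness of $q_{\epsilon_\rho}$ at $x=\epsilon_\rho$, which guarantees that the value of $q'_{\epsilon_\rho}(R_\rho(z))$ is $0$ whenever $R_\rho(z)\geq \epsilon_\rho$, regardless of what $R_\rho$ does in a neighborhood of $z$. With this in place the whole proof is a few lines of bookkeeping.
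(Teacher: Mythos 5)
Your proof is correct and follows essentially the same route as the paper's: expand $dH_{r\phi,\epsilon}$ by the chain rule, convert $-dR_\rho$ to $\iota_{X_{u_\rho}}\omega_{r\phi}$ via Proposition \ref{PropositionHamiltonianAction}, and invoke nondegeneracy of $\omega_{r\phi}$. The one small difference is in the order of operations: the paper first replaces the sum defining $H_{r\phi,\epsilon}$ by its restriction to $\rho\in\sigma(1)$ (an equality valid only on the locally closed set $S_\sigma^\epsilon$) and then differentiates, which technically requires justification since $S_\sigma^\epsilon$ need not be open; you instead differentiate the full sum over $\Sigma(1)$ and then kill the terms with $\rho\notin\sigma(1)$ pointwise using $q'_{\epsilon_\rho}(R_\rho(z))=0$ for $R_\rho(z)\geq\epsilon_\rho$. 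Your ordering is the cleaner one, and your closing remark correctly identifies the subtlety that the paper's presentation leaves implicit.
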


\begin{proof}
By Definition \ref{DefPolyhedralHamiltonian} of smoothing Hamiltonian
$$H_{r\phi,\epsilon}(z)=\sum_{\rho\in\Sigma(1)}q_{\epsilon_\rho}(r_\rho(\mu_{r\phi}(z)))=
\sum_{\rho\in\sigma(1)}q_{\epsilon_\rho}(r_\rho(\mu_{r\phi}(z))) \quad ,$$
where the second equality holds because $z\in S_\sigma^\epsilon$ implies
$r_\rho(\mu_{r\phi}(z))\geq \epsilon_\rho$ for all $\rho\in\Sigma(1)\setminus\sigma(1)$,
hence $q_{\epsilon_\rho}(r_\rho(\mu_{r\phi}(z)))=0$. Differentiating at $z\in S_\sigma^\epsilon$
one gets
$$d_zH_{r\phi,\epsilon}=\sum_{\rho\in\sigma(1)}q_{\epsilon_\rho}'(r_\rho(\mu_{r\phi}(z)))
(d_{\mu_{r\phi}(z)}r_\rho) \circ (d_z\mu_{r\phi})=$$
$$
\sum_{\rho\in\sigma(1)}q_{\epsilon_\rho}'(r_\rho(\mu_{r\phi}(z)))
\langle d_z\mu_{r\phi}, u_\rho\rangle = -
\sum_{\rho\in\sigma(1)}q_{\epsilon_\rho}'(r_\rho(\mu_{r\phi}(z)))
(\omega_{r\phi})_z(X_{u_\rho}(z),\cdot) \quad ,$$
where the last equality holds by Proposition \ref{PropositionHamiltonianAction}. The desired formula
follows from the non-degeneracy of the symplectic structure $\omega_{r\phi}$.
\end{proof}

Observe that the vector $X_{u_\rho}(z)$ is tangent to
$\mu_{r\phi}^{-1}(\mu_{r\phi}(z))$, because moment fibers are invariant
under the action of the real torus $(S^1)^n\subset (\CC^\times)^n$. It follows
from Proposition \ref{PropSmoothingVectorField} that $X_{H_{r\phi,\epsilon}}$ is
tangent to the the moment fibers, and in each of them its dynamics is that of a linear
flow on a torus, with slope depending on the particular fiber.

\subsection{Rational slopes}

For each cone $\sigma\in \Sigma$, consider the set
$$L_\sigma = \{ \; d\in\QQ_{>0}^{\sigma(1)} \; : \; \sum_{\rho\in\sigma(1)}d_\rho u_\rho\in\sigma\cap\ZZ^n \; \}$$
of positive rational tuples giving lattice combinations of the primitive generators
$u_\rho$ of rays $\rho\in\sigma(1)$. The set $L_\sigma$ is closed under the sum operation.

\begin{definition}\label{DefSlopeSemigroup}
Call $L_\sigma\subset \QQ_{>0}^{\sigma(1)}$ the slope semigroup of $\sigma\in\Sigma$.
\end{definition}

Observe that if $\sigma$ is a smooth cone, then each vector of $\sigma\cap\ZZ^n$ is a unique
integer combination of the vectors $u_\rho$ with $\rho\in\Sigma(1)$, and
thus $L_\sigma\subset \ZZ_{> 0}^{\sigma(1)}$ in that case. The map
$$c_\sigma : \QQ_{>0}^{\sigma(1)}\to\RR^n \quad , \quad c_\sigma(d)=\sum_{\rho\in\sigma(1)}d_\rho u_\rho$$
has image $c_\sigma(L_\sigma)= \operatorname{int}(\sigma)\cap\ZZ^n$, and
is an isomorphism of semigroups with its image when $\sigma$ is a simplicial cone. When
$\sigma$ is not simplicial, $L_\sigma$ can be larger because vectors of $\sigma\cap\ZZ^n$
could be written as a rational combination of the vectors $u_\rho$ with $\rho\in\sigma(1)$
in multiple ways, corresponding to elements of the fibers of $c_\sigma$.
For any $d\in L_\sigma$ introduce
$$B^\epsilon_\sigma(d) = \{ \; z\in \operatorname{int}(S_{\sigma}^\epsilon) \; : \; q'_{\epsilon_\rho}(r_\rho(\mu_{r\phi}(z)))=-d_\rho \; \textrm{for all} \; \rho\in\sigma(1) \; \} \quad .$$
Since the vector field $X_{H_{r\phi,\epsilon}}$ is tangent to the moment fibers,
its flow preserves $B^\epsilon_\sigma(d)$ and Proposition \ref{PropSmoothingVectorField} implies
that the orbits contained in it are periodic.

\begin{definition}\label{DefFamilyOfSlopeD}
Call $B^\epsilon_\sigma(d)$ a family of orbits of slope $d\in L_\Sigma$.
\end{definition}

\begin{remark}
There are many periodic orbits that are not in any family of type $B^\epsilon_\sigma(d)$. However,
the orbits of period one are all contained in these families; compare Remark \ref{RemarkWhyLatticePoints}.
\end{remark}

\begin{definition}\label{DefDynamicalSupport}
If $H_{r\phi,\epsilon}$ is the polyhedral Hamiltonian associated to the PL function
$r\phi$ on $\Sigma$ with smoothing parameter $\epsilon$, its dynamical support in $\sigma\in\Sigma$
is the set
$$\operatorname{DS}_\sigma(r\phi,\epsilon)=\{ \; u\in\operatorname{int}(\sigma)\cap\ZZ^n \; : \; \exists d\in L_\sigma \; \textrm{with} \; B^\epsilon_\sigma(d)\neq\emptyset \; \textrm{and} \; c_\sigma(d)=u \; \} \quad .$$
\end{definition}

The main result of this section is the following.

\begin{theorem}\label{ThmTopologyOfFamilies}
If $H_{r\phi,\epsilon}$ is the polyhedral Hamiltonian associated to the PL function
$r\phi$ on $\Sigma$ with smoothing parameter $\epsilon$, then for any $\sigma\in\Sigma$ the following facts hold:
\begin{enumerate}
	\item the dynamical support $\operatorname{DS}_\sigma(r\phi,\epsilon)$ is finite ;
	\item for any $d\in c_\sigma^{-1}(\operatorname{DS}_\sigma(r\phi,\epsilon))$ the family $B^\epsilon_\sigma(d)$ is a smooth manifold
		diffeomorphic to a disjoint union of thickened tori $\operatorname{int}(D^{n-\dim\sigma})\times (S^1)^n$ . 
\end{enumerate}
\end{theorem}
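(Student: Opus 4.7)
The plan is to reduce both statements to combinatorial facts about the moment polytope $P_{r\phi}$ by exploiting the $(S^1)^n$-invariance of the defining conditions.

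For part (1), the boundedness of the dynamical support follows from the shape of the bump function. On $\operatorname{int}(S_\sigma^\epsilon)$ the radial coordinate $r_\rho(\mu_{r\phi}(z))$ lies in $(0,\epsilon_\rho)$ for each $\rho\in\sigma(1)$, and by Lemma \ref{LemmaBumpProperties}(2)-(3) the derivative $q'_{\epsilon_\rho}$ takes values in $[q'_{\epsilon_\rho}(x_{\epsilon_\rho}),0)$ on this interval, with strictly negative minimum at $x_{\epsilon_\rho}$. Hence any $d\in L_\sigma$ with $B^\epsilon_\sigma(d)\neq\emptyset$ must satisfy $d_\rho\in(0,-q'_{\epsilon_\rho}(x_{\epsilon_\rho})]$, a bounded interval. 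The image $c_\sigma(d)=\sum_{\rho\in\sigma(1)}d_\rho u_\rho$ therefore ranges over a bounded subset of $\RR^n$, and $\operatorname{DS}_\sigma(r\phi,\epsilon)$ is its intersection with the discrete lattice $\ZZ^n$, hence finite.

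For part (2), I would exploit torus invariance to reduce to linear algebra in $\operatorname{int}(P_{r\phi})$. Fix $d\in c_\sigma^{-1}(\operatorname{DS}_\sigma)$ and set $\tilde{B}_d := \mu_{r\phi}(B^\epsilon_\sigma(d))$; explicitly, $\tilde{B}_d$ is the set of $m\in\operatorname{int}(P_{r\phi})$ with $q'_{\epsilon_\rho}(r_\rho(m))=-d_\rho$ for $\rho\in\sigma(1)$ and $r_\rho(m)>\epsilon_\rho$ for $\rho\notin\sigma(1)$. Since the real torus acts freely and transitively on moment fibers, $\mu_{r\phi}$ is a principal $(S^1)^n$-bundle over the contractible convex base $\operatorname{int}(P_{r\phi})$, hence trivializable, and $B^\epsilon_\sigma(d)\cong\tilde{B}_d\times(S^1)^n$ as smooth manifolds once $\tilde{B}_d$ is one. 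It therefore suffices to identify $\tilde{B}_d$ with a disjoint union of open $(n-\dim\sigma)$-dimensional balls.

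For each $\rho\in\sigma(1)$, by Lemma \ref{LemmaBumpProperties}(2)-(3) the equation $q'_{\epsilon_\rho}(r)=-d_\rho$ has at most two solutions in $(0,\epsilon_\rho)$, forming a finite set $R_\rho$. Then $\tilde{B}_d$ decomposes as a disjoint union, over tuples $v=(v_\rho)_{\rho\in\sigma(1)}\in\prod_\rho R_\rho$, of slices $A_v\cap\Omega$, where
\[
A_v:=\{m\in\RR^n:r_\rho(m)=v_\rho\text{ for all }\rho\in\sigma(1)\}
\]
is an affine subspace and $\Omega:=\operatorname{int}(P_{r\phi})\cap\{r_\rho>\epsilon_\rho:\rho\notin\sigma(1)\}$ is an open convex set. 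Since the primitive generators $\{u_\rho:\rho\in\sigma(1)\}$ span a subspace of dimension $\dim\sigma$, the defining linear system of $A_v$ is either inconsistent (giving $A_v=\emptyset$) or cuts out an affine subspace of dimension $n-\dim\sigma$; intersecting with $\Omega$ yields an open convex subset of $A_v$, either empty or diffeomorphic to $\operatorname{int}(D^{n-\dim\sigma})$. Smoothness of $\tilde{B}_d$ as a submanifold follows by realizing it as the preimage of a finite set under the constant-rank map $m\mapsto(r_\rho(m))_{\rho\in\sigma(1)}$ (whose rank equals $\dim\sigma$ everywhere).

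The main obstacle I expect is the non-simplicial case $|\sigma(1)|>\dim\sigma$: the $u_\rho$ then satisfy nontrivial linear relations, imposing consistency constraints on the sign tuples $v\in\prod_\rho R_\rho$, and enumerating which tuples produce nonempty $A_v$ requires careful bookkeeping. Fortunately this only affects the number of connected components and not the diffeomorphism type of each, so the conclusion is unchanged. A secondary subtlety is the degenerate slope $-d_\rho=q'_{\epsilon_\rho}(x_{\epsilon_\rho})$, where $|R_\rho|=1$ and $q''_{\epsilon_\rho}$ vanishes; this is sidestepped by working with $(r_\rho)_{\rho\in\sigma(1)}$ rather than with $(q'_{\epsilon_\rho}\circ r_\rho)_{\rho\in\sigma(1)}$, and Lemma \ref{LemmaBumpProperties}(5) confirms that this case is nongeneric in $\epsilon$.
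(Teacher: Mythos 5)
Your argument is correct and follows essentially the same route as the paper's proof: part (1) via the bound $d_\rho \leq -q'_{\epsilon_\rho}(x_{\epsilon_\rho})$ forcing $c_\sigma(d)$ into a bounded region of the lattice, and part (2) by pushing down to the moment polytope and identifying $\mu_{r\phi}(B^\epsilon_\sigma(d))$ as the intersection of an open convex set with a finite union of affine subspaces of codimension $\dim\sigma$. You are in fact slightly more careful than the paper on two points it glosses over — the possible inconsistency of the linear system when $\sigma$ is non-simplicial, and the double-root case $-d_\rho = q'_{\epsilon_\rho}(x_{\epsilon_\rho})$ — and you correctly observe that neither changes the diffeomorphism type of the components.
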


\begin{proof}
(1) Suppose $d\in L_\sigma$ is such that $B^\epsilon_\sigma(d)\neq\emptyset$. Picking $z\in B^\epsilon_\sigma(d)\subset S^\epsilon_\sigma$
one has $0<r_\rho(\mu_{r\phi}(z))<\epsilon_\rho$ and $q'(r_\rho(\mu_{r\phi}(z)))=-d_\rho$
for all $\rho\in\sigma(1)$. As explained in Lemma \ref{LemmaBumpProperties}, one has
$q'(r_\rho(\mu_{r\phi}(z)))\geq q'(x_{\epsilon_\rho})$ and so $d_\rho \leq -q'(x_{\epsilon_\rho})$
for all $\rho\in\sigma(1)$, so that $|c_\sigma(d)|\leq -\sum_{\rho\in\sigma(1)}q'(x_{\epsilon_\rho})|u_\rho|$.
Since this bound is independent of $d\in L_\sigma$, the dynamical support
$\operatorname{DS}_\sigma(r\phi,\epsilon)$ is bounded and discrete, hence finite.\\
(2) It suffices to prove that $\mu_{r\phi}(B^\epsilon_\sigma(d))$ is diffeomorphic to a
disjoint union of copies of $\operatorname{int}(D^{n-\dim\sigma})$. From
the properties of bump functions discussed in Lemma \ref{LemmaBumpProperties}, for any
$\rho\in\sigma (1)$ the equation $q'_{\epsilon_\rho}(x)=-d_\rho$ has two solutions
$x=a(d_\rho),b(d_\rho)$ and one can assume without loss
of generality that $0 < a(d_\rho) < x_{\epsilon_\rho} < b(d_\rho) < \epsilon_\rho$. Write
$B^\epsilon_\sigma(d) = \mu_{r\phi}(\operatorname{int}(S^\epsilon_\sigma))\cap K_\sigma(d)$ with
$$K_\sigma(d)=
\bigcap_{\rho\in\sigma(1)} ( \{ \; m\in\RR^n \; : \; r_\rho(m)=a(d_\rho) \; \} \cup
\{ \; m\in\RR^n \; : \; r_\rho(m)=b(d_\rho) \; \} ) \quad .$$
The equations $r_\rho(m)=a(d_\rho)$ and $r_\rho(m)=b(d_\rho)$ define two distinct and
parallel affine hyperplanes in $\RR^n$, with normal direction $u_\rho$, so $K_\sigma(d)$
is a disjoint union of $2^{|\sigma(1)|}$ affine subspaces of dimension $n-\dim \operatorname{span}(u_\rho : \rho\in\sigma(1) )=n-\dim \sigma$,
one for each choice function $\sigma(1)\to \{a(d_\rho),b(d_\rho)\}$. Since $\mu_{r\phi}(\operatorname{int}(S^\epsilon_\sigma))$
is an open convex set, each connected component of $\mu_{r\phi}(\operatorname{int}(S^\epsilon_\sigma))\cap K_\sigma(d)$
is diffeomorphic to the interior of a ball of dimension $n-\dim\sigma$.
\end{proof}

\subsection{Periods}

\begin{lemma}\label{LemmaLinearFlowTorus}
If $v=(a_1/b_1,\ldots ,a_n/b_n)\in\QQ^n$ with $v\neq 0$ and $\operatorname{gcd}(a_i,b_i)=1$ for all $1\leq i\leq n$,
then the periodic orbit $\gamma :\RR/T\ZZ\to (\RR/\ZZ)^n$ with $\gamma(0)=0$ and $\gamma'(t)=v$
has period $T=\frac{|\operatorname{lcm}(b_1,\ldots ,b_n)|}{|\operatorname{gcd}(a_1,\ldots ,a_n)|}$.
\end{lemma}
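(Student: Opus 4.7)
The plan is to identify the period $T$ as the smallest positive real number such that $Tv \in \ZZ^n$, then compute it directly. Since $\gamma(t) = tv + \ZZ^n$ in $(\RR/\ZZ)^n$, the orbit closes precisely when $Tv \in \ZZ^n$, and the set $G = \{T \in \RR : Tv \in \ZZ^n\}$ is a discrete subgroup of $\RR$ (discrete because $v \neq 0$), hence $G = T_0 \ZZ$ for a unique $T_0 > 0$ that equals the period. The target is $T_0 = L/g$, with $L = |\operatorname{lcm}(b_1,\ldots,b_n)|$ and $g = |\operatorname{gcd}(a_1,\ldots,a_n)|$.

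First I would check $T_0 \in G$: for each index $i$ with $a_i \neq 0$, write $T_0 \cdot (a_i/b_i) = (L/b_i)(a_i/g)$ and observe that both factors are integers, since $b_i \mid L$ and $g \mid a_i$. Denote the resulting integer $N_i = L a_i / (g b_i)$. For minimality, any $T \in G$ can be written $T = s T_0$, and $T \in G$ then amounts to $sN_i \in \ZZ$ for all $i$. Granting $\operatorname{gcd}(N_1,\ldots,N_n) = 1$, Bezout yields integers $c_i$ with $\sum_i c_i N_i = 1$, whence $s = \sum_i c_i (sN_i) \in \ZZ$; the smallest positive $T$ in $G$ is therefore $T_0$.

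The main obstacle is establishing the arithmetic identity $\operatorname{gcd}(N_1,\ldots,N_n) = 1$. I would verify this prime by prime using $p$-adic valuations $v_p$, exploiting the hypothesis $\operatorname{gcd}(a_i,b_i) = 1$ which forces $\min(v_p(a_i),v_p(b_i)) = 0$ for every $i$ and every prime $p$. If $M := \max_j v_p(b_j)$ is positive, fix $i^*$ with $v_p(b_{i^*}) = M$; then $v_p(a_{i^*}) = 0$, and $v_p(g) = 0$ as well (since $g \mid a_{i^*}$), while $v_p(L) = M$, giving $v_p(N_{i^*}) = M + 0 - 0 - M = 0$. If instead $M = 0$, then $v_p(b_i) = v_p(L) = 0$ for all $i$, so $v_p(N_i) = v_p(a_i) - v_p(g)$; choosing any index $i$ achieving the minimum $v_p(a_i) = v_p(g)$ again produces $v_p(N_i) = 0$. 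Since for every prime $p$ some $N_i$ has $v_p(N_i) = 0$, one concludes $\operatorname{gcd}(N_1,\ldots,N_n) = 1$, which completes the argument.
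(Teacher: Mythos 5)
Your proof is correct and follows the same basic strategy as the paper: recognize that the period is the least positive $T$ with $Tv \in \ZZ^n$, then compute that least element. The difference is one of rigor rather than approach. The paper reduces to $Ta_i/b_i \in \ZZ$ for all $i$, reparametrizes $T$ in a somewhat opaque way, and then simply asserts that $|\operatorname{lcm}(b_i)|/|\gcd(a_i)|$ is the minimum; your argument actually establishes this. You observe that $G = \{T : Tv \in \ZZ^n\}$ is a nontrivial discrete subgroup of $\RR$, verify $T_0 = L/g \in G$ with $T_0 v = (N_1,\ldots,N_n)$, prove $\gcd(N_1,\ldots,N_n) = 1$ prime by prime using $p$-adic valuations and the coprimality $\min(v_p(a_i), v_p(b_i)) = 0$, and then use Bezout to show that $sT_0 \in G$ forces $s \in \ZZ$. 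This cleanly closes the gap in the paper's minimality claim. (One small bookkeeping point you implicitly handle: when $a_i = 0$ the hypothesis forces $b_i = \pm 1$, so $N_i = 0$; these indices contribute nothing to the $\gcd$, and your case analysis always produces an index $i$ with $a_i \neq 0$ and $v_p(N_i) = 0$, so the argument is unaffected.)
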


\begin{proof}
Write $\gamma(t)=x_1(t)e_1+\ldots +x_n(t)e_n$, where $x_i(0)=0$ and $x_i'(t)=a_i/b_i$ for
all $1\leq i\leq n$, and observe that $x_i(t)=ta_i/b_i$ for all
$1\leq i\leq n$. If $T\in (0,\infty)$ is such that $\gamma(t+T)=\gamma(t)$ in
$(\RR/\ZZ)^n$ for all $t\in\RR$, then $x_i(t+T)=x_i(t)+c_i$ for some constants $c_i\in\ZZ$
and thus $Ta_i/b_i=c_i$ for all $1\leq i\leq n$. Since $\operatorname{gcd}(a_i,b_i)=1$
by assumption, $T=t_ib_i/d_i$ for some $d_i,t_i\in\ZZ$ with $\operatorname{gcd}(t_i,d_i)=1$
and $d_i|a_i$ for all $1\leq i\leq n$.
The number $T=|\operatorname{lcm}(b_1,\ldots ,b_n)|/|\operatorname{gcd}(a_1,\ldots ,a_n)|$
is the minimum among all $T\in (0,\infty)$ satisfying these properties, and therefore it
is the period of $\gamma$.
\end{proof}

\begin{remark}\label{RemarkWhyLatticePoints}
It follows from the proof of Lemma \ref{LemmaLinearFlowTorus} that, although periodic
orbits with irrational $v\in\RR^n$ exist, they must have irrational period.
Moreover if an orbit has period one then $v\in\ZZ^n$, because $b_i | \operatorname{lcm}
(b_1,\ldots ,b_n)=\pm\operatorname{gcd}(a_1,\ldots ,a_n)|a_i$ and $\operatorname{gcd}(a_i,b_i)=1$
for all $1\leq i\leq n$.
\end{remark}

\begin{proposition}\label{PropPeriods}
If $\gamma$ is a periodic orbit of $X_{H_{\epsilon,r\phi}}$ in the family $B^\epsilon_\sigma(d)$,
then its period is
$$T(\gamma)=\frac{1}{|\operatorname{gcd}(\langle c_\sigma(d),e_k\rangle : k=1,\ldots ,n)|} \quad .$$
In particular, $\gamma$ has period one if and only if $c_\sigma(d)$ is primitive.
\end{proposition}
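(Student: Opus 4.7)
The plan is to reduce the statement to an application of Lemma \ref{LemmaLinearFlowTorus} by first showing that, along the family $B^\epsilon_\sigma(d)$, the Hamiltonian vector field $X_{H_{r\phi,\epsilon}}$ has \emph{constant} coefficients when expressed in the basis $X_{e_1},\ldots ,X_{e_n}$ of infinitesimal generators of the standard torus action, and these coefficients are precisely the components of $c_\sigma(d)$.

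First I would pick a point $z\in B^\epsilon_\sigma(d)\subset S^\epsilon_\sigma$ and apply Proposition \ref{PropSmoothingVectorField} to write
$$X_{H_{r\phi,\epsilon}}(z) = \sum_{\rho\in\sigma(1)}q'_{\epsilon_\rho}(r_\rho(\mu_{r\phi}(z)))X_{u_\rho}(z) \quad .$$
By definition of $B^\epsilon_\sigma(d)$, every factor $q'_{\epsilon_\rho}(r_\rho(\mu_{r\phi}(z)))$ equals $-d_\rho$, so the right-hand side simplifies to $-\sum_{\rho\in\sigma(1)}d_\rho X_{u_\rho}(z)$. Then applying Lemma \ref{LemmaInfinitesimalAction} to each $X_{u_\rho}=\sum_{l=1}^n u_\rho^{(l)}X_{e_l}$ and swapping the order of summation yields
$$X_{H_{r\phi,\epsilon}}(z) = -\sum_{l=1}^{n}\Bigl(\sum_{\rho\in\sigma(1)}d_\rho u_\rho^{(l)}\Bigr)X_{e_l}(z) = -\sum_{l=1}^n \langle c_\sigma(d),e_l\rangle X_{e_l}(z) \quad ,$$
since by definition $c_\sigma(d)=\sum_{\rho\in\sigma(1)}d_\rho u_\rho$. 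The crucial observation is that the coefficients $-\langle c_\sigma(d),e_l\rangle$ do not depend on $z$ along the orbit: the orbit $\gamma$ stays within a single moment fiber (as $X_{H_{r\phi,\epsilon}}$ is tangent to fibers by the remark after Proposition \ref{PropSmoothingVectorField}), and the family $B^\epsilon_\sigma(d)$ is defined exactly by fixing the values of the $q'_{\epsilon_\rho}\circ r_\rho\circ\mu_{r\phi}$.

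Next I would identify the moment fiber through $\gamma$ with the standard torus $(\RR/\ZZ)^n$ using the angular coordinates that integrate the vector fields $X_{e_l}$, so that the flow of $-\langle c_\sigma(d),e_l\rangle X_{e_l}$ becomes translation by $-\langle c_\sigma(d),e_l\rangle$ per unit time in the $l$-th circle factor. Since these flows commute, the total flow is the linear flow on $(\RR/\ZZ)^n$ with velocity vector $v=-c_\sigma(d)\in\ZZ^n$. Applying Lemma \ref{LemmaLinearFlowTorus} with $a_l=-\langle c_\sigma(d),e_l\rangle$ and $b_l=1$ gives
$$T(\gamma)=\frac{|\operatorname{lcm}(1,\ldots ,1)|}{|\operatorname{gcd}(\langle c_\sigma(d),e_1\rangle,\ldots ,\langle c_\sigma(d),e_n\rangle)|}=\frac{1}{|\operatorname{gcd}(\langle c_\sigma(d),e_k\rangle:k=1,\ldots ,n)|} \quad .$$
The ``in particular'' clause is then immediate: since $c_\sigma(d)\in\ZZ^n$, primitivity is equivalent to the gcd of its coordinates being $1$, which is the condition $T(\gamma)=1$.

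The only nontrivial point to double-check is the identification of the moment fiber with $(\RR/\ZZ)^n$ in a way compatible with the normalization of the vector fields $X_{e_l}$, so that the flow velocity really is the integer vector $c_\sigma(d)$ and Lemma \ref{LemmaLinearFlowTorus} applies directly; the remainder is formal substitution and bookkeeping.
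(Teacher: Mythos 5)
Your proof is correct and follows essentially the same path as the paper's: express $X_{H_{r\phi,\epsilon}}$ on $B^\epsilon_\sigma(d)$ via Proposition \ref{PropSmoothingVectorField}, substitute $q'_{\epsilon_\rho}(r_\rho(\mu_{r\phi}(z)))=-d_\rho$, expand in the basis $X_{e_l}$ using Lemma \ref{LemmaInfinitesimalAction} to get constant velocity $-c_\sigma(d)$, and apply Lemma \ref{LemmaLinearFlowTorus} with $c_\sigma(d)\in\ZZ^n$. The normalization issue you flag at the end (identifying the moment fiber with $(\RR/\ZZ)^n$ compatibly with the $2\pi$-periodic flows of the $X_{e_l}$) is indeed implicitly assumed in the paper's proof as well; your version is slightly more explicit about the intermediate steps but is the same argument.
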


\begin{proof}
Since $\gamma$ is in $B^\epsilon_\sigma(d)$, one has
$$X_{H_{\epsilon,r\phi}}(\gamma(t))=-\sum_{\rho\in\sigma(1)}d_\rho X_{u_\rho}(\gamma(t)) \quad .$$
By Lemma \ref{LemmaInfinitesimalAction} $X_{u_\rho}=\sum_{k=1}^nu_\rho^{(k)}X_{e_k}$, so that
$$X_{H_{\epsilon,r\phi}}(\gamma(t))=\sum_{k=1}^n\left( -\sum_{\rho\in\sigma(1)}d_\rho u_\rho^{(k)}
\right)X_{e_k}(\gamma(t))=\sum_{k=1}^n(-\langle c_\sigma(d),e_k\rangle )X_{e_k}(\gamma(t)) \quad .$$
The conclusion follows from Lemma \ref{LemmaLinearFlowTorus} and the fact that $c_\sigma(d)\in\ZZ^n$
since $d\in L_\sigma$.
\end{proof}

\section{Level sets are of contact type}\label{SecContactType}

\subsection{The candidate contact form}

For any $\epsilon\in\RR_{>0}^{\Sigma(1)}$ and $\delta\in (0,\infty)$ set
$$W_{\epsilon,\delta}(r\phi) = \{ \; z\in (\CC^\times)^n \; : \;
H_{\epsilon, r\phi}(z)\leq \delta \; \} \quad ;$$
this is a submanifold with boundary of $(\CC^\times)^n$ with a Lagrangian torus fibration
given by the moment map $\mu_{r\phi}$, and $\partial W_{\epsilon,\delta}(r\phi)=
H_{\epsilon, r\phi}^{-1}(\delta)$ is homeomorphic to $S^{n-1}\times(S^1)^n$ for $\delta\in (0,1)$
thanks to Proposition \ref{PropLevelSpheres} (however, it can be non-compact for $\delta\geq 1$).
Recall from Definition \ref{DefInducedPotential} that $(\CC^\times)^n$ has a K\"{a}hler
potential $F_{r\phi}$ induced by the divisor $rD_\phi$. The one-form $\theta_{r\phi}
=\frac{1}{2}d^cF_{r\phi}$ may or may not restrict to a contact form on $\partial W_{\epsilon,\delta}(r\phi)$,
depending on whether the dual vector field $X_{\theta_{r\phi}}$ points out along the
boundary $\partial W_{\epsilon,\delta}(r\phi)$ or not. This is equivalent to asking
whether $dH_{\epsilon ,r\phi}(X_{\theta_{r\phi}})>0$ along $H^{-1}_{\epsilon,r\phi}(\delta)$
or not. Below is a sufficient criterion for this to hold.

\begin{lemma}\label{LemmaCriterionForTransversality}
If $\theta_{r\phi}(X_{u_\rho})_{|U^\epsilon_\rho}<0$ for all $\rho\in\Sigma(1)$ then
$dH_{\epsilon ,r\phi}(X_{\theta_{r\phi}})>0$ along $H^{-1}_{\epsilon,r\phi}(\delta)$.
\end{lemma}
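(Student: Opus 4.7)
The plan is to reduce the transversality condition $dH_{\epsilon,r\phi}(X_{\theta_{r\phi}}) > 0$ to a sign check on each stratum of the collar at infinity, exploiting the stratum-wise formula for the Hamiltonian vector field from Proposition \ref{PropSmoothingVectorField}. The key preliminary identity is
$$dH_{\epsilon,r\phi}(X_{\theta_{r\phi}}) = \theta_{r\phi}(X_{H_{\epsilon,r\phi}}),$$
which follows directly from the defining equations $\iota_{X_{\theta_{r\phi}}}\omega_{r\phi}=\theta_{r\phi}$ and $\iota_{X_{H_{\epsilon,r\phi}}}\omega_{r\phi}=-dH_{\epsilon,r\phi}$ together with the antisymmetry of $\omega_{r\phi}$. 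After this identity, proving the lemma reduces to showing $\theta_{r\phi}(X_{H_{\epsilon,r\phi}})(z) > 0$ at every $z \in H^{-1}_{\epsilon,r\phi}(\delta)$.

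Next I would locate each such $z$ within the collar decomposition. Since $H_{\epsilon,r\phi}$ vanishes identically on $S^\epsilon_{\{0\}}$ and $\delta>0$, any $z\in H^{-1}_{\epsilon,r\phi}(\delta)$ must lie in some stratum $S^\epsilon_\sigma$ with $\sigma(1)\neq\emptyset$. Proposition \ref{PropSmoothingVectorField} then gives
$$\theta_{r\phi}(X_{H_{\epsilon,r\phi}})(z) = \sum_{\rho\in\sigma(1)} q'_{\epsilon_\rho}(r_\rho(\mu_{r\phi}(z)))\,\theta_{r\phi}(X_{u_\rho})(z),$$
so the whole problem is to show that this finite sum is positive.

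The final step is a sign comparison governed by the stratum's defining conditions. For $z\in S^\epsilon_\sigma$ and $\rho\in\sigma(1)$, we have $r_\rho(\mu_{r\phi}(z))\in(0,\epsilon_\rho)$, so Lemma \ref{LemmaBumpProperties}(2) yields $q'_{\epsilon_\rho}(r_\rho(\mu_{r\phi}(z)))<0$ strictly. The same condition places $z$ in $U^\epsilon_\rho$, and the hypothesis of the lemma provides $\theta_{r\phi}(X_{u_\rho})(z)<0$. Each summand is then a product of two strictly negative numbers, hence strictly positive, and positivity of the sum follows. The only step that requires genuine care is the bookkeeping of sign conventions for the Liouville dual of $\theta_{r\phi}$ and for the Hamiltonian vector field; once these are pinned down, the argument is essentially routine on each stratum, and the nontrivial analytic input has already been packaged into Lemma \ref{LemmaBumpProperties}(2) and Proposition \ref{PropSmoothingVectorField}.
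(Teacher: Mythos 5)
Your proposal is correct and follows essentially the same route as the paper: the identity $dH_{\epsilon,r\phi}(X_{\theta_{r\phi}})=\theta_{r\phi}(X_{H_{\epsilon,r\phi}})$, then Proposition \ref{PropSmoothingVectorField} to expand $\theta_{r\phi}(X_{H_{\epsilon,r\phi}})$ as a sum, then a sign argument using Lemma \ref{LemmaBumpProperties}(2) and the hypothesis. The only cosmetic difference is that you restrict the sum to $\sigma(1)$ for the stratum containing $z$ and observe every term is strictly positive, whereas the paper keeps the sum over all of $\Sigma(1)$ and notes that all terms are nonnegative with at least one strictly positive (since $\delta>0$ forces $z$ into some $U^\epsilon_\rho$); the two bookkeepings are interchangeable.
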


\begin{proof}
Observe that
$$dH_{\epsilon ,r\phi}(X_{\theta_{r\phi}})= -
\omega_{r\phi}(X_{H_{\epsilon,r\phi}},X_{\theta_{r\phi}})=
\theta_{r\phi}(X_{H_{\epsilon,r\phi}}) \quad .$$
Using the formula for the vector field associated to the smoothing Hamiltonian found in
Proposition \ref{PropSmoothingVectorField}, one gets
$$\theta_{r\phi}(X_{H_{\epsilon,r\phi}})=
\sum_{\rho\in\Sigma(1)}(q'_{\epsilon_\rho}(r_\rho\circ\mu_{r\phi}))\theta_{r\phi}(X_{u_\rho}) \quad .$$
Now $q'_{\epsilon_\rho}\leq 0$ by construction of the bump functions, and if $z\in(\CC^\times)^n$
has $q'_{\epsilon_\rho}(r_\rho(\mu_{r\phi}(z)))\neq 0$ then $z\in U^\epsilon_\rho$
and so $\theta_{r\phi}(X_{u_\rho})(z)<0$ by assumption; this implies that
$dH_{\epsilon ,r\phi}(X_{\theta_{r\phi}})> 0$ along $H^{-1}_{\epsilon,r\phi}(\delta)$,
because for $\delta \in (0,\infty)$ the level set $H_{\epsilon,r\phi}^{-1}(\delta)$ is contained
in the union of all $U^\epsilon_\rho$ with $\rho\in\Sigma(1)$, so $H_{\epsilon,r\phi}(z)=\delta$
implies $q'_{\epsilon_\rho}(r_\rho(\mu_{r\phi}(z)))\neq 0$ for some $\rho\in\Sigma(1)$.
\end{proof}

The quantity $\theta_{r\phi}(X_{u_\rho})$ in the criterion above is
controlled by the following distortion formula.

\begin{lemma}\label{LemmaDistortionFormula} (Distortion formula)
There exists $C_\rho\in\RR$ such that $\theta_{r\phi}(X_{u_\rho})=\langle \mu_{r\phi},u_\rho\rangle + C_\rho$
holds on $(\CC^\times)^n$.
\end{lemma}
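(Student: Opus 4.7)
The plan is to use the $(S^1)^n$-invariance of the primitive $\theta_{r\phi}$, combined with the Hamiltonian nature of the action, via Cartan's magic formula. The key observation is that these two facts together force the function $\theta_{r\phi}(X_{u_\rho}) - \langle \mu_{r\phi}, u_\rho\rangle$ to be locally constant on $(\CC^\times)^n$, and then connectedness finishes the argument.

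More concretely, I would first invoke part (2) of Proposition \ref{PropositionHamiltonianAction}: since $\theta_{r\phi}$ is $(S^1)^n$-invariant and the infinitesimal generator of the subgroup $S_{u_\rho}\subset (S^1)^n$ is $X_{u_\rho}$ (Definition \ref{DefInfinitesimalAction}), one has $\mathcal{L}_{X_{u_\rho}}\theta_{r\phi}=0$. Applying Cartan's formula $\mathcal{L}_{X_{u_\rho}}\theta_{r\phi} = d(\iota_{X_{u_\rho}}\theta_{r\phi}) + \iota_{X_{u_\rho}}d\theta_{r\phi}$ and using $d\theta_{r\phi}=\omega_{r\phi}$, this yields
\[
d\bigl(\theta_{r\phi}(X_{u_\rho})\bigr) = -\iota_{X_{u_\rho}}\omega_{r\phi} = -\omega_{r\phi}(X_{u_\rho},-).
\]

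Next I would apply part (1) of Proposition \ref{PropositionHamiltonianAction}, which says precisely that $-\omega_{r\phi}(X_{u_\rho},-) = d\langle\mu_{r\phi},u_\rho\rangle$. Combining the two identities gives
\[
d\bigl(\theta_{r\phi}(X_{u_\rho}) - \langle\mu_{r\phi},u_\rho\rangle\bigr) = 0.
\]
Since $(\CC^\times)^n$ is connected, the function $\theta_{r\phi}(X_{u_\rho}) - \langle\mu_{r\phi},u_\rho\rangle$ must be a real constant $C_\rho$, which is the claim.

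There is no real obstacle here once one recognizes that strict exactness of the torus action (the fact that the primitive $\theta_{r\phi}$ itself, not just $\omega_{r\phi}$, is invariant) is encoded in Proposition \ref{PropositionHamiltonianAction}(2). The only mild care is that Cartan's formula gives the moment-map equation up to an additive constant on each connected component; since we are working on the maximal torus orbit $(\CC^\times)^n$, which is connected, this constant is a single real number $C_\rho$ depending only on $\rho$. The actual determination of the value $C_\rho=0$ is deferred to Lemma \ref{LemmaDistortionConstant} via the wrapping-averaging formula, and is not needed for the present statement.
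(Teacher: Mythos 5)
Your proof is correct and follows the same route as the paper's: invoke Proposition \ref{PropositionHamiltonianAction}(2) to get $\mathcal{L}_{X_{u_\rho}}\theta_{r\phi}=0$, apply Cartan's formula together with part (1) to conclude that $\theta_{r\phi}(X_{u_\rho})-\langle\mu_{r\phi},u_\rho\rangle$ is closed, and use connectedness of $(\CC^\times)^n$. You have merely spelled out the intermediate steps a bit more explicitly than the paper does.
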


\begin{proof}
By Proposition \ref{PropositionHamiltonianAction} the moment map $\mu_{r\phi}:(\CC^\times)^n\to\RR^n$
satisfies $\iota_{X_{u_\rho}}\omega_{r\phi}=-d\langle \mu_{r\phi},u_\rho\rangle$
for all $\rho\in\Sigma(1)$. Using $\omega_{r\phi}=d\theta_{r\phi}$ and Cartan's formula
$$\mathcal{L}_{X_{u_\rho}}\theta_{r\phi}=d(\iota_{X_{u_\rho}}\theta_{r\phi}-\langle \mu_{r\phi},u_\rho\rangle ) \quad ,$$
so the existence of $C_\rho$ follows from the fact that $(\CC^\times)^n$ is connected and
$\mathcal{L}_{X_{u_\rho}}\theta_{r\phi}=0$ because the action of the real torus $(S^1)^n\subset (\CC^\times)^n$
preserves $\theta_{r\phi}$, as explained in Proposition \ref{PropositionHamiltonianAction}.
\end{proof}

\subsection{Wrapping and averaging}

\begin{definition}\label{DefWrappingFunction}
For any $\rho\in\Sigma(1)$, call wrapping function of the one-parameter subgroup
$\lambda_{u_\rho}:\CC^\times\to (\CC^\times)^n$ the function
$$w_{\rho,r\phi}: (0,\infty)\to\RR \quad , \quad w_{\rho,r\phi}(a)= \int_{|t|=a}\lambda_{u_\rho}^*\theta_{r\phi} \quad .$$
\end{definition}

Thinking of $\lambda_{u_\rho}(\CC^\times)\subset (\CC^\times)^n$ as a holomorphic cylinder,
the wrapping function $w_\rho(a)$ encodes the periods of $\theta_{r\phi}$ along a family
of circles that cover it, one for each radius $a\in (0,\infty)$. These periods can also
be computed from the combinatorial data of the section polytope $P_{r\phi}$.

\begin{definition}\label{DefLatticeAverageFunction}
For any $\rho\in\Sigma(1)$, call lattice average function of $\rho$ the function
$$\overline{m}_{\rho,r\phi}:(0,\infty)\to\RR^n \quad , \quad \overline{m}_{\rho,r\phi}(a)=
\frac{\sum_{m\in P_{r\phi}\cap\ZZ^n}a^{2\langle m,u_\rho\rangle}m}{\sum_{m\in P_{r\phi}\cap\ZZ^n}a^{2\langle m,u_\rho\rangle}} \quad .$$
\end{definition}

At $a=1$ the average lattice functions of all rays $\rho\in\Sigma(1)$ agree and give:
$$\overline{m}_{\rho,r\phi}(1)=\frac{\sum_{m\in P_{r\phi}\cap\ZZ^n}m}{|P_{r\phi}\cap\ZZ^n|} \quad .$$

\begin{definition}\label{DefAverageLatticePoint}
Call $\overline{m}_{r\phi}=\overline{m}_{\rho,r\phi}(1)$ for any $\rho\in\Sigma(1)$
the average lattice point of the section polytope $P_{r\phi}$.
\end{definition}

The wrapping functions are related to the lattice average functions by the following.

\begin{proposition}\label{PropWrappingAverageFormula} (Wrapping-averaging formula)
For any $\rho\in\Sigma(1)$ one has $w_{\rho,r\phi}(a)=2\pi\langle \overline{m}_{\rho,r\phi}(a),u_\rho\rangle$ .
\end{proposition}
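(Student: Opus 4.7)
The plan is to substitute the explicit formula for $\theta_{r\phi}$ from Section \ref{SecPotential} into the pullback $\lambda_{u_\rho}^*\theta_{r\phi}$ and then parametrize the circle $|t|=a$ to integrate directly. No further conceptual input beyond the previously derived expressions is required.

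First I would compute how the relevant one-forms and characters behave under the one-parameter subgroup $\lambda_{u_\rho}(t)=(t^{u_\rho^{(1)}},\ldots ,t^{u_\rho^{(n)}})$. Since the $k$-th coordinate pulls back to $z_k = t^{u_\rho^{(k)}}$, one has $\lambda_{u_\rho}^*(dz_k/z_k)=u_\rho^{(k)}\, dt/t$ and analogously $\lambda_{u_\rho}^*(d\overline{z}_k/\overline{z}_k)=u_\rho^{(k)}\, d\overline{t}/\overline{t}$. For the characters, $\lambda_{u_\rho}^*\chi^m = t^{\langle m,u_\rho\rangle}$, hence
$$|\chi^m|^2 \circ \lambda_{u_\rho} = |t|^{2\langle m,u_\rho\rangle}.$$
In particular, all coefficient ratios appearing in $\theta_{r\phi}$ become functions of $|t|$ only.

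Second, inserting these substitutions into the complex-coordinate expression for $\theta_{r\phi}$ recalled in Section \ref{SecPotential}, the coefficient of the $k$-th summand becomes exactly the $k$-th component $\overline{m}_{\rho,r\phi}^{(k)}(|t|)$ of the lattice average function from Definition \ref{DefLatticeAverageFunction}. Collecting the sum over $k$ factors yields
$$\lambda_{u_\rho}^*\theta_{r\phi} = -\frac{i}{2}\bigl\langle \overline{m}_{\rho,r\phi}(|t|),\, u_\rho\bigr\rangle \left(\frac{dt}{t}-\frac{d\overline{t}}{\overline{t}}\right).$$

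Third, I would parametrize the circle $|t|=a$ by $t=ae^{i\alpha}$ with $\alpha\in [0,2\pi]$, so that $dt/t = i\, d\alpha$ and $d\overline{t}/\overline{t} = -i\, d\alpha$, giving $dt/t - d\overline{t}/\overline{t} = 2i\, d\alpha$. The scalar $\langle \overline{m}_{\rho,r\phi}(a),u_\rho\rangle$ is constant along this circle, so the integral reduces to
$$\int_{|t|=a}\lambda_{u_\rho}^*\theta_{r\phi} = -\frac{i}{2}\bigl\langle \overline{m}_{\rho,r\phi}(a),u_\rho\bigr\rangle \int_0^{2\pi}2i\, d\alpha = 2\pi\bigl\langle \overline{m}_{\rho,r\phi}(a),u_\rho\bigr\rangle.$$

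There is no real obstacle here: the statement is essentially a bookkeeping identity between two encodings of the same data on the holomorphic cylinder $\lambda_{u_\rho}(\CC^\times)\subset (\CC^\times)^n$. The only thing to watch carefully is the parametrization of the circle and the resulting signs and factors of $i$.
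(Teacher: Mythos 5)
Your proof is correct and follows essentially the same route as the paper's: pull back the explicit complex-coordinate formula for $\theta_{r\phi}$ along $\lambda_{u_\rho}$, note that $|\chi^m|^2$ restricts to $|t|^{2\langle m,u_\rho\rangle}$ so the coefficients become components of $\overline{m}_{\rho,r\phi}(|t|)$, and then parametrize $t=ae^{i\alpha}$ to evaluate the integral. The signs and factors of $i$ all check out.
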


\begin{proof}
Writing $z=(z_1,\ldots ,z_n)\in (\CC^\times)^n$ and
$m=(m^{(1)},\ldots ,m^{(n)})\in P_{r\phi}\cap\ZZ^n$ one has
$$|\chi^m(z_1,\ldots ,z_n)|^2=|z_1|^{2m^{(1)}}\cdots |z_n|^{2m^{(n)}} \quad .$$
It follows that for any $\rho\in\Sigma(1)$ and $t\in\CC^\times$ one has
$|\chi^m(\lambda_{u_\rho}(t))|^2=|t|^{2\langle m,u_\rho\rangle}$. Using the explicit
formula for $\theta_{r\phi}$ in complex coordinates found in Section \ref{SecPotential},
the substitution $z_k=t^{u_\rho^{(k)}}$ for $1\leq k\leq n$ gives
$$(\lambda_{u_\rho}^*\theta_{r\phi})(t)=-\frac{i}{2}\sum_{k=1}^n
\frac{\sum_{m\in P_{r\phi}\cap\ZZ^n} m^{(k)}|t|^{2\langle m,u_\rho\rangle}}{\sum_{m\in P_{r\phi}\cap\ZZ^n} |t|^{2\langle m,u_\rho\rangle}}
u_\rho^{(k)}\left(\frac{dt}{t}-\frac{d\overline{t}}{\overline{t}} \right) \quad .$$
The claim follows from writing $t=ae^{i\alpha}$ for $a\in (0,\infty)$ and $\alpha\in [0,2\pi)$
and using Definition \ref{DefWrappingFunction} of wrapping function $w_\rho(a)$.
\end{proof}

There is a direct relation between the infinitesimal wrapping as $a\to 0$ and the
coefficients of the divisor $rD_\phi$ as combination of the prime torus invariant
divisors $D_\rho$ associated to rays $\rho\in\Sigma(1)$.

\begin{corollary}\label{CorInfinitesimalWrapping}
For any $\rho\in\Sigma$ one has $\lim_{a\to 0} w_{\rho, r\phi}(a)=2\pi\phi_{r\phi}(u_\rho)$.
\end{corollary}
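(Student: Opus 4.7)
The plan is to combine the wrapping-averaging formula (Proposition \ref{PropWrappingAverageFormula}) with a dominant-term analysis of the lattice sum defining $\overline{m}_{\rho,r\phi}(a)$ as $a\to 0$. By Proposition \ref{PropWrappingAverageFormula}, it suffices to show that
$$\lim_{a\to 0}\langle \overline{m}_{\rho,r\phi}(a), u_\rho\rangle = r\phi(u_\rho),$$
which expands to
$$\lim_{a\to 0}\frac{\sum_{m\in P_{r\phi}\cap\ZZ^n}a^{2\langle m,u_\rho\rangle}\langle m,u_\rho\rangle}{\sum_{m\in P_{r\phi}\cap\ZZ^n}a^{2\langle m,u_\rho\rangle}} = r\phi(u_\rho).$$

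First I would identify the dominant exponent. By Definition \ref{DefSectionPolytope} of the section polytope, every $m\in P_{r\phi}$ satisfies $\langle m,u_\rho\rangle\geq r\phi(u_\rho)$, with equality exactly on the facet $F_\rho\subset\partial P_{r\phi}$ corresponding to $\rho$. Hence the minimum $N_\rho=\min\{\langle m,u_\rho\rangle : m\in P_{r\phi}\cap\ZZ^n\}$ equals $r\phi(u_\rho)$ and is attained precisely on $F_\rho\cap\ZZ^n$. Because $r$ was chosen so that the vertices of $P_{r\phi}$ are integral, each vertex of $F_\rho$ is a lattice point, so $F_\rho\cap\ZZ^n\neq\emptyset$; this guarantees that the dominant term does not vanish.

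Next I would factor $a^{2N_\rho}=a^{2r\phi(u_\rho)}$ out of both numerator and denominator, rewriting the ratio as
$$\frac{\sum_{m\in P_{r\phi}\cap\ZZ^n}a^{2(\langle m,u_\rho\rangle-r\phi(u_\rho))}\langle m,u_\rho\rangle}{\sum_{m\in P_{r\phi}\cap\ZZ^n}a^{2(\langle m,u_\rho\rangle-r\phi(u_\rho))}}.$$
All exponents are now non-negative integers, equal to zero exactly on $F_\rho\cap\ZZ^n$ and strictly positive on $(P_{r\phi}\setminus F_\rho)\cap\ZZ^n$. Taking $a\to 0^+$, each term with $m\notin F_\rho$ contributes $0$, while each term with $m\in F_\rho\cap\ZZ^n$ contributes $1$ to the denominator and $\langle m,u_\rho\rangle=r\phi(u_\rho)$ to the numerator. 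The limit is therefore $\lvert F_\rho\cap\ZZ^n\rvert\cdot r\phi(u_\rho)/\lvert F_\rho\cap\ZZ^n\rvert = r\phi(u_\rho)$, giving $\lim_{a\to 0}w_{\rho,r\phi}(a)=2\pi\, r\phi(u_\rho)$ as claimed.

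The only genuine subtlety is the non-vanishing of the denominator in the limit, which requires $F_\rho\cap\ZZ^n\neq\emptyset$; this is automatic from the integrality of the vertices of $P_{r\phi}$ established in Section \ref{SecFans}. All remaining steps are a routine dominated-convergence argument on a finite sum, so no technical obstacle arises beyond careful bookkeeping of the facet lattice points.
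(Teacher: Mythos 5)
Your proof is correct and follows essentially the same route as the paper: reduce via the wrapping-averaging formula (Proposition \ref{PropWrappingAverageFormula}) to computing $\lim_{a\to 0}\langle \overline{m}_{\rho,r\phi}(a),u_\rho\rangle$, recognize this as the minimum of $\langle m,u_\rho\rangle$ over $P_{r\phi}\cap\ZZ^n$ by a dominant-exponent argument, and identify that minimum as $r\phi(u_\rho)$ via the facet $F_\rho$. You take a slightly more direct path: you read the value of the minimum straight from the defining inequalities of the section polytope (Definition \ref{DefSectionPolytope}) and cite integrality of the vertices of $P_{r\phi}$ to get $F_\rho\cap\ZZ^n\neq\emptyset$, whereas the paper produces an explicit witness $m_\sigma$ (the vertex dual to a top-dimensional cone $\sigma$ with $\rho\in\sigma(1)$) and verifies both that $\phi_{r\phi}(u_\rho)=\langle m_\sigma,u_\rho\rangle$ and that $m_\sigma\in F_\rho$; the two arguments are logically interchangeable.

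One small bookkeeping correction: the integrality of the vertices of $P_{r\phi}$ is set up in the Introduction and in Section \ref{SecPotential} (via the choice of $r$ making $D_{r\phi}$ Cartier and very ample), not in Section \ref{SecFans}, so the citation in your last paragraph should point there instead.
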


\begin{proof}
Fix some ray $\rho\in\Sigma(1)$. Since the fan $\Sigma$ is complete, there exists a
top-dimensional cone $\sigma\in\Sigma(n)$ such that $\rho\in\sigma(1)$, and on $\sigma$
one has $(\phi_{r\phi})_{|\sigma}=\langle m_\sigma , \cdot \rangle$ for some lattice
point $m_\sigma\in\ZZ^n$. Since $u_\rho\in\rho\subseteq\sigma$, in particular
$\phi_{r\phi}(u_\rho)=\langle m_\sigma, u_\rho\rangle$ . From Proposition \ref{PropWrappingAverageFormula}
$w_\rho(a)=2\pi\langle \overline{m}_{\rho,r\phi}(a),u_\rho\rangle$, so the claim is
equivalent to proving that $\langle \overline{m}_{\rho,r\phi}(a),u_\rho\rangle\to \langle m_\sigma, u_\rho\rangle$
as $a\to 0$. By Definition \ref{DefLatticeAverageFunction} of lattice average function of $\rho$,
the quantity $\langle \overline{m}_{\rho,r\phi}(a),u_\rho\rangle$ is a rational function
of $a$, so the limit as $a\to 0$ is $\min_{m\in P_{r\phi}\cap\ZZ^n}\langle m,u_\rho\rangle$.
Since $u_\rho$ is normal to the facet $F_\rho\subset\partial P_{r\phi}$ and points inside
the polytope, this minimum is achieved precisely by the lattice points $m\in F_\rho\cap\ZZ^n$
of this facet. The claim follows from the observation that $m_\sigma\in F_\rho$, because
$\Sigma$ is the normal fan of $P_{r\phi}$ and $m_\sigma$ is the vertex dual to
the top-dimensional cone $\sigma$, while $\rho\subseteq \sigma$ is the ray dual
to the facet $F_\rho$.
\end{proof}

One can give a more explicit description of the constant $C_\rho\in\RR$ appearing
in the distortion formula of Lemma \ref{LemmaDistortionFormula}.

\begin{lemma}\label{LemmaDistortionConstant}
One has $C_\rho = 0$.
\end{lemma}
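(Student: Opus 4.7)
The plan is to pull back the distortion formula along the one-parameter subgroup $\lambda_{u_\rho}:\CC^\times\to (\CC^\times)^n$, integrate around a circle $|t|=a$, and compare with the wrapping-averaging formula of Proposition \ref{PropWrappingAverageFormula}.

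The key geometric observation is that if we parametrize the circle $|t|=a$ by $t=ae^{i\alpha}$ with $\alpha\in [0,2\pi)$, then since $\lambda_{u_\rho}$ is a group homomorphism we have $\lambda_{u_\rho}(ae^{i\alpha})=\lambda_{u_\rho}(e^{i\alpha})\cdot \lambda_{u_\rho}(a)$, so by Definition \ref{DefInfinitesimalAction} the tangent vector of this curve at $\alpha$ equals $X_{u_\rho}(\lambda_{u_\rho}(ae^{i\alpha}))$. Consequently
$$w_{\rho,r\phi}(a)=\int_{|t|=a}\lambda_{u_\rho}^*\theta_{r\phi}=\int_0^{2\pi}\theta_{r\phi}(X_{u_\rho})(\lambda_{u_\rho}(ae^{i\alpha}))\,d\alpha.$$
Applying the distortion formula of Lemma \ref{LemmaDistortionFormula} to the integrand gives
$$w_{\rho,r\phi}(a)=\int_0^{2\pi}\langle \mu_{r\phi}(\lambda_{u_\rho}(ae^{i\alpha})),u_\rho\rangle\,d\alpha+2\pi C_\rho.$$

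Next I would use $(S^1)^n$-invariance of $\mu_{r\phi}$ (which follows since $|\chi^m|$ is invariant) to replace $\mu_{r\phi}(\lambda_{u_\rho}(ae^{i\alpha}))$ by $\mu_{r\phi}(\lambda_{u_\rho}(a))$. Computing this explicitly with $\lambda_{u_\rho}(a)=(a^{u_\rho^{(1)}},\ldots,a^{u_\rho^{(n)}})$ yields $|\chi^m(\lambda_{u_\rho}(a))|^2=a^{2\langle m,u_\rho\rangle}$, so that
$$\mu_{r\phi}(\lambda_{u_\rho}(a))=\frac{\sum_{m\in P_{r\phi}\cap\ZZ^n}a^{2\langle m,u_\rho\rangle}m}{\sum_{m\in P_{r\phi}\cap\ZZ^n}a^{2\langle m,u_\rho\rangle}}=\overline{m}_{\rho,r\phi}(a).$$
The integrand is therefore constant in $\alpha$, and one obtains $w_{\rho,r\phi}(a)=2\pi\langle \overline{m}_{\rho,r\phi}(a),u_\rho\rangle+2\pi C_\rho$. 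Comparing with Proposition \ref{PropWrappingAverageFormula} forces $C_\rho=0$.

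There is no real obstacle here: the proof is essentially a bookkeeping argument reconciling two different ways of computing the same integral. The only delicate point is making sure that the parametrization of the circle lines up so that $\lambda_{u_\rho}^*\theta_{r\phi}$ on the tangent vector $\partial/\partial\alpha$ really equals $\theta_{r\phi}(X_{u_\rho})$ pulled back; this uses the homomorphism property of $\lambda_{u_\rho}$ in an essential way. Everything else — the $(S^1)^n$-invariance of $\mu_{r\phi}$ and the identification with the lattice average — has been set up in earlier sections.
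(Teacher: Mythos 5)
Your proof is correct and follows essentially the same strategy as the paper: pull the distortion formula back along $\lambda_{u_\rho}$, integrate around a circle, and compare with the wrapping-averaging formula of Proposition \ref{PropWrappingAverageFormula}. The only difference is that the paper specializes to the unit circle $a=1$ (where $|\chi^m\circ\lambda_{u_\rho}|\equiv 1$ immediately gives $\mu_{r\phi}\circ\lambda_{u_\rho}=\overline{m}_{r\phi}$), whereas you integrate around a circle of arbitrary radius $a$ and use $(S^1)^n$-invariance to identify the integrand with $\langle\overline{m}_{\rho,r\phi}(a),u_\rho\rangle$; either choice closes the argument.
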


\begin{proof}
Recall the distortion formula in Lemma \ref{LemmaDistortionFormula}:
$\theta_{r\phi}(X_{u_\rho})=\langle \mu_{r\phi},u_\rho\rangle + C_\rho$.
Pulling back both sides along the one-parameter subgroup $\lambda_{u_\rho}:\CC^\times\to (\CC^\times)^n$
and integrating over the unit circle $S^1\subset\CC^\times$ one gets
$$w_\rho(1) = \int_{0}^{2\pi}\langle \mu_{r\phi}(\lambda_{u_\rho}(e^{i\alpha})),u_\rho\rangle d\alpha + 2\pi C_\rho \quad .$$
Since $\chi^m$ is a character one has $|\chi^m(\lambda_{u_\rho}(e^{i\alpha}))|=1$, and
by how the moment map is defined
$\langle \mu_{r\phi}(\lambda_{u_\rho}(e^{i\alpha})),u_\rho\rangle = \langle \overline{m}_{r\phi},u_\rho\rangle$.
The claim then follows from the fact that $w_\rho(1)=2\pi\langle \overline{m}_{r\phi},u_\rho\rangle$
by Proposition \ref{PropWrappingAverageFormula} and Definition \ref{DefAverageLatticePoint} of average lattice point.
\end{proof}

\begin{corollary}\label{CorExactTorus}
The Lagrangian torus $\mu_{r\phi}^{-1}(0)\subset (\CC^\times)^n$ is exact with respect
to $\theta_{r\phi}$.
\end{corollary}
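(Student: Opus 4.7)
The plan is to prove the stronger statement that $\theta_{r\phi}$ restricts to the zero one-form on $L := \mu_{r\phi}^{-1}(0)$, which is trivially exact (with primitive $0$). The two inputs are already in place: the distortion formula with the vanishing of the constant, and the description of the infinitesimal action.

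First, I would combine Lemmas \ref{LemmaDistortionFormula} and \ref{LemmaDistortionConstant} to obtain the clean identity $\theta_{r\phi}(X_{u_\rho}) = \langle \mu_{r\phi},u_\rho\rangle$ on all of $(\CC^\times)^n$, for every ray $\rho \in \Sigma(1)$. Restricting to $L$, where by definition $\mu_{r\phi}\equiv 0$, this gives $\theta_{r\phi}(X_{u_\rho})\rvert_L = 0$ for each $\rho \in \Sigma(1)$.

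Second, I would show that at every $z\in L$ the family $\{X_{u_\rho}(z): \rho\in\Sigma(1)\}$ spans the tangent space $T_zL$. The moment fiber $L$ is a single free $(S^1)^n$-orbit of dimension $n$, so $T_zL=\mathrm{span}_\RR(X_{e_1}(z),\ldots,X_{e_n}(z))$, and the map $v\mapsto X_v(z)$ from $\RR^n$ to $T_zL$ is a linear isomorphism. By Lemma \ref{LemmaInfinitesimalAction}, $X_{u_\rho}(z)=\sum_k u_\rho^{(k)}X_{e_k}(z)$, so the span of $\{X_{u_\rho}(z)\}_\rho$ corresponds under this isomorphism to the $\RR$-linear span of the primitive ray generators $\{u_\rho\}_{\rho\in\Sigma(1)}$ in $\RR^n$, which is all of $\RR^n$ because $\Sigma$ is complete (if they lay in a proper subspace, then $|\Sigma|\neq\RR^n$).

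Combining the two steps, $\theta_{r\phi}\rvert_L$ is a one-form on $L$ that annihilates a pointwise spanning family of tangent vectors, hence vanishes identically, so it is exact. The only subtlety worth flagging is that Lemma \ref{LemmaDistortionConstant} only provides $C_\rho = 0$ for the primitive ray generators $u_\rho$, not for an arbitrary generating set of $\ZZ^n$ such as $\{e_1,\ldots,e_n\}$; the completeness argument above is precisely what closes this gap without having to establish the vanishing of the distortion constant on the coordinate directions directly.
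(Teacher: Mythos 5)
Your proof is correct and takes essentially the same approach as the paper: combine Lemmas \ref{LemmaDistortionFormula} and \ref{LemmaDistortionConstant} to get $\theta_{r\phi}(X_{u_\rho}) = \langle \mu_{r\phi},u_\rho\rangle$, which vanishes on $\mu_{r\phi}^{-1}(0)$. The paper states the conclusion tersely; you have spelled out the implicit spanning step (that $\{X_{u_\rho}\}_{\rho\in\Sigma(1)}$ spans the fiber tangent space because $\Sigma$ is complete), which is a correct and worthwhile clarification.
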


\begin{proof}
Combining Lemma \ref{LemmaDistortionFormula} and Lemma \ref{LemmaDistortionConstant}, one
sees that $\theta_{r\phi}$ restricts to the zero one-form on $\mu_{r\phi}^{-1}(0)$.
\end{proof}

\subsection{Transversality of the Liouville vector field}

We prove below that positive level sets of polyhedral Hamiltonians as in Definition
\ref{DefPolyhedralHamiltonian} are hypersurfaces of contact type, as long as $\phi$
is negative on primitive generators of the rays of $\Sigma$.

\begin{theorem}\label{ThmContactType}
If $\phi(u_\rho)<0$ for all $\rho\in\Sigma(1)$, then $\theta_{r\phi}$ restricts to a contact form on the hypersurface
$\partial W_{\epsilon,\delta}(r\phi)=H_{\epsilon, r\phi}^{-1}(\delta)\subset (\CC^\times)^d$
for all $\delta\in (0,\infty)$ and $\epsilon\in \RR_{>0}^{\Sigma(1)}$ such that $\epsilon_\rho < -r\phi(u_\rho)$ for all $\rho\in\Sigma(1)$ .
\end{theorem}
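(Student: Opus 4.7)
The plan is to reduce the statement to the transversality criterion established in Lemma \ref{LemmaCriterionForTransversality}, which demands $\theta_{r\phi}(X_{u_\rho})|_{U^\epsilon_\rho} < 0$ for every $\rho \in \Sigma(1)$. The two preceding lemmas already do the heavy lifting: Lemma \ref{LemmaDistortionFormula} gives $\theta_{r\phi}(X_{u_\rho}) = \langle \mu_{r\phi}, u_\rho\rangle + C_\rho$ globally on $(\CC^\times)^n$, and Lemma \ref{LemmaDistortionConstant} identifies $C_\rho = 0$, yielding the clean identity
$$\theta_{r\phi}(X_{u_\rho}) = \langle \mu_{r\phi}, u_\rho\rangle .$$

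Next I would unpack the definition of the collar stratum $U^\epsilon_\rho$: for any $z \in U^\epsilon_\rho$ one has $0 < r_\rho(\mu_{r\phi}(z)) < \epsilon_\rho$, equivalently $r\phi(u_\rho) < \langle \mu_{r\phi}(z), u_\rho\rangle < \epsilon_\rho + r\phi(u_\rho)$. Under the hypothesis $\epsilon_\rho < -r\phi(u_\rho)$ (which uses the standing assumption $\phi(u_\rho) < 0$), the right endpoint is strictly negative, so $\theta_{r\phi}(X_{u_\rho})(z) < 0$ on $U^\epsilon_\rho$, as required. Lemma \ref{LemmaCriterionForTransversality} then yields $dH_{\epsilon, r\phi}(X_{\theta_{r\phi}}) > 0$ along $H^{-1}_{\epsilon, r\phi}(\delta)$, where $X_{\theta_{r\phi}}$ denotes the Liouville vector field $\omega_{r\phi}$-dual to $\theta_{r\phi}$.

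To conclude the contact condition from this transversality I would invoke the standard Cartan-type identity
$$\iota_{X_{\theta_{r\phi}}} \omega_{r\phi}^{n} = n\, \theta_{r\phi} \wedge (d\theta_{r\phi})^{n-1} .$$
Since $\omega_{r\phi}^n$ is a volume form on $(\CC^\times)^n$ and $X_{\theta_{r\phi}}$ is nowhere tangent to the hypersurface $H^{-1}_{\epsilon, r\phi}(\delta)$, the contracted form $\theta_{r\phi}\wedge (d\theta_{r\phi})^{n-1}$ restricts to a nowhere-vanishing $(2n-1)$-form there; this is precisely the contact condition for $\theta_{r\phi}|_{H^{-1}_{\epsilon, r\phi}(\delta)}$.

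There is essentially no remaining obstacle at this stage: all the nontrivial content has been absorbed into the wrapping–averaging formula (Proposition \ref{PropWrappingAverageFormula}) and the resulting identification $C_\rho = 0$. The role of the negativity hypothesis $\phi(u_\rho) < 0$ together with $\epsilon_\rho < -r\phi(u_\rho)$ is simply to guarantee that the window $(r\phi(u_\rho),\, \epsilon_\rho + r\phi(u_\rho))$ of values of $\langle \mu_{r\phi}, u_\rho\rangle$ realized on $U^\epsilon_\rho$ lies strictly inside the negative half-line, matching the sign required by the criterion.
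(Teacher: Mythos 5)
Your proposal is correct and follows the same route as the paper: reduce to the criterion of Lemma \ref{LemmaCriterionForTransversality}, invoke the distortion formula (Lemma \ref{LemmaDistortionFormula}) together with $C_\rho = 0$ (Lemma \ref{LemmaDistortionConstant}) to get $\theta_{r\phi}(X_{u_\rho}) = \langle \mu_{r\phi}, u_\rho\rangle$, and then use the defining inequalities of $U^\epsilon_\rho$ plus $\epsilon_\rho < -r\phi(u_\rho)$ to conclude negativity. The only difference is that you spell out the standard identity $\iota_{X_{\theta_{r\phi}}}\omega_{r\phi}^n = n\,\theta_{r\phi}\wedge(d\theta_{r\phi})^{n-1}$ to justify that outward transversality of the Liouville field implies the contact condition, a step the paper takes for granted when formulating Lemma \ref{LemmaCriterionForTransversality}.
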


\begin{proof}
Thanks to Lemma \ref{LemmaCriterionForTransversality}, it suffices to prove that
$\theta_{r\phi}(X_{u_\rho})_{|U^\epsilon_\rho}<0$ for all $\rho\in\Sigma(1)$. By the
distortion formula of Lemma \ref{LemmaDistortionFormula} and the calculation of the distortion
constant of Lemma \ref{LemmaDistortionConstant}, the latter condition is equivalent to
$$\langle \mu_{r\phi}(z),u_\rho\rangle < 0 \quad \textrm{for all} \quad z\in U_\rho^\epsilon \quad .$$
When $z\in U_\rho^\epsilon$ one
has $\langle \mu_{r\phi}(z),u_\rho\rangle < r\phi(u_\rho) +\epsilon_\rho$.
To achieve the desired inequality it suffices to choose
$\epsilon_\rho < -r\phi(u_\rho)$ for all $\rho\in\Sigma(1)$. This constraint can be met by
$\epsilon_\rho \in \RR_{>0}$ thanks to the assumption that $\phi(u_\rho)<0$ for all $\rho\in\Sigma(1)$.
\end{proof}

\bibliographystyle{abbrv}
\bibliography{biblio}

\end{document}